\documentclass[11pt]{amsart}

\usepackage{amsmath,amsfonts,amssymb,amsthm,epsfig,color}

\usepackage{enumitem}
\usepackage{esint}
\usepackage{color}

\usepackage{import}

\usepackage[final,allcolors=blue,colorlinks=true]{hyperref}
\usepackage[leqno]{amsmath}

\newtheorem{theorem}{Theorem}[section]
\newtheorem{lemma}[theorem]{Lemma}
\newtheorem{proposition}[theorem]{Proposition}

\theoremstyle{definition}
\newtheorem{definition}[theorem]{Definition}

\newtheorem{remark}[theorem]{Remark}
\numberwithin{equation}{section}

\newcommand{\R}{\mathbb{R}}

\newcommand{\N}{\mathbb{N}}
\newcommand{\medint}{-\kern  -,395cm\int}
\newcommand{\medintdue}{-\kern  -,31cm\int}

\newcommand{\eps}{\varepsilon}

\newcommand{\pa}{\partial}

\newcommand{\Ha}{{\mathcal{H}}}
\newcommand{\A}{\mathcal{A}}

\newcommand{\fh}{\mathfrak{h}}
\newcommand{\ft}{\mathfrak{t}}

\newcommand{\trace}{\operatorname{Tr}}

\renewcommand{\MR}[1]{\null}

\def\beq{\begin{equation}}
\def\eeq{\end{equation}}
\begin{document}

\title[Eventual regularity]{Eventual regularity of  the volume-preserving mean curvature flow in three and two dimensions}

\author{Vedansh Arya}
\address{Department of Mathematics and Statistics, Indian Institute of Technology Kanpur \\ 
India}\email[Vedansh Arya]{vedansh@iitk.ac.in}

\author{Seongmin Jeon}
\address{Department of Mathematics Education \\
Hanyang University \\ Republic of Korea}\email[Seongmin Jeon]{seongminjeon@hanyang.ac.kr}

\author{Vesa Julin}
\address{Department of Mathematics and Statistics\\
University of Jyväskylä\\ Finland}\email[Vesa Julin]{vesa.julin@jyu.fi}

\begin{abstract}
The recent work of Morini-Oronzio-Spadaro and the third author \cite{JuMoOrSpa} shows that, in three dimensions, a flat‑flow solution of the volume‑preserving mean curvature flow that converges to a single ball, which is the case for instance when the initial perimeter is smaller than that of two disjoint balls, converges  exponentially fast in  Hausdorff distance.  In this paper we strengthen this result by proving that  after a finite time the flow becomes smooth, satisfies the equation in the classical sense and converges exponentially fast to the limiting ball in every $C^k$‑norm.  In the  proof we develop a version of Brakke’s $\eps$-regularity theorem adapted to our setting and derive the necessary nonlinear PDE estimates directly at the level of the discrete minimizing-movements scheme.  The same result holds in the planar case.
\end{abstract}
\maketitle
\tableofcontents
\section{Introduction }

We continue the study of the asymptotic behavior of the volume-preserving mean curvature flow of sets $\{E_t\}_{t \geq 0}$, $E_t \subset \R^{n+1}$
\begin{equation}\label{eq:VMCF}
V_t = -\mathrm{H}_{E_t} + \mathrm{\bar H}_{E_t}  \quad\text{on }\pa E_t\subset\R^{n+1},
\end{equation}
where $V_t$ denotes the outer normal velocity, $\mathrm{H}_{E_t}$ is the mean curvature   and $\mathrm{\bar H}_{E_t} :=\fint_{\partial E_t} \mathrm{H}_{E_t}\, d\mathcal{H}^2$. If the flow does not develop singularities, one can show that it converges exponentially fast to a ball, or their union. This is the case for instance when the initial set is convex \cite{Hui} or close to  the ball \cite{ES}. Similar stability results hold also in the flat torus when the initial set is near a stable critical configuration \cite{DKK, Joonas}.  The situation becomes more challenging in the presence of singularities, in which case one has to  extend the notion of solution.  From physical point of view it is natural to require that  weak solution of \eqref{eq:VMCF} is given by family  of sets  of finite perimeter in $\R^{n+1}$  defined for all times.  For start-shaped sets Kim-Kwon \cite{KK} construct a weak global-in-time solution using the level-set approach and prove its convergence to a ball. We use the notion of  a volume‑preserving flat flow  $\{E(t)\}_{t \geq 0}$, originally introduced for mean curvature flow by Almgren-Taylor-Wang \cite{ATW} and Luckhaus-St\"urzenhecker \cite{LS}, and adapted to the volume-preserving setting by Mugnai-Seis-Spadaro \cite{MSS}. For technical convenience we use the variant construction from \cite{Vesa}; see Section \ref{sec:flatflow} for the precise definition.

An important  result on possible asymptotic limits of \eqref{eq:VMCF} is due to Delgadino-Maggi \cite{DM}, who showed that any set of finite perimeter  with constant distributional mean curvature is a (possibly tangent) union of balls. 
 Morini-Posiglione-Spadaro \cite{MoPoSpa} proved that the discrete approximation scheme for the (volume-preserving) flat flow converges exponentially fast to a union of balls in every dimension.  In a recent work by Bonforte-Maggi-Restrepo \cite{BMR}  the convergence  is proven in every dimension for the phase-field approximation of \eqref{eq:VMCF}. For the flat flow itself, Niinikoski and the third author established convergence (up to translations of components) to a union of balls in low dimensions $n\leq 2$  \cite{JN}. This is refined in \cite{JuMoOrSpa}, where it is shown that if the limiting balls remain at positive distances from each other,  in particular when there is a single limiting ball, then the convergence is exponential in the Hausdorff distance. Exponential Hausdorff convergence in the planar case appears in \cite{JuMoPoSpa}; related anisotropic case and  result in the flat torus  appear in \cite{KK2} and \cite{ADK, DK}, respectively. 

Our main result improves these results by upgrading the Hausdorff convergence to convergence in every $C^k$-norm. In particular, we prove that whenever the flow converges to a single ball, it becomes smooth after  finite time. Here is our main theorem.

\begin{theorem}\label{mainthm}
Let $\{E(t)\}_{t\geq 0}$ be a volume-preserving flat flow in $\R^{n+1}$, for $n\leq 2$, starting  from a bounded set of finite perimeter $E_0 \subset\R^{n+1}$ with volume $|E_0| = |B_r|$ and assume 
\[
\liminf_{t \to \infty} P(E(t)) <   2^{\frac{1}{n+1}} P(B_r). 
\]
Then there exists a time $T_0 >0$ such that the sets $E(t)$ are smooth for all $t \in (T_0,\infty)$, they solve  \eqref{eq:VMCF} in the classical sense  and converge exponentially fast to  $ B_r(x_0)$, for some $x_0 \in \R^{n+1}$,  in $C^k$ for every $k \in \N$, i.e., there are functions $u(\cdot, t) : \mathbb{S}^{n} \to \R$ such that for all $t >T_0$
\[
\partial E(t) = \{ \big(r+u(x,t)\big)x + x_0 : x \in \mathbb{S}^{n}  \} \quad \text{and} \quad \|u(\cdot,t)\|_{C^k( \mathbb{S}^{n})} \leq C_k e^{-c_k t}. 
\]
\end{theorem}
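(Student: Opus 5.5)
The starting point is the result of \cite{JuMoOrSpa} (and \cite{JuMoPoSpa} in the plane). Under the perimeter assumption, the flat flow converges to a single ball $B_r(x_0)$ exponentially fast in Hausdorff distance, $\sup_{x\in\partial E(t)}\big||x-x_0|-r\big|\le Ce^{-ct}$. Moreover, as a byproduct of the dissipation inequality, the discrete velocities satisfy square-integrability bounds that decay exponentially on time intervals $[t,t+1]$. The plan is to upgrade this to smoothness in three steps.

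\textbf{Step 1: uniform regularity along the discrete scheme.} Work with the approximating minimizing movements $\{E^h_k\}$ of Section~\ref{sec:flatflow}. Each $E^h_k$ minimizes the perimeter plus the penalization $\frac1h\int_{E\Delta E^h_{k-1}}\mathrm{sd}_{E^h_{k-1}}$ and the volume constraint/penalization. Its Euler--Lagrange equation reads
\[
\frac{\mathrm{sd}_{E^h_{k-1}}}{h}=-\mathrm{H}_{E^h_k}+\lambda^h_k \quad\text{on }\partial E^h_k.
\]
For $t$ large and $h$ small, $E^h_{k-1}$ lies in an $e^{-ct}$-neighbourhood of $\partial B_r(x_0)$. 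We need bounds on $\mathrm{H}$ that do not blow up as $h\to0$. For this I would prove a parabolic $\eps$-regularity statement of Brakke type adapted to the discrete scheme. Concretely, if in a parabolic cylinder $Q_\rho(x,t)$ the discrete boundaries are $\eps$-close in Hausdorff distance to a plane (guaranteed by Hausdorff closeness to a large sphere at small scale), and the discrete Gaussian density ratios are close to $1$, then the boundaries are graphs with uniform $C^{1,\alpha}$ bounds in the interior cylinder. The density ratios are controlled by a discrete monotonicity formula, or more simply by perimeter closeness to $P(B_r)$ together with Hausdorff closeness. I expect this step to be the main obstacle. Brakke's theorem uses the continuous monotonicity formula, whereas here one only has the scheme. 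I would first try an excess-decay/harmonic-approximation argument on graphs $u_k$ over the plane. These solve a discrete nonlinear parabolic equation
\[
\frac{u_k-u_{k-1}}{h}\approx \mathrm{div}\Big(\frac{\nabla u_k}{\sqrt{1+|\nabla u_k|^2}}\Big)+\lambda_k,
\]
where the error comes from replacing the signed distance by the vertical difference, which is quadratic in the gradients. The scheme also gives an $L^2$ tilt-excess decay iteration, using a Caccioppoli inequality for the implicit Euler step.

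\textbf{Step 2: higher regularity, uniform in $h$.} Once $\partial E^h_k$ are uniformly Lipschitz graphs over $\partial B_r(x_0)$, write them as radial graphs $u^h_k$ on $\mathbb S^n$. Derive discrete Schauder estimates for the implicit Euler scheme of the quasilinear equation. This means difference-quotient estimates in time, plus De Giorgi/Schauder estimates in space for each elliptic step, with constants independent of $h$. Together with a bound on the Lagrange multipliers $\lambda^h_k$ (from Lipschitz graphs and the $L^2$ velocity bound), this yields $C^{k,\alpha}$ bounds for all $k$. Letting $h\to0$, the flat flow inherits these bounds, is smooth for $t>T_0$, and solves \eqref{eq:VMCF} classically.

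\textbf{Step 3: exponential $C^k$ decay.} For $t>T_0$, write $\partial E(t)$ as a radial graph $u(\cdot,t)$ with uniform $C^k$ bounds and $\|u(\cdot,t)\|_{C^0}\le Ce^{-ct}$. Interpolating between these (Gagliardo--Nirenberg) gives $\|u\|_{C^k}\le C_k e^{-c_kt}$. Alternatively, one can invoke the classical stability result of \cite{ES} once $u$ is $C^{1,\alpha}$ small, which also gives a possibly adjusted centre. The centre $x_0$ is already fixed by the Hausdorff convergence.
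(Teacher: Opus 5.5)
\textbf{Main gap: multiplicity control at all scales.} The gap is in Step~1, at the point you yourself flag as the main obstacle. To get bounds independent of $h$, an $\eps$-regularity or excess-decay argument has to be iterated from a fixed scale down to scale $\sim\sqrt h$, and $\sqrt h\to0$ at fixed $t$. So you need $\pa E^h(t)$ to have density bounded away from $2$ at \emph{every} scale. In Brakke's and White's theorems this comes from Huisken's monotonicity formula, which propagates near-unit density from one scale to all smaller ones. No such formula is available here: the flow is nonlocal, it is not known to be a Brakke flow, and no discrete monotonicity formula is known for the scheme.

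Your fallback (perimeter closeness plus Hausdorff closeness) is not scale invariant. Remove from $B_r(x_0)$ a thin cavity of diameter $\rho$, parallel to the sphere and lying in the $Ce^{-ct}$-neighbourhood of $\pa B_r(x_0)$, and make a negligible volume correction. The resulting set is still Hausdorff close to the ball, and its perimeter deficit is of order $\rho^n$, hence $\le Ce^{-ct}$ once $\rho\lesssim e^{-ct/n}$. Yet its boundary has at least two sheets at all scales between the cavity's thickness and $\rho$. For the same reason, your graph-based tilt-excess/Caccioppoli iteration presupposes the single-sheetedness that is in question.

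The missing idea is the paper's substitute for monotonicity, in three parts. First, by Proposition~\ref{prop:JuMoOrSpa} and Lemma~\ref{prop:JuMoOrSpa2}, the Willmore energy is below $5\pi$ outside a set of times of measure $Ce^{-cT}$. Second, at such times the quantitative Li--Yau inequality (Proposition~\ref{prop:kuwert-schaztle}, based on Simon's fixed-time monotonicity) gives $\Ha^2(\pa E\cap B_\rho)\le(2\pi-\delta/6)\rho^2$ at every small scale where $\pa E$ is flat. Third, a maximal-function selection (Lemma~\ref{lem:lebesgue}) produces times $t_0\notin\Sigma_{T_0}$ such that good times fill a fraction $\ge 1-\eps_0$ of every backward interval $[t_0-r^2,t_0]$; this is \eqref{eq:density-point}. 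It enters through Lemma~\ref{lem:key-lemma} into a Savin--Wang type ABP/weak Harnack argument for the sub- and supergraphs $u_\pm$, which does not assume graphicality. In the plane, the corresponding input is $C^1$-closeness to a circle at good times.

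\textbf{Two further steps fail as stated.} First, the Lagrange multiplier. $\lambda^h$ is bounded only in $L^2_{\mathrm{loc}}$ in time and pointwise only by $C/\sqrt h$ (Proposition~\ref{prop:apriori-est}). Neither Lipschitz graphicality nor the dissipation inequality yields an $h$-independent pointwise bound. Since $\Lambda$ is merely $\tfrac12$-H\"older, keeping $\lambda_k$ as a forcing term rules out even $C^{1,\alpha}$ estimates under parabolic scaling. The paper cancels $\lambda^h$ by measuring the excess relative to $P_{A,c}+\Lambda(t)$, with $\Lambda$ as in \eqref{def:Lambda}. It absorbs the residual error, of size $|\lambda|\,O(|\nabla u|^2)$, through the correction term in \eqref{def:w-r}.

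Second, the density input holds only at good times, so you cannot expect uniform Lipschitz graphs at all times, as your Step~2 assumes. The paper proves a uniform $C^2$ bound only at $t_0\notin\Sigma_{T_0}$: flatness decay down to scale $C_0\sqrt h$, then Allard and Schauder estimates at scale $\sqrt h$ (Proposition~\ref{lem:final-reg}). It then propagates this bound to all $t\ge T_0+1$ and upgrades it to every $C^k$ using the $C^2$ well-posedness and smoothing result of \cite{JN2} (Proposition~\ref{prop:JN2}). This works because $|\Sigma_{T_0}|$ is small compared with the existence time provided there. This step replaces your $h$-uniform discrete Schauder theory, which would itself be a substantial unproven ingredient. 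Your Step~3 (interpolation with the exponential $C^0$ decay) matches the paper.
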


The condition on the perimeter  is needed to exclude bubbling, i.e., that the flow converges to a union of balls which may be tangent to each other. Although it sounds feasible that the flow generically converges to a one single ball, as it is the only stable critical point \cite{BDC}, we cannot  exclude the possibility of bubbling as the example in \cite{FJM} shows. We may, however, provide similar result as Theorem \ref{mainthm} in the case when the limiting set is a union of balls which have positive distance to each other. We state this as a remark as it follows from the same argument as Theorem \ref{mainthm}. 

\begin{remark}
\label{rem:bubbling}
Let $\{E(t)\}_{t\geq 0}$ be a volume-preserving flat flow in $\R^{n+1}$, $n \leq 2$, starting  from a bounded set of finite perimeter $E_0 \subset\R^{n+1}$ with volume $|E_0| = |B_r|$ which converges exponentially fast in $L^1$-sense to  $F = \bigcup_{i =1}^N B_\rho(x_i)$, for some $x_1,\dots, x_N\in \R^{n+1}$ with  $|x_i -x_j| >2\rho + \delta_0$ for $i \neq j$.  Then there exists time $T_0 >0$ such that the sets $E(t)$ are smooth for all $t \in (T_0,\infty)$,  they solve  \eqref{eq:VMCF} in the classical sense  and converge exponentially fast to  $F$ in $C^k$ for every $k \in \N$.
\end{remark}
 
We remark that in spite of the title \emph{eventual regularity},   Theorem \ref{mainthm}   differs from  the result by Cameron \cite{Cam} for the nonlocal mean curvature flow for graphs, since the latter holds for very rough initial sets. As the author points out, the result in \cite{Cam} does not hold for the mean curvature flow. We also mention the result by Otto-Schubert-Westdickenberg \cite{OSW} where the authors study the asymptotic behavior of the Mullins-Sekerka flow for graphs.

\subsection{Outline of the proof}
We begin by recalling Brakke’s $\eps$‑regularity theory for mean curvature flow \cite{Bra, KT,  Ton2, Ton}, which  roughly states that a Brakke flow that is trapped between two hyperplanes close to each other and has multiplicity close to one is locally regular. 
There is an elegant  recent proof for this   by DePhilippis-Gasparetto-Schulze \cite{DGS} and  Gasparetto \cite{Ga}   (the latter for varifolds with boundary), which is based on regularity theory for fully nonlinear PDEs  first introduced by Savin \cite{Sa} in the elliptic case,  and then adapted to the parabolic case  by Wang \cite{Wa}. If an analogous statement were available for our volume‑preserving flat flow, Theorem \ref{mainthm} would follow relatively directly from the a priori estimates in \cite{JuMoOrSpa}.

There are, however, three obstructions to applying this argument. First, we are not aware of any result which implies that a volume-preserving flat flow is a Brakke solution of the equation \eqref{eq:VMCF}. Neither do we know that the flat flow is a distributional solution of  \eqref{eq:VMCF}:  the result in \cite{MSS} is conditional because strong BV‑convergence of the minimizing‑movement scheme is not available in general (it is known only under mean‑convexity \cite{DPL}).  Second,  because we do not have the notion of Brakke solution, and since the equation \eqref{eq:VMCF} is nonlocal, we do not have Huisken's monotonicity formula  \cite{Hui2, MantegazzaBook}, which is crucial in the proof of Brakke regularity in \cite{DGS, Ga, KT,  Ton2}. The third issue is  the nonlocality  of the flow  caused by the Lagrange multipliers $\lambda(t) := \mathrm{\bar H}_{E_t} $ in \eqref{eq:VMCF}. The a priori estimates (see Proposition \ref{prop:apriori-est}) imply  that $\lambda(t)$ is only locally $L^2$-integrable in time which is  insufficient for standard parabolic regularity arguments as in \cite{KT}.

Starting from the Hausdorff convergence to a ball provided by \cite{JuMoOrSpa}, we first remove the obstruction posed by the Lagrange multiplier by adding a suitable artificial vertical motion to the flow that essentially cancels $\lambda(t)$. While conceptually simple, this manipulation introduces several technical complications. The more serious issue, the absence of a Brakke formulation for the flat flow, leads us to a different strategy: instead of passing to a varifold limit, we prove the required nonlinear PDE estimates directly for the discrete minimizing‑movement approximations, with constants independent of the time step. This direct, discretized PDE analysis is the most technically demanding part of the paper and may be of independent interest; accordingly, we present these estimates in full generality in  $\R^{n+1}$.

Finally, we overcome the lack of monotonicity formula in $\R^3$ by using again the  a priori estimates from \cite{JuMoOrSpa}, which show that the Willmore energy is asymptotically close to $4\pi$ for most times. By the classical Li–Yau inequality \cite{LiYa82}, an immersed surface in $\R^3$ with Willmore energy below $8\pi$ is embedded.  Combining our estimate with a quantitative version of the Li–Yau inequality (Proposition \ref{prop:kuwert-schaztle})  yields the following: by choosing a time $T_0$ at which the set
\[
\{ t >0 : \frac14  \|\mathrm{H}_{E(t)}\|_{L^2}^2 \leq 8 \pi - \delta_0  \}  
\]
has density close to one in a quantitative sense, we obtain a uniform bound away from multiplicity two at every spatial scale. We remark that this argument is purely three‑dimensional and does not extend to higher dimensions. The information above suffices to establish our version of flatness decay (Theorem \ref{thm:decay-flatness}). We treat the planar case separately, where the argument is simpler.

Theorem \ref{thm:decay-flatness}  provides spatial regularity of the flow at time $T_0$. We then use  the well‑posedness of the Cauchy problem for $C^2$-initial sets from \cite{JN2} to propagate smoothness to a  time interval $(T_0,T_0+\delta)$. Iterating this procedure gives full regularity for $(T_0,\infty)$, and exponential convergence in all $C^k$-norms follows by standard interpolation inequality.

\section{Set-up and statement of the flatness decay}

We  begin by introducing the relevant notation. 
We denote the open ball with radius $r$ centered at $x$ by $B_r(x)$ and by $B_r$ if it is centered at the origin. The closed ball is denoted by $\bar B_r(x)$.   If we need to emphasize the dimension of the ball we denote  $B_r^{n}(x) \subset \R^n$. The unit sphere is the boundary of the unit ball $\mathbb{S}^{n} = \partial B_1^{n+1}$. 
Given a unit vector $\omega \in \mathbb{S}^{n}$, we denote  the projection to its orthogonal plane as 
\begin{equation}
\label{def:projection}
\pi_\omega (x) = x - (x \cdot  \omega) \, \omega \quad \text{and} \quad \Pi_\omega = \{ x \in \R^{n+1} : x \cdot \omega = 0 \}.   
\end{equation}
Above $x\cdot y$ denotes the inner product between $x$ and $y$.  We often associate  $\Pi_\omega$ with $\R^n$ if there is no danger for confusion,    and therefore   $\pi_\omega (x)$  is often associated with  a vector in $\R^{n}$. For  $\omega \in \mathbb{S}^{n} $  we denote the cylinder in space tilted in direction $\omega$ as 
\begin{equation}
\label{def:cylinder-space}
C_{\rho,r}(\omega) = \{ x \in \R^{n+1}: |\pi_\omega (x)| <\rho, \,\, |x \cdot \omega| < r \}  
\end{equation}
and $C_{\rho,r}(x_0, \omega)= C_{\rho,r}(\omega) + x_0$. 
We denote the space-time cylinders by $Q_r = B_r \times (-r^2,r^2)$,  $Q_r^- = B_r \times (-r^2,0]$,  $Q_r^+ = B_r \times [0,r^2)$ and $\bar Q_r^- = \bar B_r \times [-r^2,0]$, while the  cylinders centered at $(x_0,t_0)$ are $Q_r(x_0,t_0) = Q_r + (x_0,t_0)$ and $Q_r^\pm(x_0,t_0) = Q_r^\pm + (x_0,t_0)$. If the size and the position of the cube is not relevant we denote $Q = Q_r(x_0,t_0)$ and $Q^\pm= Q_r^\pm(x_0,t_0)$. 

For a given set $E \subset \R^{n+1}$ we denote the distance function by $\text{dist}(\cdot, E)$ and the signed distance function by 
\begin{equation}
\label{def:sign-dist}
d_E(x) =  \text{dist}(x,E) - \text{dist}(x,\R^{n+1} \setminus E). 
\end{equation}
If $E$ is measurable we denote by $|E|$ its Lebesgue measure, $P(E)$ its perimeter, and $P(E;U)$ the perimeter relative to an open set $U$. We will mostly deal with regular sets and therefore it is enough to recall that for Lipschitz domains 
it holds $P(E) = \Ha^{n}(\partial E)$ and $P(E; U) = \Ha^{n}(\partial E \cap U)$.  If the set $E$ is $C^2$-regular,  the mean curvature $\mathrm{H}_E$  is the sum of the principal curvatures, while $B_E$ denotes the second fundamental form. We use the orientation given by the outer unit  normal which means that $\mathrm{H}_E$ is non-negative for convex sets. In the planar case, we also use the notation $\kappa_E = \mathrm{H}_E$. Finally we denote the Laplace-Beltrami operator as $\Delta_{\pa E}$ and the tangential gradient as $\nabla_{\pa E}$, which for functions $f:\R^{n+1} \to \R$ can be written as $\nabla_{\pa E} f = \big(I - \nu_E \otimes \nu_E\big) \nabla f$ on $\pa E$. 

For  a function $u : \partial E \to \R$ we define the $L^2$-norm $\|u\|_{L^2(\pa E)}$ as usual, but use the notation $\|u\|_{C^0(\pa E)}= \sup_{x \in \pa E}|u(x)|$ for the sup-norm. For a function $u: U \subset \R^n \to \R$ and $k \in \N$ we denote  the $C^k$-norm as $\|u\|_{C^k(U)}$ and the $C^{k+\alpha}$ norm for  $\alpha \in (0,1)$ by $\|u\|_{C^{k+\alpha}(U)}$, which are defined as usual  \cite{CC}.

\subsection{The flat flow solution} \label{sec:flatflow}

We begin by recalling the definition of the flat flow. This is first given in \cite{MSS}, following the associated scheme proposed in \cite{ATW, LS}, but here we use the variant from \cite{Vesa} since it simplifies the forthcoming analysis. We refer to  \cite{Vesa, MoPoSpa, MSS} for a more detailed introduction.     

We fix a small time step $h>0$, and  given a bounded set of finite perimeter $E$ with $|E| = v$,  we consider the minimization problem 
\beq
\label{def:min-prob}
\min  \Big{\{} P(F) + \frac{1}{h}\int_F  d_E \, dx : \,  |F| = v\Big{\}}
\eeq
 and note that the minimizer exists but might not be unique. Above $d_E$ is the signed distance function defined in \eqref{def:sign-dist}. 
We note that by a simple scaling argument we may reduce to the case $v = |B_1|$ by changing the value of $h$.

Let $E_0 \subset \R^{n+1}$  be a bounded set of finite perimeter with $|E_0|= v$ and which coincides with its Lebesgue representative. 
We construct discrete-in-time evolution $\{E_k^{h}\}_{k}$, $k =0,1, \dots$ recursively  such  that $E_0^h=E_0$ and, assuming that $E_{k-1}^{h}$ is defined, we set $E_{k}^{h}$ to be a minimizer of \eqref{def:min-prob} with $E= E_{k-1}^{h}$. Notice that by the standard regularity theory $\partial E_k^{h}$ is $C^{2+\alpha}$-regular for all $\alpha\in (0,1)$ outside a small singular set \cite{MSS, Vesa} and therefore in \eqref{def:min-prob} we use this exact representative in order to compute $d_{E_{k-1}^{h}}$. We define the {\em approximate volume-preserving flat flow}  $\{E^{h}(t)\}_{t \geq 0}$ by setting $t_k = kh$ and
\begin{equation}
 \label{def:app-flat-flow} 
E^{h}(t) = E_k^{h} \qquad \text{for }\, t \in [t_k,  t_{k+1}).
\end{equation}
In view that $\partial E^{h}(t)$ is $C^{2+\alpha}$-regular, it satisfies the Euler-Lagrange equation
 \beq
\label{eg:Euler-Lag}
\frac{d_{E^{h}(t -h) }}{h} = - \mathrm{H}_{E^{h}(t) } + \lambda^{h}(t)  \qquad \text{on }\, \partial E^{h}(t) ,
\eeq
in  classical sense outside the singular set, where $\lambda^h(t) =  \lambda^{h}(t_k)$ for $t \in [t_k,  t_{k+1})$ is the Lagrange multiplier due to the volume constraint. We now recall the definition of {\em flat flow}.
\begin{definition}\label{def:flat-flow}
A flat flow solution of \eqref{eq:VMCF}, or volume-preserving  flat flow,  is any  family of sets $\{E(t)\}_{t \geq 0}$ which is a cluster point of $\{E^{h}(t)\}_{t \geq 0}$ defined in \eqref{def:app-flat-flow}, i.e., 
\[
E^{h_n}(t) \to E(t) \quad \text{as } \, h_n \to 0 \quad \text{in } \, L^1 \quad \text{for almost every }\, t >0 .
\]
\end{definition}
By \cite[Theorem 1]{Vesa} there exists a flat flow starting from $E_0$.  We recall the following a priori estimates for the volume-preserving  flat flow, which can be found in \cite{MSS, Vesa} and in  \cite[Propositions 4.1 and 4.2]{JN}. 

\begin{proposition}
\label{prop:apriori-est}
Let  $\{E^{h}(t)\}_{t \geq 0}$ be  an approximate volume-preserving flat flow starting from a bounded set of finite perimeter $E_0 \subset \R^{n+1}$ with $|E_0| \geq c_0 >0$ and $P(E_0) \leq C_0$. There is a constant $C$, which depends on $c_0,C_0$ and the dimension such that the following hold whenever $h$ is small enough. 
\begin{itemize}
\item[(i)] \,For every $t>h$ and $x \in \partial E^{h}(t)$ it holds $|d_{\partial E^{h}(t -h)}(x)| \leq C \sqrt{h}$ and the mean curvature and the Lagrange multiplier in the Euler-Lagrange equation \eqref{eg:Euler-Lag} satisfy
\[
\|\mathrm{H}_{E^{h}(t) }\|_{C^0} + |\lambda^{h}(t)| \leq \frac{C}{\sqrt{h}}. 
\] 
 
\item[(ii)] \,(Dissipation inequality)\,    For every   $T_2 \geq T_1 +h > 2h$ it holds
\[
\int_{T_1+h}^{T_2} \|\mathrm{H}_{E^{h}(t) } - \lambda^{h}(t)\|_{L^2}^2 \, dt \leq C \big( P(E^{h}(T_1)) -P( E^{h}(T_2))\big) .
\]

\item[(iii)] \, The Lagrange multipliers satisfy, for every $T_2>T_1>h$,  
\[
 \int_{T_1}^{T_2}  \lambda^{h}(t)^2 \, dt \leq C(T_2-T_1 +1). 
\]
As a consequence it holds  $ \int_{s}^{t}  |\lambda^{h}(\tau)| \, d\tau \leq C\sqrt{t-s}$ for all $0 <s <t<1$.  

\end{itemize}  
\end{proposition}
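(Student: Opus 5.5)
The plan is to derive all three items from the minimality of $E_k^h$ in \eqref{def:min-prob}, comparing against suitable competitors, and then to use the Euler-Lagrange equation \eqref{eg:Euler-Lag}. We first recall that comparing $E_k^h$ with $F=E_{k-1}^h$ gives
\[
P(E_k^h)+\frac1h\int_{E_k^h\Delta E_{k-1}^h}|d_{E_{k-1}^h}|\,dx\le P(E_{k-1}^h).
\]
Hence the perimeter is nonincreasing, $P(E^h(t))\le C_0$, and the sets stay uniformly bounded in a region depending on $c_0,C_0$ over bounded time intervals.

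For (i) we follow the Luckhaus-Sturzenhecker/MSS density argument. The minimizer is a volume-constrained $(\Lambda,r_0)$-quasiminimizer of perimeter at scales $r\lesssim\sqrt h$. To handle the volume constraint we compensate by a small dilation far away, or use a penalized problem, with the constant controlled by $c_0$. This gives uniform lower density bounds for $E_k^h$ and its complement at scale $\sqrt h$. Suppose $x\in\pa E_k^h$ had $d_{E_{k-1}^h}(x)=\gamma\sqrt h$ with $\gamma$ large. Removing $B_{\sqrt h}(x)\cap E_k^h$, and restoring the volume elsewhere at negligible cost, saves at least $\frac1h\gamma\sqrt h\,c\,h^{(n+1)/2}$. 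This exceeds the perimeter increase $C h^{n/2}$ for $\gamma$ large, a contradiction. The bound on $\lambda^h$ follows by testing \eqref{eg:Euler-Lag} against a localized variation supported near a regular point, where the density estimates give $|\mathrm H|\lesssim h^{-1/2}$ via the height bound. Equivalently, integrate \eqref{eg:Euler-Lag} against $x\cdot\nu$ and use $|d|/h\le C/\sqrt h$. This yields $|\lambda^h|\le C/\sqrt h$, and then $\|\mathrm H\|_{C^0}\le C/\sqrt h$ follows from \eqref{eg:Euler-Lag} directly.

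For (ii), by \eqref{eg:Euler-Lag} the quantity $\|\mathrm H-\lambda\|_{L^2}^2$ at step $k$ equals $h^{-2}\int_{\pa E_k^h}d_{E_{k-1}^h}^2$. It remains to bound $\frac1h\int_{\pa E_k^h}d^2\,d\Ha^n$ by $C\frac1h\int_{E_k^h\Delta E_{k-1}^h}|d|$ plus telescoping terms. We do this with the standard estimate from \cite{MSS}: by (i) and the density estimates, the boundary lies in a $C\sqrt h$-neighbourhood, and a coarea/Fubini argument along normals gives $\int_{\pa E_k}d^2\le C\int_{E_k\Delta E_{k-1}}|d|$. The constant requires the uniform density bounds. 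Summing over $k$ and multiplying by $h$ gives the dissipation inequality.

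For (iii), we test \eqref{eg:Euler-Lag} with $X(x)=x-x_k$ for a suitable center, i.e. we use the first variation of perimeter under dilation:
\[
\lambda\,(n+1)|E_k|=\int_{\pa E_k}\mathrm H\, x\cdot\nu+\ldots=nP(E_k)+\int_{\pa E_k}(\lambda-\mathrm H)\,x\cdot\nu .
\]
This gives $|\lambda|\le C(1+\|\mathrm H-\lambda\|_{L^2})$, since the volume is $\ge c_0$ and the diameter is controlled locally. Squaring, integrating in time and using (ii) yields $\int_{T_1}^{T_2}\lambda^2\le C(T_2-T_1+1)$. Cauchy-Schwarz then gives the $L^1$ bound.

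The main obstacle is that $\int x\cdot\nu$ requires a diameter bound. The set may be disconnected and spread out, so we would localize the dilation argument to components via the density estimates, which we expect to be the hardest step.
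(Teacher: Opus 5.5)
Note first that the paper does not prove this proposition; it quotes it from \cite{MSS, Vesa, JN}. Your plan therefore has to stand on its own. It follows the usual Luckhaus--Sturzenhecker route, and your (ii) is fine once (i) and the density estimates are in hand. Strictly, $\int_{\pa E_k}d^2\le C\int_{E_k\Delta E_{k-1}}|d|$ comes from a Vitali covering by balls $B_{|d(x)|/2}(x)$, $x\in\pa E_k$, using both density bounds, not from a Fubini argument along normals.

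The genuine gap is the Lagrange multiplier, above all (iii). Write $E_k=E_k^h$ and $\lambda_k=\lambda^h(t_k)$. Testing with $X=x-x_k$ only gives $|\lambda_k|\le C\big(1+\operatorname{diam}(E_k)\,\|\mathrm H_{E_k}-\lambda_k\|_{L^2}\big)$. So you need a diameter bound independent of $h$, and also independent of time, because the constant in (iii) may not depend on $T_1$. Nothing in your plan supplies this:
\begin{itemize}
\item The boundedness claim in your first paragraph is unproved. The height bound allows a drift of $C\sqrt h$ per step, i.e.\ $CT/\sqrt h$ over time $T$.
\item Localizing to components fails for $n\ge 2$. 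Density estimates at scale $\sqrt h$ only give $\operatorname{diam}(K)\lesssim P(E_k)\,h^{(1-n)/2}$ for a component $K$ (place disjoint $\sqrt h$-balls centred on $\pa K$ along it). A component carrying a definite volume may therefore have long thin parts where $|x\cdot\nu|$ is uncontrolled.
\item The same test is your ``equivalent'' route to $|\lambda^h|\lesssim h^{-1/2}$ in (i). Your other route there is not an argument as written: density estimates do not bound $\mathrm H$, which is controlled only through \eqref{eg:Euler-Lag}, i.e.\ after $\lambda$ is.
\end{itemize}

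The missing idea is to test with a bounded field whose divergence sits where $E_k$ has mass. Take $X_z(y)=y-z$ for $|y-z|\le 1$ and $X_z(y)=(y-z)|y-z|^{-n-1}$ for $|y-z|>1$. It is Lipschitz, with $|X_z|\le 1$, $|\nabla X_z|\le C(n)$, and $\operatorname{div}X_z=(n+1)\chi_{B_1(z)}$, because $y|y|^{-n-1}$ is divergence free in $\R^{n+1}\setminus\{0\}$. Apply the relative isoperimetric inequality on a grid of cubes of side $(n+1)^{-1/2}$: if every cube had $|E_k\cap Q|\le\delta$, then $|E_k|\le C\delta^{1/(n+1)}P(E_k)$. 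Hence $|E_k|\ge c_0$ and $P(E_k)\le C_0$ give a $z$ with $|E_k\cap B_1(z)|\ge c(n,c_0,C_0)$. The first variation formula and \eqref{eg:Euler-Lag} then give
\[
(n+1)\lambda_k|E_k\cap B_1(z)|=\int_{\pa E_k}\operatorname{div}_{\pa E_k}X_z\,d\Ha^n+\int_{\pa E_k}(\lambda_k-\mathrm H_{E_k})\,X_z\cdot\nu_{E_k}\,d\Ha^n .
\]
Hence $|\lambda_k|\le C(1+\|\mathrm H_{E_k}-\lambda_k\|_{L^2})$ with $C=C(n,c_0,C_0)$, and no diameter appears. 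Combined with $|d|\le C\sqrt h$ this gives the multiplier and curvature bounds in (i). Combined with (ii) on $[T_1+h,T_2]$ and with (i) on $[T_1,T_1+h]$, it gives (iii) uniformly in time.

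A second, smaller gap: the height bound itself is not yet justified for the hard constraint. ``Restoring the volume elsewhere at negligible cost'' must be an $h$-uniform volume-fixing estimate: cost $\lesssim h^{n/2}$ for a volume $\lesssim h^{(n+1)/2}$. Your lower density estimates need the same estimate. A dilation, whether global or localized away from $x$, changes $\frac1h\int d_{E_{k-1}}$ of the competitor by a boundary integral of $d_{E_{k-1}}X\cdot\nu$ over the moved part of its boundary. That integral involves the diameter and the very $\sup|d_{E_{k-1}}|$ you are estimating. Replacing the constraint by a penalization changes the scheme unless you show the minimizers coincide. You must organize this step so that it is not circular with the multiplier bound.
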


An important consequence of Proposition \ref{prop:apriori-est} (i) is that by dilatating with  factor  $\frac1{\sqrt{h}}$, the sets have bounded mean curvature. This means that we may use elliptic estimates when we are in a scale smaller than $\sqrt{h}$.

\subsection{Flatness decay}

As we explained in the introduction, our aim is to prove regularity  only in space at certain fixed times. We are going to prove directly $C^2$-regularity in space in order to use the regularity result for the Cauchy problem for $C^2$-initial sets from \cite{JN2}.
Therefore we need to prove the flatness decay directly  for second order Taylor approximation. We will prove many of the result in the general case $\R^{n+1}$ and therefore, even if Theorem \ref{mainthm} holds only in low dimensions, we will keep the notation for general dimension whenever we can. To this aim, given a  symmetric matrix $A \in S^{n}$ and $c \in \R$, we define the associated 
caloric polynomial of second order as
\begin{equation}
\label{def:caloric}
P(x,t) = P_{A,c}(x,t) := \frac12  Ax \cdot x  + b t + c, \qquad \text{where } \, b = \text{Tr}(A).  
\end{equation}
It is crucial that  in our definition the caloric polynomial has no spatial linear term. This simplifies the forthcoming analysis as the volume-preserving  mean curvature flow then linearizes to the heat equation with a forcing term. 

Because of the Lagrange multipliers, for a fixed $t_0 \in (0,\infty)$, it is natural to  define function  $\Lambda(\cdot):[0,\infty) \to \R$,   
\begin{equation}
\label{def:Lambda}
\Lambda(t) := \int_{t_0}^t \lambda^h(\tau+h) \, d \tau,  
\end{equation}
with the understanding that for $t<t_0$, $\int_{t_0}^t \lambda^h(\tau+h) \, d \tau=-\int_t^{t_0} \lambda^h(\tau+h) \, d \tau$. The crucial  technical properties are $\Lambda(t_0)= 0$ and $\Lambda(t_k) - \Lambda(t_{k-1}) = \lambda^h(t_k)h$ and we use these throughout the paper often  without further mention.  The issue with the Lagrange multipliers is that due to the a priori estimate from Proposition \ref{prop:apriori-est} (iii),   $\Lambda(\cdot)$ is merely $\frac12$-H\"older continuous, which is not enough to prove even $C^{1+\alpha}$  bounds, since by the standard parabolic scaling, this would imply $C^{\frac{1+\alpha}{2}}$-regularity in time. Therefore we need to cancel the movement created by the Lagrange multipliers. 


In order to measure the flatness in the scale of $C^{2+\alpha}$-regularity  in space, we first fix the center $x_0 \in \R^{n+1}$ and  a direction  $ \omega \in   \mathbb{S}^{n} $, and then  consider a caloric polynomial $P_{A,c}(\cdot, t) : \Pi_\omega \to \R$ defined in \eqref{def:caloric} where the linear space $\Pi_\omega $ is defined in \eqref{def:projection}. We  define  the subgraph of  $P_{A,c}$ shifted by $\Lambda(t)$, defined in  \eqref{def:Lambda}, as 
  \begin{equation}
\label{def:set-caloric}
\textbf{P}_t := \big{\{} x \in \R^{n+1} : (x-x_0) \cdot \omega  < P_{A,c}\big(\pi_\omega (x-x_0), t \big)  + \Lambda(t) \big{\}}. 
\end{equation}
We define the second order  Taylor approximation, or excess,  at time $t$ as  
\begin{equation} \label{def:excess}
\begin{split}
\mathcal{E}_{A,\omega,c}&\big(E^h(t); r, r_1 \big) \\
&:= \sup_{ x \in (E^h(t)   \Delta \textbf{P}_t)  \cap C_{r,r_1}(x_0, \omega)}  \big| (x-x_0) \cdot \omega  - P_{A,c}\big(\pi_\omega (x-x_0), t\big) -\Lambda(t)   \big| , 
\end{split}
\end{equation}
where $\textbf{P}_t$ is   defined in \eqref{def:set-caloric} and the cylinder  $C_{r,r_1}(x_0, \omega)$  in \eqref{def:cylinder-space}. 

Recall that $\Lambda(\cdot)$ is $\frac12$-H\"older continuous, and we always assume without further mention that  $r_1 \geq Cr$ where $C$ is large enough in order to guarantee $\sup_{t \in [t_0 -r^2,t_0]}|\Lambda(t)| \leq \frac{r_1}{2}$. This ensures that the set \eqref{def:set-caloric} does not intersect the top or the bottom of  the cylinder   $C_{r,r_1}(x_0, \omega)$.

The core of the paper is the flatness decay result, which we state shortly. We will of course assume that the excess \eqref{def:excess} is small for all $t \in (t_0-r^2, t_0]$. Usually one also needs to assume that the multiplicity in large scales is less than two, which one then obtains in every scale using monotonicity formula. As already mentioned, we overcome  the lack of  monotonicity formula in $\R^3$ by assuming the following bound on the Willmore energy
\begin{equation} \label{eq:density-point}
\inf_{0 < r <1} \frac{1}{r^2} \Big|\big{\{} t \in [t_0-r^2,t_0] : \frac14 \| \mathrm{H}_{E^{h}(t) }\|_{L^2}^2 \leq 8 \pi - \delta_0 \big{\}} \Big| \geq 1 - \eps_0. 
\end{equation}
The estimate \eqref{eq:density-point} means that in every scale $r$ near $t_0$, for most of  $t $ the Willmore energy of  $E^{h}(t)$ is less than $8 \pi$. We actually show in the next section that there is $T_0$ such that, up to a set with small measure,   all $t_0 \geq T_0$ satisfy the condition \eqref{eq:density-point} with $5 \pi$ instead of $ 8 \pi - \delta_0$. We choose to keep the  above form, because from there it is  clear that the assumption is closely related to the Li-Yau inequality and the result is stronger. We explain in the next section how \eqref{eq:density-point} is related to the multiplicity assumption. We remark that the notation $t_0$ in \eqref{eq:density-point} stands for a generic time, and is not related to the notation $t_k = kh$ for $k =1,2,\dots$ for the time steps in the minimizing movement scheme.

In the planar case, the a priori estimates given by Proposition \ref{prop:apriori-est} (ii) are much stronger and we replace \eqref{eq:density-point} by stronger condition
\begin{equation} \label{eq:density-point-pl}
\begin{split}
&P(E^h(t_0-1)) \leq (2\pi  \sqrt{2}-\delta_0)r \qquad \text{and} \\
&\inf_{0 < r <1} \frac{1}{r^2} \Big|\big{\{} t \in [t_0-r^2,t_0] : \| \kappa_{E^{h}(t) }  - \bar \kappa_{E^{h}(t)} \|_{L^2} \leq \eps_0 \big{\}} \Big| \geq 1 - \eps_0, 
\end{split}
\end{equation}
where $|E^h(t)|= |B_r|$. The condition \eqref{eq:density-point-pl} is strong since  \cite[Proposition 2.1]{JuMoPoSpa} implies that if a $C^2$-regular set $E$ with $|E| = |B_r|$ satisfies $P(E) \leq (2\pi  \sqrt{2}-\delta_0)r$  and  $\| \kappa_{E}  - \bar \kappa_{E} \|_{L^2} \leq \eps_0$ then it is $C^{1+\alpha}$-close to $B_r(x_0)$ for some $x_0 \in \R^2$  and 
\begin{equation} \label{eq:density-point-pl2}
\partial E = \{ (r +g(x))x  + x_0 : x \in \mathbb{S}^1 \} \qquad \text{with } \, \|g\|_{C^{1}( \mathbb{S}^1 )} \leq C \eps_0^\alpha.  
\end{equation}

Here is the statement of the flatness decay. We postpone the proof to Section 5.

\begin{theorem}
\label{thm:decay-flatness}
Let $\{ E^h(t)\}_{t \geq 0}$ be an approximative flat flow in $\R^{n+1}$ with $n \leq 2$, and fix $(x_0,t_0) \in \pa E^h(t_0)$, $t_0 \geq 1$, $C_1 \geq 1$ and small $\alpha,  \delta_0 >0$.  There are   $\sigma \in (0,1)$ and small $r_0, \eps_0>0$ such that assuming  \eqref{eq:density-point} if $n=2$ or    \eqref{eq:density-point-pl} if $n=1$,   
and 
\begin{equation} \label{eq:flatt-assumption}
\mathcal{E}_{A,\omega,c}\big(E^h(t_k); r,r_1 \big) \leq r^{2+\alpha} \qquad \text{for all } \, t_k = kh \in (t_0-r^2, t_0]
\end{equation} 
for $C_0 \sqrt{h} \leq  r \leq r_0 $ for some $C_0$ and  $(A,\omega,c) \in  S^n  \times \mathbb{S}^n \times  \R$ with  $|A|, |c| \leq C_1$,  then there are $(A', \omega',c') \in S^n \times  \mathbb{S}^n \times \R$ such that 
\[
\mathcal{E}_{A',\omega',c'}\big(E^h(t_k);  \sigma r, r_1 \big) \leq ( \sigma r)^{2+\alpha}   \qquad \text{for all } \, t_k =kh  \in (t_0-\sigma^2 r^2, t_0]
\]
and $|A-A'| \leq c_0 r^\alpha$, $|\omega-\omega'| \leq c_0 r^{1+\alpha}, |c-c'|\leq c_0 r^{2+\alpha}$. Here $c_0 $ is a number that does not depend on any  parameter. 
\end{theorem}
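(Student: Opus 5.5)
The plan is a Savin-type improvement-of-flatness argument by compactness, following \cite{DGS, Wa}, but carried out directly on the discrete scheme. First I rescale: set $\eps := r^{\alpha}$ and blow up the cylinder $C_{r,r_1}(x_0,\omega)\times(t_0-r^2,t_0]$ to unit size by $x\mapsto (x-x_0)/r$, $t\mapsto (t-t_0)/r^2$. In the rescaled picture the time step becomes $h/r^2\le C_0^{-2}$. The sets lie within vertical distance $r^{2+\alpha}/r = \eps r$ of the rescaled shifted caloric subgraph, which is itself $C_1 r$-flat. I then subtract the polynomial and divide by $\eps r$, i.e.\ I look at the normalized vertical deviation
\[
w_k(y) \approx \frac{(x-x_0)\cdot\omega - P_{A,c}(\pi_\omega(x-x_0),t_k) - \Lambda(t_k)}{r^{2+\alpha}}
\]
as a multivalued graph of size at most $1$. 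The shift by $\Lambda$ is exactly what cancels the Lagrange multiplier. Subtracting $d_{E^h(t-h)}/h$ from $\Lambda(t_k)-\Lambda(t_{k-1})=h\lambda^h(t_k)$ in the Euler--Lagrange equation \eqref{eg:Euler-Lag} turns it into a discrete equation of the form ``discrete time difference of $w$ $=$ mean curvature operator $+ \operatorname{Tr}A$''. Its linearization, after removing the $b=\operatorname{Tr}(A)$ term absorbed in $P$, is the discrete heat equation with error $O(\eps)$ and $O(r)$.

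The core step is a discrete Harnack inequality in the style of Savin/Wang, giving uniform (in $h$ and $r$) Hölder continuity of $w_k$ down to scales comparable to $\sqrt{h}/r$. Below that scale I use Proposition \ref{prop:apriori-est}(i): after dilation by $1/\sqrt h$ the sets have bounded curvature and elliptic estimates apply. To prove Harnack I slide paraboloids/viscosity test surfaces from below and above. The comparison principle holds at the discrete level because minimizers of \eqref{def:min-prob} are ordered with respect to ordered data $d_E$ once the multiplier has been absorbed into $\Lambda$. An ABP-type measure estimate in the discrete parabolic setting then gives the point-to-measure estimate.

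Here the sheet structure matters. The excess only controls the symmetric difference, so a priori $\partial E^h$ could consist of two nearly parallel sheets. I exclude multiplicity two using \eqref{eq:density-point} for $n=2$: for most times $t$ in every window $[t_0-r^2,t_0]$, the Willmore energy is below $8\pi-\delta_0$. The quantitative Li--Yau inequality (Proposition \ref{prop:kuwert-schaztle}) then implies the density ratios at every scale are below $2-\delta$, so $\partial E^h(t)\cap C$ is a single sheet for those times. The comparison argument propagates this to the remaining, small-measure set of times. For $n=1$, \eqref{eq:density-point-pl} together with \eqref{eq:density-point-pl2} gives a single $C^{1}$ graph directly.

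With uniform Hölder estimates, I argue by contradiction. Suppose the conclusion fails for sequences $r_j\to0$, $h_j\le r_j^2/C_0^2$, $\eps_j\to0$. By Arzelà--Ascoli the $w^{(j)}$ converge uniformly on $Q_{1/2}^-$ to some $w$. The discrete equation converges, in the viscosity sense, to the heat equation $\partial_t w=\Delta w$ with bounded forcing; the forcing comes from the curvature of $P$ and disappears after normalization. Standard $C^{2+\alpha'}$ parabolic estimates with $\alpha'>\alpha$ give a caloric quadratic $\tilde P$ (no constant-in-$x$ linear term issue: a spatial linear term is absorbed by tilting $\omega$) with $|w-\tilde P|\le C\sigma^{3}$ on $Q_\sigma^-$. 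Choosing $\sigma$ so that $C\sigma^3\le \tfrac12\sigma^{2+\alpha}$ and undoing the normalization defines $(A',\omega',c')$. These satisfy the stated bounds with a universal $c_0$, because the caloric polynomial's coefficients are bounded by $\|w\|_\infty\le1$, and this contradicts the assumption for large $j$.

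The main obstacle I expect is the discrete Harnack/Hölder estimate uniform in $h$. Comparison across the threshold scale $\sqrt h$ is delicate: the nonlocal $\Lambda$-shift is only $\tfrac12$-Hölder. In addition, the single-sheet property is guaranteed only for most times, so the measure estimate must tolerate a small bad set of times. This is where $\eps_0$ enters.
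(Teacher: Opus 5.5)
\textbf{Gap: excluding multiple sheets.} You claim that Proposition~\ref{prop:kuwert-schaztle} gives density below $2-\delta$ ``at every scale,'' so that $\pa E^h(t)\cap C$ is a single sheet at good times. Neither part holds for $n=2$.

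First, the proposition requires $\pa E^h(t)\cap B_{2\rho}$ to lie in a slab of width $\rho^{1+\alpha}$. The excess bound $\|u_\pm(\cdot,t)-\Lambda(t)\|_{C^0}\le Cr^2$ supplies this only at rescaled scales $\rho\ge r^{\alpha/2}$.

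Second, an area bound $\Ha^2(\pa E\cap B_\rho)\le(2\pi-\delta/6)\rho^2$ does not make $\pa E$ a single sheet. It only says that the region carrying two or more sheets covers at most a fixed fraction $1-c\delta$ of the disk.

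Third, there is no comparison principle that could ``propagate'' single-sheetedness to the bad times. For $n=1$ good times are single graphs by \eqref{eq:density-point-pl3}, but bad times are still not.

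So you cannot treat $w_k$ as one function. Touching from below only sees the lower envelope $v_r^-$, touching from above only sees $v_r^+$, and one-sided Harnack estimates say nothing about $v_r^+-v_r^-$.

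The missing idea is the coupling in Lemma~\ref{lem:key-lemma} and Proposition~\ref{peop:osc-decay}:
\begin{enumerate}
\item The area bound at good times, together with $\eps_0\le\delta_0^3$ and $\rho\ge 12\eps_0^{1/3}$ to absorb the bad times, gives the purely measure-theoretic bound $|\{v_r^-<v_r^+\}\cap Q_\rho^-|\le(1-\delta_0/120)|Q_\rho^-|$.
\item The weak Harnack inequality is then applied separately to $w_r^-$ and to $M-w_r^+$, each with threshold $1-\delta_0/300$.
\item If the oscillation did not drop by a fixed factor, one would have $v_r^-<M/4$ and $v_r^+>3M/4$ on a $(1-\delta_0/150)$-fraction of the same cylinder, contradicting step 1.
\end{enumerate}
This gives oscillation decay only for $\rho\ge\max\{12\eps_0^{1/3},r^{\alpha/2}\}$ and $\|v_r^+-v_r^-\|_{C^0(Q_{3/4}^-)}\le C(\eps_0+r_0)^{\gamma'}$; it never gives a single sheet. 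That closeness is exactly what your limit needs to yield one function, as does the paper's direct comparison with the caloric replacement of $v_r^-$. Once it is in place, your compactness ending is a legitimate alternative to the paper's direct argument, provided $\eps_0,r_0,\fh\to0$ along the sequence.

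\textbf{Further steps that fail as stated.}

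Minimizers of the volume-constrained problem \eqref{def:min-prob} are not ordered with respect to ordered data. The multipliers depend on the global configuration, and the $\Lambda$-shift does not restore ordering between different flows. What is actually available is the one-sided inequality at a touching point from the Euler--Lagrange equation (Lemmas~\ref{lem-technic}--\ref{lem-visco1}), with test functions defined only at discrete times. The discrete ABP estimate (Lemma~\ref{lem:ABP}) additionally requires showing that touching times for a fixed vertex $\xi$ differ by $O(\fh)$.

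The $\Lambda$-shift also does not cancel the multiplier completely. Passing from $d_{E^h(t-h)}$ to vertical displacement costs a factor $1+O(r^2)$ multiplying $\lambda^h h$. This leaves an error of order $r^{2-\alpha}|\lambda^h(t_k)|$ in the normalized equation, which is only $L^2$ in time and can reach $r^{1-\alpha}\fh^{-1/2}$ at a single step. Pointwise touching arguments break at such steps. The paper absorbs this error with the monotone correction $w_r^\pm=v_r^\pm\pm r^{-2\alpha}\int_{r^2\ft}^0|\lambda^h(\tau+t_0+h)|\,d\tau$.

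Finally, Proposition~\ref{prop:apriori-est}(i) gives no smallness for $w$ below scale $\sqrt{\fh}$; it only yields $|\Delta w|\lesssim r^{-1-\alpha}\fh^{-1/2}$. The covering argument in the weak Harnack inequality handles those scales with the elliptic small-scale measure estimates of Lemma~\ref{lem:basic-micro}.
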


When we iterate the estimate given by  Theorem  \ref{thm:decay-flatness}, we roughly speaking obtain  $C^{2+\alpha}$-regularity for $E^h(t_0)$ in space up to scale of order $\sqrt{h}$. This is the key estimate and we obtain the $C^{2}$-regularity in the ball $B_{\sqrt{h}}(x_0) \times \{t_0\}$ 
using this together with  regularity  estimates for minimal surfaces. 

\subsection{The sub- and supergraphs} 

In the definition of the excess \eqref{def:excess}  it is more convenient to deal with functions than sets. To this aim we assume that $\omega \in \mathbb{S}^{n}$ is the unit vector  in  the excess  \eqref{def:excess} and we may choose the coordinates such that $\omega = e_{n+1}$. By translating the coordinates we may also assume that $x_0 =0$ and we usually denote the point in space as $(x',x_{n+1}) \in \R^{n+1}$  or simply  $(x,x_{n+1}) \in \R^{n+1}$ for $x \in \R^{n}$ if the meaning is clear from the context, and denote the cylinder as $C_{r,r_1} = \{(x,x_{n+1}) \in \R^{n+1} : |x|<r, \,\, |x_{n+1}|<r_1\}$.  Given the set $E^h(t)$ we define the  supergraph $u_+(\cdot, t):  B_r^{n} \to \R$ and the subgraph  $u_-(\cdot, t):  B_r^{n} \to \R$ as
\begin{equation}
\label{def:sub-super}
\begin{split}
&u_+(x, t) := \max \{ x_{n+1}  : (x,x_{n+1}) \in  \partial  E^h(t)\cap C_{r,r_1}\}  \qquad \text{and}\\ 
&u_-(x, t) := \min \{ x_{n+1}  : (x,x_{n+1}) \in  \partial  E^h(t)\cap C_{r,r_1}  \}. 
\end{split}
\end{equation}
Clearly  $u_-(\cdot, t)$ is lower semicontinuous and  $u_+(\cdot, t)$ is upper semicontinuous. 

Let us  assume that $t_0 = k_0 h$ for some $k_0 \in \N$. We rescale the coordinates as 
\begin{equation}
\label{rescale}
y = \frac{x}{r}, \quad \mathfrak{t} = \frac{t-t_0}{r^2}, \quad \mathfrak{h}  = \frac{h}{r^2} \quad \text{and} \quad \mathfrak{t}_k =  (k-k_0) \mathfrak{h}   = \frac{t_k- t_0}{r^2} 
\end{equation}
and define the functions associated with  the excess \eqref{def:excess} as  $v_r^-, v_r^+ : Q_1^-  \to \R$,
\begin{equation}
\label{def:v-r}
v_r^{\pm}(y, \mathfrak{t}_k) := \frac{u_{\pm}(r y, r^2 \mathfrak{t}_k +t_0)-  P(r y ,r^2 \mathfrak{t}_k +t_0 ) - \Lambda(r^2 \mathfrak{t}_k +t_0) }{r^{2+\alpha}}
\end{equation}
and $v_r^{\pm}(y, \mathfrak{t}) = v_r^{\pm}(y, \mathfrak{t}_k)$ for $\mathfrak{t} \in [\mathfrak{t}_k, \mathfrak{t}_{k+1})$, where  $\Lambda(\cdot)$ is  defined  in \eqref{def:Lambda}  and  $P(\cdot)$ is a caloric polynomial as in \eqref{def:caloric}.  
By  \eqref{def:excess},  the assumption \eqref{eq:flatt-assumption} can be written as 
\begin{equation}
\label{eq:restate-flatness}
\|v_r^+\|_{C^0(Q_1^-)} \leq 1 \quad \text{and} \quad \|v_r^-\|_{C^0(Q_1^-)} \leq 1.  
\end{equation}
We stress that the caloric polynomial is the same in the definitions of $v_r^+$ and $v_r^-$.  Throughout the paper we assume that the scaling in time is such that $ \fh$ is small, which means that $r\geq C_0 \sqrt{h}$,  and that  $\alpha \in (0,\tfrac18)$.

\section{Preliminary results}

In this section we collect relevant results  that we need in the proof of Theorem \ref{mainthm} which  are not related to PDEs. We begin by relating the assumption \eqref{eq:density-point} with the multiplicity, or to be more precise, with  the density of the boundary. This is given by the following quantitative version of the  Li-Yau inequality inspired by  \cite[Lemma 1.4]{Sim2}. 
\begin{proposition}
\label{prop:kuwert-schaztle}
Fix $\alpha, \delta \in (0,1)$ and let $x_0 \in \R^3$ and $\omega \in \mathbb{S}^2$. There is $\rho_0 = \rho_0(\alpha, \delta)>0$ such that for all $\rho \leq \rho_0$ and  for every  $C^2$-regular bounded set $E \subset \R^3$ which satisfies
\[
\frac14 \int_{\pa E}  \mathrm{H}_{E}^2 \, d \Ha^2  \leq 8 \pi - \delta \qquad \text{and} \qquad \pa E \cap B_{2\rho}(x_0) \subset \{ x \in \R^3 : |(x-x_0)\cdot \omega| \leq \rho^{1+\alpha}\}
\]
it holds 
\[
\Ha^2\big( \pa E \cap B_\rho(x_0) \big) \leq \big(2\pi -\frac{\delta}{6}\big)  \rho^2.  
\]
\end{proposition}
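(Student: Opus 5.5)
Our approach follows Simon's monotonicity identity for surfaces with $L^2$ mean curvature \cite[Lemma 1.4]{Sim2}, the classical route to the Li--Yau inequality. For $\Sigma = \pa E$ and any $x_0\in\R^3$ and $0<\sigma<\rho'$, the identity reads
\[
\frac{\Ha^2(\Sigma\cap B_\sigma(x_0))}{\sigma^2}
+\int_{\Sigma\cap (B_{\rho'}\setminus B_\sigma)}\Big|\frac14 \vec{\mathrm H}+\frac{(x-x_0)^\perp}{|x-x_0|^2}\Big|^2 d\Ha^2
=\frac{\Ha^2(\Sigma\cap B_{\rho'})}{\rho'^2}+\frac1{16}\int_{\Sigma\cap B_{\rho'}}\mathrm H^2 \,d\Ha^2 + R_\sigma-R_{\rho'} ,
\]
where $R_s = \frac{1}{2s^2}\int_{\Sigma\cap B_s}\vec{\mathrm H}\cdot (x-x_0)\,d\Ha^2$. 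We let $\sigma\to 0$ if $x_0\in\Sigma$ (obtaining $\pi\theta(x_0)$), or keep $\sigma$ fixed, and let $\rho'\to\infty$. Since $E$ is bounded, the terms at $\rho'$ vanish. This gives, for every $x_0$ and every $\sigma>0$,
\[
\frac{\Ha^2(\Sigma\cap B_\sigma(x_0))}{\pi\sigma^2}\le \frac{1}{16\pi}\int_\Sigma \mathrm H^2\,d\Ha^2 - \frac{1}{\pi}\int_{\Sigma\setminus B_\sigma}\Big|\tfrac14\vec{\mathrm H}+\tfrac{(x-x_0)^\perp}{|x-x_0|^2}\Big|^2 + \frac{R_\sigma}{\pi}.
\]
We drop the nonnegative square term. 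With the hypothesis $\frac14\int\mathrm H^2\le 8\pi-\delta$, the first term is at most $2-\frac{\delta}{4\pi}$, so the main term is $\Ha^2(\Sigma\cap B_\sigma)\le (2\pi-\frac{\delta}{4})\sigma^2 + \sigma^2 R_\sigma$. This leaves a margin $\delta/4-\delta/6=\delta/12$ to absorb the error.

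The main step is to control the cross term $R_\sigma$ at $\sigma=\rho$ using the flatness hypothesis. A crude Cauchy--Schwarz bound $|R_\rho|\le \frac{1}{2\rho}\|\mathrm H\|_{L^2}\Ha^2(\Sigma\cap B_\rho)^{1/2}$ is only $O(1)$, so it is not enough. Instead we exploit that $\Sigma\cap B_{2\rho}$ lies in the slab $\{|(x-x_0)\cdot\omega|\le\rho^{1+\alpha}\}$. We split $x-x_0 = \pi_\omega(x-x_0) + ((x-x_0)\cdot\omega)\omega$. The normal component gives $|(x-x_0)\cdot\omega|\le \rho^{1+\alpha}$, whose contribution is bounded by $\rho^{\alpha-1}\|\mathrm H\|_{L^2}\Ha^2(\Sigma\cap B_\rho)^{1/2}/\rho\lesssim \rho^\alpha$, provided we have an area bound $\Ha^2(\Sigma\cap B_\rho)\le C\rho^2$. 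That area bound follows from the same monotonicity inequality, in the form of Simon's bound $\sigma^{-2}\Ha^2(\Sigma\cap B_\sigma)\le C(1+\int\mathrm H^2)$. For the tangential-to-slab component we use $\vec{\mathrm H}\cdot\pi_\omega(x-x_0) = \mathrm{div}_\Sigma$ applied via the first variation: we write $\int_{\Sigma\cap B_\rho}\vec{\mathrm H}\cdot X$ with the vector field $X=\varphi(|x-x_0|)\pi_\omega(x-x_0)$ for a cutoff $\varphi$, and then $-\int\mathrm{div}_\Sigma X$. Here we must use that $\Sigma$ is nearly horizontal in the integral sense. Rather than doing this, I would smooth the cutoff in Simon's identity and use the slab confinement through the standard \emph{tilt-excess} estimate: a surface confined to a slab of width $\rho^{1+\alpha}$ with $L^2$-bounded curvature has $\int_{\Sigma\cap B_\rho}|\nu\cdot\omega^\perp|^2\lesssim \rho^{2+\alpha}$ (Caccioppoli-type inequality obtained by testing the first variation with $((x-x_0)\cdot\omega)\,\eta^2\,\omega$). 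Together with $(x-x_0)^\perp = ((x-x_0)\cdot\nu)\nu$ and $|(x-x_0)\cdot\nu|\le |(x-x_0)\cdot\omega| + |x-x_0||\nu-\pm\omega|$, this shows $|R_\rho|\le C\rho^{\alpha/2}$ (with the $\delta$-independent constant depending on the energy bound $8\pi$).

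Finally we choose $\rho_0(\alpha,\delta)$ so that $C\rho_0^{\alpha/2}\le \delta/12$. Then $\Ha^2(\Sigma\cap B_\rho)\le (2\pi-\frac{\delta}{4}+\frac{\delta}{12})\rho^2 = (2\pi-\frac{\delta}{6})\rho^2$. I expect the main obstacle to be the tilt-excess bound. The hard-edged ball $B_\rho$ in $R_\rho$ makes integration by parts produce boundary terms, and the slab hypothesis is on $B_{2\rho}$, which gives exactly the room needed for a cutoff. My first attempt would be to replace the indicator of $B_\rho$ by a cutoff between $B_\rho$ and $B_{2\rho}$ in a smoothed version of Simon's identity. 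Alternatively, I would average $\sigma\in(\rho,2\rho)$, at the cost of adjusting constants.
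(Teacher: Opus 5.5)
Your proposal is correct and is essentially the paper's proof. Both arguments send the outer radius in Simon's monotonicity inequality to infinity, derive the area bound $\Ha^2(\pa E\cap B_{2\rho}(x_0))\le C\rho^2$, prove the tilt-excess estimate $\int_{\pa E\cap B_\rho(x_0)}\big(1-(\nu_E\cdot\omega)^2\big)\,d\Ha^2\le C\rho^{2+\alpha}$ by testing $-\Delta_{\pa E}\big((x-x_0)\cdot\omega\big)=\mathrm{H}_E\,\nu_E\cdot\omega$ with $\big((x-x_0)\cdot\omega\big)\zeta^2$, and apply Cauchy--Schwarz to the slab/tilt splitting of $(x-x_0)\cdot\nu_E$, giving a cross term of order $\rho^{\alpha/2}$. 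The obstacle you anticipate does not arise: the cross term over the sharp ball $B_\rho(x_0)$ is bounded directly by Cauchy--Schwarz with no integration by parts, and the cutoff between $B_\rho(x_0)$ and $B_{2\rho}(x_0)$ enters only in the tilt-excess step, which is exactly how the paper uses the hypothesis on $B_{2\rho}(x_0)$.
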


\begin{proof}
We first note that since our surface $\pa E$ is a boundary of a regular set and $\nu_E$ denotes the outer unit normal, we define  the mean curvature vector as $H_{E}\nu_E$ and  recall the well-known fact that 
\begin{equation}
\label{prop:willmore1}
-\Delta_{\pa E}  x =  H_E \nu_E \qquad \text{on } \, \pa E. 
\end{equation} 

Without loss of generality we assume $x_0 = 0$ and $\omega = e_3$.  For $0< r <R$ we recall the following consequence of the monotonicity formula from \cite[formula (1.2)]{Sim2} (see also \cite[Appendix (A.3)]{KS01}), which  in our notation reads as
\begin{equation}
\begin{split}
&\frac{\Ha^2\big( \pa E \cap B_r \big)}{r^2}   - \frac{1}{2r^2} \int_{\pa E \cap B_r} H_E (x \cdot \nu_E) \, d \Ha^2  \\ 
&\,\,\,\,\,\,\,\,\leq \frac{\Ha^2\big( \pa E \cap B_R \big)}{R^2} +  \frac{1}{16} \int_{\pa E\cap B_R}  \mathrm{H}_{E}^2 \, d \Ha^2- \frac{1}{2R^2} \int_{\pa E \cap B_R} H_E (x \cdot \nu_E) \, d \Ha^2.  
\end{split}
\end{equation} 
Letting $R \to \infty$, we obtain from the fact that $E$ is bounded  and from the bound on the Willmore energy that 
\begin{equation}
\label{prop:willmore2}
\frac{\Ha^2\big( \pa E \cap B_r \big)}{r^2}   - \frac{1}{2r^2} \int_{\pa E \cap B_r} H_E (x \cdot \nu_E) \, d \Ha^2 \leq   \frac{1}{16} \int_{\pa E}  \mathrm{H}_{E}^2 \, d \Ha^2 \leq 2\pi -\frac{\delta}{4}. 
\end{equation} 
Using \eqref{prop:willmore2} for $r = \rho$, it is thus enough to prove that 
\begin{equation}
\label{prop:willmore3}
\frac{1}{\rho^2} \int_{\pa E \cap B_\rho} |H_E (x \cdot \nu_E)| \, d \Ha^2 \leq \frac{\delta}{6} \qquad \text{for }\,\rho \leq \rho_0
\end{equation} 
in order to conclude the proof. 

Using Cauchy-Schwarz inequality we obtain from  \eqref{prop:willmore2}   with $r = 2 \rho$  that $\Ha^2\big( \pa E \cap B_{2\rho} \big) \leq C\rho^2$.  Let us denote $\nu_3 = \nu_E \cdot e_3$ and $x_3 = x \cdot e_3$ for short. Then \eqref{prop:willmore1} implies 
\[
-\Delta_{\pa E}  x_3 =  H_E \nu_3.
\]  
We choose a cut-off function $\zeta \in C_0^1(B_{2 \rho})$ such that $\zeta = 1$ in $B_\rho$, $0\leq \zeta \leq 1$ and $|\nabla \zeta |\leq 2\rho^{-1}$.  Multiply the above equation by $x_3 \zeta^2$ and integrate by parts to obtain
\[
\int_{\pa E} |\nabla_{\pa E} x_3|^2 \zeta^2 \, d\Ha^2 \leq \int_{\pa E} | H_E  x_3 \nu_3| \zeta^2 \, d \Ha^2 + 2\int_{\pa E}|x_3|  |\nabla_{\pa E} x_3| \zeta|\nabla_{\pa E}  \zeta|  \,d \Ha^2. 
\]
Using Young's inequality $2|x_3|  |\nabla_{\pa E} x_3| \zeta|\nabla_{\pa E}  \zeta| \leq  \frac12  |\nabla_{\pa E} x_3|^2 \zeta^2 + 2|\nabla_{\pa E}  \zeta|^2 x_3^2 $ we deduce 
\begin{equation}
\label{prop:willmore4}
\int_{\pa E} |\nabla_{\pa E} x_3|^2 \zeta^2 \, d\Ha^2 \leq 2 \int_{\pa E} | H_E  x_3 \nu_3| \zeta^2 \, d \Ha^2 + 4\int_{\pa E}|\nabla_{\pa E}  \zeta|^2 x_3^2  \, d\Ha^2. 
\end{equation} 
Recall that $\zeta = 0$ outside $B_{2\rho}$ and the assumption yields $|x_3| \leq \rho^{1+\alpha}$ on $B_{2\rho}\cap \pa E$. Therefore we may estimate using $\Ha^2\big( \pa E \cap B_{2\rho} \big) \leq C\rho^2$ 
\[
 \int_{\pa E} | H_E  x_3 \nu_3| \zeta^2 \, d \Ha^2 \leq \rho^{1+\alpha}  \|H_E\|_{L^2(\pa E)} \Ha^2(\pa E \cap B_{2\rho})^{\frac12} \leq C   \rho^{2+\alpha}
\]
and  then by $|\nabla \zeta |\leq 2\rho^{-1}$ we have $\int_{\pa E}|\nabla_{\pa E}  \zeta|^2 x_3^2  \, \Ha^2 \leq C \rho^{2+2\alpha}$. Note also that on $\pa E$ it holds  $|\nabla_{\pa E} x_3|^2 = 1 -\nu_3^2$. Therefore we have by \eqref{prop:willmore4}
\begin{equation}
\label{prop:willmore5}
 \int_{\pa E\cap B_\rho} (1-\nu_3^2) \, d \Ha^2 \leq C  \rho^{2+\alpha}. 
\end{equation} 

We obtain \eqref{prop:willmore3} from \eqref{prop:willmore5} as follows.  Write  $x' = x - x_3e_3$ and $\nu' = \nu_E - \nu_3e_3$ and estimate
\[
\int_{\pa E \cap B_\rho} |H_E (x \cdot \nu_E)| \, d \Ha^2 \leq  2 \|H_E\|_{L^2(\pa E)} \left( \int_{\pa E \cap B_\rho} |x'\cdot \nu'|^2 + |x_3\nu_3|^2 \, d \Ha^2 \right)^\frac12.  
\]
Since $|x_3| \leq \rho^{1+\alpha}$ on $B_{\rho}\cap \pa E$ we have  $\int_{\pa E \cap B_\rho} |x_3\nu_3|^2 \, d \Ha^2 \leq C  \rho^{4+2\alpha}$. On the other hand, since $|\nu'|^2 = 1 - \nu_3^2$ we have by \eqref{prop:willmore5} that 
\[
\int_{\pa E \cap B_\rho} |x'\cdot \nu'|^2 \, d \Ha^2 \leq  \rho^2 \int_{\pa E \cap B_\rho} |\nu'|^2 \, d \Ha^2 \leq C  \rho^{4+\alpha}.
\]
In conclusion, we have 
\[
\int_{\pa E \cap B_\rho} |H_E (x \cdot \nu_E)| \, d \Ha^2 \leq C  \rho^{2+\frac{\alpha}{2}},
\]
which implies \eqref{prop:willmore3} when $C\rho^\frac{\alpha}{2} \leq \frac{\delta}{6}$.   
\end{proof}

We may thus avoid the use of (parabolic) monotonicity formula  by assuming that we may choose a time $t_0$ such that \eqref{eq:density-point} holds 
for a small $\eps_0>0$, whose choice will be clear later.  If we assume \eqref{eq:density-point},  there is 
a set $I_{\eps_0} \subset [t_0 -1 ,t_0]$ such that for all $r\in (0,r_0)$
\begin{equation} \label{eq:density-point-20}
 | I_{\eps_0} \cap [t_0-r^2,t_0]| \geq (1 - \eps_0)r^2
\end{equation}
 and for all $t \in I_{\eps_0}$ it holds 
\begin{equation} \label{eq:density-point-2}
\frac14 \int_{\pa E^h(t)}  \mathrm{H}_{E^h(t)}^2 \, d \Ha^2  \leq 8 \pi - \delta_0.
\end{equation}
This means that we have the first assumption of Proposition \ref{prop:kuwert-schaztle}. The second follows from the assumption on the excess  \eqref{eq:flatt-assumption} for suitable radii, which will be clear later  in the proof of Lemma \ref{lem:key-lemma}. 

 In the planar case, if we assume \eqref{eq:density-point-pl} and $|E^h(t_0)| = |B_r|$, then by \eqref{eq:density-point-pl2} there is a set $I_{\eps_0} \subset [t_0 -1 ,t_0]$ which satisfies \eqref{eq:density-point-20}   and for all $t \in I_{\eps_0}$ it holds 
\begin{equation} \label{eq:density-point-pl3}
\partial E^h(t) = \{ (r +g(x,t))x  + x_t : x \in \mathbb{S}^1 \} \qquad \text{with } \, \|g(\cdot, t)\|_{C^{1}( \mathbb{S}^1 )} \leq C \eps_0^\alpha.  
\end{equation}

The problem is to find such times $t_0$ for which the assumption \eqref{eq:density-point} for $n=2$ or \eqref{eq:density-point-pl} for $n=1$ holds. 
In order to prove this, we use  the exponential convergence proven in \cite{JuMoOrSpa} in the case $\R^3$ and \cite{JuMoPoSpa} in $\R^2$. We state the result for the approximative flat flow, since this is in fact what the  proofs in \cite{JuMoOrSpa, JuMoPoSpa} provide. 

\begin{proposition}
\label{prop:JuMoOrSpa}
Let $\{E(t)\}_{t\geq 0}$ be a volume-preserving flat flow in $\R^{n+1}$ with $n \leq 2$ as in the statement of Theorem \ref{mainthm}  and assume  $\{E^{h_n}(t)\}_{t\geq 0}$ is the associated  appriximative flat flow converging to it. Then there is $x_0 \in \R^{n+1}$ such that 
\[
\sup_{x \in E^{h_n}(t) \Delta B_r(x_0)} \text{dist}(x, \partial B_r(x_0)) + \big|P(E^{h_n}(t)) - P(B_r) \big| \leq C e^{-c_1t}
\]   
for $C\geq 1$,  $c_1>0$ and $h_n$ small. Moreover, it holds 
\[
\int_{T}^\infty \|\mathrm{H}_{E^{h_n}(t) } - \lambda^{h_n}(t)\|_{L^2}^2 \, dt \leq C e^{-c_1T}. 
\]
\end{proposition}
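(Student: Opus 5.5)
The proposition is essentially a restatement of the main results of \cite{JuMoOrSpa} (for $n=2$) and \cite{JuMoPoSpa} (for $n=1$). The plan is to go back into those proofs and observe that every estimate there is derived at the discrete level, uniformly in $h$, rather than only for the limit flow. The strategy has three steps.

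First, the perimeter assumption $\liminf_{t\to\infty} P(E(t)) < 2^{\frac{1}{n+1}}P(B_r)$ will be transferred to the approximating flows. Since perimeter is lower semicontinuous, the energy $t\mapsto P(E^h(t))$ is nonincreasing, and $E^{h_n}(t)\to E(t)$ in $L^1$, we can fix a time $\bar T$ and an $h_0$ such that
\[
P(E^{h_n}(\bar T)) \leq 2^{\frac1{n+1}}P(B_r)-\delta
\]
for all $h_n\le h_0$, possibly after passing to a further subsequence. Monotonicity of the discrete energy then gives the same bound for all $t\ge \bar T$. By the convergence theory of \cite{JN}, in the form of the quantitative Alexandrov-type estimates used in \cite{JuMoOrSpa}, the energy bound excludes splitting into two or more balls. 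Hence for $t\ge\bar T$ the sets $E^{h_n}(t)$ are $L^1$-close to a single ball $B_r(x_t)$, once the dissipation $\|\mathrm{H}-\lambda\|_{L^2}$ is small.

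Second, we derive exponential decay of the energy gap $P(E^{h}(t))-P(B_r)$. The approach combines the dissipation inequality, Proposition~\ref{prop:apriori-est}~(ii), with a discrete Łojasiewicz-type inequality near the ball,
\[
P(E)-P(B_r)\le C\|\mathrm{H}_E-\bar{\mathrm H}_E\|_{L^2}^2 .
\]
This inequality is exactly the key quantitative estimate of \cite{JuMoOrSpa} in $\R^3$, valid for sets close to one ball, and of \cite{JuMoPoSpa} in the plane. Applying it to the discrete Euler–Lagrange equation \eqref{eg:Euler-Lag} yields the recursion $f(t_{k})\le f(t_{k-1})-c\,h\,f(t_k)$ for $f=P(E^h)-P(B_r)$. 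This gives $f(t)\le Ce^{-c_1t}$ uniformly in $h$. Integrating the dissipation inequality over $[T,\infty)$ then immediately gives the second claim.

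Third, the Hausdorff-type estimate will be obtained from the $L^2$ dissipation bound. The centers $x_t$ move by an amount controlled by $\int\|\mathrm H-\lambda\|_{L^1}\,dt$, which is exponentially summable by Cauchy–Schwarz, so $x_t\to x_0$ exponentially. Then the density estimates together with $|d_{E^h(t-h)}|\le C\sqrt h$ from Proposition~\ref{prop:apriori-est}~(i) upgrade the $L^1$ closeness to a bound on $\sup_{x\in E^{h_n}(t)\Delta B_r(x_0)}\mathrm{dist}(x,\partial B_r(x_0))$.

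The main obstacle is the Hausdorff upgrade uniformly in $h$. It requires excluding thin tentacles, for which we would follow the barrier and density arguments of \cite{JuMoOrSpa} carried out at the level of each minimizing-movement step. Since those works already prove everything for $E^{h_n}$ with constants independent of $h_n$, the proof reduces to citing them after checking this uniformity.
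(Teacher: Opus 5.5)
There is a genuine gap in your first step. Lower semicontinuity of the perimeter under $L^1$ convergence gives $P(E(t)) \le \liminf_n P(E^{h_n}(t))$. That is a \emph{lower} bound on the discrete perimeters in terms of the limit. To conclude $P(E^{h_n}(\bar T)) \le 2^{\frac1{n+1}}P(B_r)-\delta$ from the hypothesis on $\liminf_{t\to\infty}P(E(t))$, you would need the reverse inequality.

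Monotonicity of $t\mapsto P(E^h(t))$ and $L^1$ convergence do not supply it. Strong BV convergence of the scheme is not known, as the introduction stresses. So the approximating sets may a priori carry perimeter that disappears in the $L^1$ limit. The hypothesis concerns only the limit flow and by itself says nothing about $P(E^{h_n}(\bar T))$. Everything downstream depends on this uniform-in-$h$ perimeter bound: excluding splitting at the discrete level, your exponential-decay recursion, and the Hausdorff upgrade. This is where the argument breaks.

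The paper uses lower semicontinuity in the correct direction. By \cite{JN}, the limit flow is $L^1$-asymptotic to a union $F_t$ of $N$ balls of equal radius, so $\liminf_t P(E(t)) \ge N^{\frac1{n+1}}P(B_1)$, and the hypothesis forces $N=1$. The transfer to the approximating flows is a separate, genuinely discrete input: estimate (4.32) in the proof of \cite[Theorem 1.1]{JN}. It yields $t_0$ and $h_0$ with $|P(E^h(t_0))-P(B_1)|\le\delta$ for all $h\le h_0$.

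Only after that can one run the proofs of \cite[Theorem 1.2]{JuMoOrSpa} and \cite[Theorem 1.2]{JuMoPoSpa} at the level of $E^{h_n}$, which is where your Steps 2 and 3 would belong. The second estimate then follows from the first via the dissipation inequality, as you say.
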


\begin{proof}
We may assume that $|E(t)| = |B_1|$ and we denote $h = h_n$.  By the result in \cite{JN} there are point $x_1(t), \dots, x_N(t) \in \R^{n+1}$ such that for $F_t = \bigcup_{i = 1}^N B_{\rho}(x_i(t))$  with $|x_i(t) - x_j(t)| \geq 2 \rho$ and $|F_t|= |B_1|$ it holds 
\[
\lim_{t \to \infty} |E(t) \Delta F_t|= 0. 
\]
Since $P(F_t)= N^{\frac{1}{n+1}}P(B_1)$ we have by the lower semicontinuity of the perimeter that 
\[
\liminf_{t \to \infty}P(E(t)) \geq   N^{\frac{1}{n+1}}P(B_1).
\]
 The assumption in Theorem \ref{mainthm}  then implies that $N=1$.   We use the estimate (4.32) in the proof of \cite[Theorem 1.1]{JN}, which implies that for a fixed small $\delta>0$ there are $h_0>0$ and $t_0>0$ such that 
\[
\big| P(E^{h}(t_0)) - P(B_1) \big| \leq \delta
\]
for all $h \leq h_0$. Therefore we may use \cite[Theorem 1.2]{JuMoPoSpa} and  \cite[Theorem 1.2]{JuMoOrSpa}, which are stated for the limiting family $\{ E(t)\}_{t \geq 0}$ but from the proof it is clear that the result holds also for the approximative flat flow $\{ E^h(t)\}_{t \geq 0}$. This  implies the first estimate. The second estimate follows from the first and from the dissipation inequality in Proposition \ref{prop:apriori-est} (ii). 
\end{proof}

Let us focus on the $\R^3$ case for a moment. We  recall the following estimates for the Willmore energy which can be found in  \cite[Corollary 3.2]{JuMoOrSpa}. 
\begin{lemma}
\label{prop:JuMoOrSpa2} 
Let $\delta_0 >0$. There exists $q \in(0,1)$ and a constant $C$ such that for every $C^2$-regular set $E \subset \R^3$ with volume $|E| = |B_1|$ and $P(E) \leq 4 \pi \, \sqrt[3]{2} -\delta_0$ it holds 
\[
\big| \bar{\mathrm{H}}_{E} - 2 \big|  \leq C \|\mathrm{H}_{E}  -  \bar{\mathrm{H}}_{E}\|_{L^2}^q 
\]
and the Willmore energy satisfies
\[
\frac14 \int_{\pa E}  \mathrm{H}_{E}^2 \, d \Ha^2 \leq 4 \pi + C \|\mathrm{H}_{E}  - \bar{\mathrm{H}}_{E}\|_{L^2}^q . 
\]
\end{lemma}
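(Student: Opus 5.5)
The plan is to reduce both estimates to one statement: a quantitative bound on the perimeter deficit,
\[
0 \le P(E) - P(B_1) \le C \|\mathrm{H}_{E} - \bar{\mathrm{H}}_{E}\|_{L^2}^{q},
\]
valid under $|E| = |B_1|$ and $P(E) \leq 4\pi\sqrt[3]{2} - \delta_0$. Both claims then follow from elementary integral identities. Throughout we may assume $\|\mathrm{H}_{E} - \bar{\mathrm{H}}_{E}\|_{L^2} \le \eta$ for a small $\eta = \eta(\delta_0)$. Otherwise it is enough to bound $|\bar{\mathrm{H}}_E|$ and $\|\mathrm{H}_E\|_{L^2}$ a priori by a constant times $1 + \|\mathrm{H}_E - \bar{\mathrm{H}}_E\|_{L^2}$, which follows from the identities below together with the diameter bound. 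Enlarging $C$ then gives the estimates trivially.

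\emph{Step 1 (the deficit).} For the deficit bound I will use the quantitative Alexandrov theorem in $\R^3$ from \cite{JN} (see also \cite{DM}). It says that a $C^2$ set with small $\|\mathrm{H}_E - \bar{\mathrm{H}}_E\|_{L^2}$ is close to a finite union of disjoint (possibly tangent) balls $F$ of equal radii $\rho$. The closeness is quantitative in the sense that
\[
|P(E) - P(F)| + |E \Delta F| \le C\|\mathrm{H}_E - \bar{\mathrm{H}}_E\|_{L^2}^{q}.
\]
Since $|F| \approx |B_1|$, we have $P(F) \approx N^{1/3} P(B_1)$. For $\eta$ small, the hypothesis $P(E) \le 4\pi\sqrt[3]{2} - \delta_0$ excludes $N \ge 2$, so $N = 1$ and $\rho \approx 1$. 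Correcting the radius by the volume constraint gives the displayed deficit bound. This is the main obstacle of the proof. If the precise quantitative statement in \cite{JN} is only qualitative, I would first prove qualitative closeness by compactness, using \cite{DM} for the limit and the perimeter bound to rule out bubbling. I would then upgrade it to a power rate by a Fuglede-type argument near the ball, where $E$ becomes a nearly spherical graph and the $L^2$ norm of $\mathrm{H}_E - \bar{\mathrm{H}}_E$ controls the $H^1$-type deficit.

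\emph{Step 2 (the average curvature).} Translate $E$ so that $0 \in E$. Integrating the tangential divergence $\operatorname{div}_{\pa E} x = 2$ and using \eqref{prop:willmore1} gives the Minkowski identity $\int_{\pa E} \mathrm{H}_E\,(x\cdot\nu_E)\,d\Ha^2 = 2P(E)$. The divergence theorem gives $\int_{\pa E} x\cdot\nu_E\,d\Ha^2 = 3|E| = 4\pi$. Combining the two,
\[
4\pi\,\bar{\mathrm{H}}_E = 2P(E) - \int_{\pa E} (\mathrm{H}_E - \bar{\mathrm{H}}_E)(x\cdot\nu_E)\,d\Ha^2 .
\]
The error term is at most $\operatorname{diam}(E)\,P(E)^{1/2}\,\|\mathrm{H}_E - \bar{\mathrm{H}}_E\|_{L^2}$. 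Simon's diameter estimate gives
\[
\operatorname{diam}(E) \le C P(E)^{1/2}\|\mathrm{H}_E\|_{L^2} \le C\bigl(\|\mathrm{H}_E - \bar{\mathrm{H}}_E\|_{L^2} + |\bar{\mathrm{H}}_E|\bigr),
\]
using $P(E)\le C$. Since the error is multiplied by the small quantity $\|\mathrm{H}_E - \bar{\mathrm{H}}_E\|_{L^2}$, the term involving $|\bar{\mathrm{H}}_E|$ can be absorbed into the left-hand side. This yields $|\bar{\mathrm{H}}_E| \le C$, and then
\[
\bigl|\bar{\mathrm{H}}_E - \tfrac{2P(E)}{4\pi}\bigr| \le C\|\mathrm{H}_E - \bar{\mathrm{H}}_E\|_{L^2}.
\]
Together with Step 1 and $P(B_1) = 4\pi$, we get $|\bar{\mathrm{H}}_E - 2| \le C\|\mathrm{H}_E - \bar{\mathrm{H}}_E\|_{L^2}^{q}$.

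\emph{Step 3 (the Willmore energy).} Expanding the square,
\[
\tfrac14\int_{\pa E}\mathrm{H}_E^2\,d\Ha^2 = \tfrac14\|\mathrm{H}_E - \bar{\mathrm{H}}_E\|_{L^2}^2 + \tfrac14\bar{\mathrm{H}}_E^2\,P(E).
\]
By Steps 1 and 2 the last term equals $\tfrac14\cdot 4\cdot 4\pi + O(\|\mathrm{H}_E - \bar{\mathrm{H}}_E\|_{L^2}^{q})$. This gives the Willmore bound after possibly decreasing $q \le 1$.
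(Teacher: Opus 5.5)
The paper does not prove this lemma; it quotes it from \cite[Corollary 3.2]{JuMoOrSpa}. Your Steps 1--3 (quantitative Alexandrov $\Rightarrow$ perimeter deficit $\Rightarrow$ Minkowski identity $\Rightarrow$ expansion of the Willmore energy) are a sound self-contained derivation, but only when $\|\mathrm{H}_E-\bar{\mathrm{H}}_E\|_{L^2}\le\eta$. Your opening reduction of the complementary case is wrong, and no argument can replace it, because the lemma as printed is false for large deficits.

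Since $\int_{\pa E}(\mathrm{H}_E-\bar{\mathrm{H}}_E)\,d\Ha^2=0$, your own Step 3 identity gives
\[
\frac14\int_{\pa E}\mathrm{H}_E^2\,d\Ha^2\;\ge\;\frac14\|\mathrm{H}_E-\bar{\mathrm{H}}_E\|_{L^2}^2 .
\]
For any $q<2$ this is not bounded by $4\pi+C\|\mathrm{H}_E-\bar{\mathrm{H}}_E\|_{L^2}^q$ once the deficit is large. Large deficits are compatible with the constraints. Corrugate $\pa B_1$ with amplitude $\eps$ and frequency $k$, with $\eps k\to0$ and $\eps k^2\to\infty$, and rescale to fix the volume. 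The perimeter stays close to $4\pi$, while $\|\mathrm{H}_E-\bar{\mathrm{H}}_E\|_{L^2}\gtrsim\eps k^2\to\infty$.

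The first estimate fails as well. Attach to $B_1$ a thin tube of radius $r\to0$ and fixed small area $a$. Then $\bar{\mathrm{H}}_E-2\sim a/r$ but $\|\mathrm{H}_E-\bar{\mathrm{H}}_E\|_{L^2}\sim\sqrt{a}/r$, so $|\bar{\mathrm{H}}_E-2|/\|\mathrm{H}_E-\bar{\mathrm{H}}_E\|_{L^2}^q$ blows up for $q<1$.

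Concretely, two things go wrong in your reduction. A bound growing linearly (or, for the Willmore energy, quadratically) in the deficit is never dominated by $C\|\mathrm{H}_E-\bar{\mathrm{H}}_E\|_{L^2}^q$ with $q<1$. Moreover, the absorption in Step 2 gives $|\bar{\mathrm{H}}_E|\le C$ only when the deficit is small, so it does not even yield the a priori bound you invoke.

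The lemma must be read with the additional hypothesis $\|\mathrm{H}_E-\bar{\mathrm{H}}_E\|_{L^2}\le\eta_0$. This costs the paper nothing: the lemma is only applied in \eqref{def:gammaT-2}--\eqref{def:gammaT-3}, at times $t\notin\Gamma_T$, where $\|\mathrm{H}_{E^{h}(t)}-\bar{\mathrm{H}}_{E^{h}(t)}\|_{L^2}^2\le\|\mathrm{H}_{E^{h}(t)}-\lambda^{h}(t)\|_{L^2}^2<e^{-\frac{c_1}{2}T}$. So drop the reduction and state this hypothesis instead.

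A smaller repair is needed in Step 2. Simon's diameter estimate holds for connected closed surfaces, but $\pa E$ may have several components and $E$ several distant pieces, so $\operatorname{diam}(E)\le CP(E)^{1/2}\|\mathrm{H}_E\|_{L^2}$ is not available as stated. Here is a fix:
\begin{enumerate}
\item On each component $\Sigma$ of $\pa E$ you have $\int_\Sigma\mathrm{H}_E\nu_E\,d\Ha^2=0$ by \eqref{prop:willmore1}, and $\int_\Sigma\nu_E\,d\Ha^2=0$.
\item Hence $\int_\Sigma(\mathrm{H}_E-\bar{\mathrm{H}}_E)(x\cdot\nu_E)\,d\Ha^2$ is translation invariant, so you may put the origin on $\Sigma$ and apply Simon's estimate to $\Sigma$ alone.
\item Summing over components bounds the error term by $CP(E)\|\mathrm{H}_E\|_{L^2}\|\mathrm{H}_E-\bar{\mathrm{H}}_E\|_{L^2}$, after which your absorption argument goes through.
\end{enumerate}
Alternatively, the Hausdorff-distance form of the quantitative Alexandrov theorem bounds $\operatorname{diam}(E)$ directly. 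With these two corrections, your argument proves the lemma in the regime in which the paper uses it.
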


We proceed by recalling  the trivial fact that for every $\lambda \in \R$ it holds
\[
 \|\mathrm{H}_{E}  - \bar{\mathrm{H}}_{E}\|_{L^2} \leq \|\mathrm{H}_{E} - \lambda \|_{L^2}. 
\]
Therefore we deduce from Proposition \ref{prop:JuMoOrSpa}   that for $T>1$ the set
\begin{equation} \label{def:gammaT}
\Gamma_T :=  \{ t \in [T,\infty) : \|\mathrm{H}_{E^{h_n}(t) } -  \lambda^{h_n}(t) \|_{L^2}^2 \geq e^{-\frac{c_1}{2}T} \}
\end{equation}
has measure $|\Gamma_T| \leq C e^{-\frac{c_1}{2}T}$ and by  Lemma \ref{prop:JuMoOrSpa2} it holds for $T$ large enough
\begin{equation} \label{def:gammaT-2}
\frac14 \int_{\pa E^{h_n}(t)}  \mathrm{H}_{E^{h_n}(t)}^2 \, d \Ha^2 \leq 4 \pi + C e^{-\frac{c_1 q}{4}T} < 5 \pi  \qquad \text{for every } \, t \in [T,\infty)\setminus \Gamma_T,
\end{equation}
when $|E^{h_n}(t)| = |B_1|$.  Similarly we may estimate the Lagrange multipliers, again by Lemma \ref{prop:JuMoOrSpa2}, for every $t \in [T,\infty)\setminus \Gamma_T$  as
\begin{equation} \label{def:gammaT-3}
\begin{split}
| \lambda^{h_n}(t)  - 2 | &\leq | \bar{\mathrm{H}}_{E^{h_n}(t)}  -2 | +  | \bar{\mathrm{H}}_{E^{h_n}(t)} - \lambda^{h_n}(t) |  \\
&\leq | \bar{\mathrm{H}}_{E^{h_n}(t)}  -2 | + C \|\mathrm{H}_{E^{h_n}(t)} - \lambda^{h_n}(t)\|_{L^2} \leq  C e^{-\frac{c_1 q}{4}T}.  
\end{split}
\end{equation}

In order to guarantee that we find  times for which \eqref{eq:density-point} (or \eqref{eq:density-point-pl} in the planar case)  holds,  we  use the previous estimates and the following measure theoretical lemma, perhaps well-known, which gives a quantitative  size  of the set of points with positive density.  
\begin{lemma}
\label{lem:lebesgue}
Assume that a measurable set $\Gamma \subset \R$ satisfies $0 <|\Gamma| <1 $. Define the  set $\Sigma $ such that
\[
\Sigma = \Big{\{} x \in \R : \sup_{r \in (0,1)} \frac{\big| [x-r, x] \cap \Gamma \big|}{r} \geq \sqrt{|\Gamma|} \Big{\}}.
\]
Then it holds 
\[
|\Sigma| \leq  C \sqrt{|\Gamma|}
\]
for a  constant $C\geq 1$. 
\end{lemma}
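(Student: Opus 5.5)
This is a one-sided Hardy--Littlewood maximal function estimate, and I would prove it by reducing to the weak-type $(1,1)$ bound via a Vitali-type (or rising-sun) covering argument in one dimension. Write $\mu = |\Gamma|$ and $\lambda = \sqrt{\mu}$. For every $x \in \Sigma$, the supremum defining $\Sigma$ is at least $\lambda$. Hence for any $\theta<1$ there is $r_x \in (0,1)$ with
\[
\big|[x-r_x,x]\cap\Gamma\big| \geq \theta\lambda r_x .
\]
Taking $\theta = \tfrac12$ makes this strict and removes any issue with the supremum not being attained; it only changes constants.

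First I would localize. If $x\in\Sigma$ then $[x-1,x]$ meets $\Gamma$ in positive measure, so $\Sigma$ lies in the $1$-neighbourhood of the essential support of $\Gamma$. This set need not be bounded, so instead of using boundedness I argue with compact subsets. It suffices to show $|K| \le C\sqrt{\mu}$ for every compact $K\subset\Sigma$; measurability of $\Sigma$ is not an issue, since otherwise the same argument bounds the outer measure. Cover $K$ by the open intervals $I_x=(x-2r_x,\,x+r_x)$, which contain $x$, and extract a finite subcover. Apply the one-dimensional Vitali covering lemma to the intervals $J_x=[x-r_x,x]\subset I_x$. This gives disjoint intervals $J_{x_1},\dots,J_{x_m}$ such that the dilates $5\cdot$ (with respect to their centres) cover $\bigcup_x I_x \supset K$. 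The dilation factor must be enlarged slightly, since $I_x$ has length $3r_x$ while $J_x$ has length $r_x$; a constant like $7$ suffices, and it just affects $C$.

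Then I would estimate
\[
|K| \le 7\sum_{i} r_{x_i} \le \frac{7}{\theta\lambda}\sum_i |J_{x_i}\cap\Gamma| \le \frac{7}{\theta\lambda}|\Gamma| = \frac{14\,\mu}{\sqrt{\mu}} = 14\sqrt{\mu},
\]
using disjointness of the $J_{x_i}$. This gives $|\Sigma|\le C\sqrt{|\Gamma|}$. The hypothesis $|\Gamma|<1$ is not really needed beyond making the statement meaningful, and $r<1$ only ensures the intervals are bounded. A cleaner alternative would be the rising-sun lemma applied to $F(x)=|\Gamma\cap(-\infty,x]|-\lambda x$, which gives the sharp constant $C=1$ for the one-sided maximal function, but the covering argument is more robust.

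The only delicate point is the covering step: the intervals are one-sided, so the Vitali dilation must be chosen to cover the point $x$ itself as well as the interval $J_x$ (which has $x$ as an endpoint), rather than naively dilating $J_x$. Everything else is routine.
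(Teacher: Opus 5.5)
Your argument is correct, but it takes a more hands-on route than the paper. The paper never works with the one-sided intervals directly. It observes that $[x-r,x]\subset[x-r,x+r]$, so that $\frac1r\big|[x-r,x]\cap\Gamma\big|\le 2M\chi_\Gamma(x)$, where $M$ is the centered Hardy--Littlewood maximal function. Hence $\Sigma\subset\{M\chi_\Gamma>\sqrt{|\Gamma|}/4\}$, and the paper then simply quotes the weak-type $(1,1)$ inequality.

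You instead reprove the weak-type bound for the one-sided operator by a finite Vitali selection on the intervals $J_x=[x-r_x,x]$. This forces you to handle the ``delicate point'' that the dilates must also cover the endpoints $x$. Your fix is exactly what is needed: take the greedy, largest-first selection, so that each $J_x$ meets a selected $J_{x_i}$ with $r_{x_i}\ge r_x$, and therefore $I_x=3J_x\subset 5J_{x_i}$.

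One small correction on measurability. Approximating by compact subsets controls the inner measure, not the outer one. The right justification is that $x\mapsto\sup_{r\in(0,1)}\frac1r\big|[x-r,x]\cap\Gamma\big|$ is lower semicontinuous, so $\Sigma$ is a $G_\delta$ set and inner regularity applies.

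As for what each route buys: the paper's pointwise domination makes the one-sided covering issue disappear at the cost of a factor $2$ and a citation of the maximal theorem. Your version is self-contained and gives an explicit constant. The rising-sun alternative you mention would even give $C=1$.
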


\begin{proof}
Denote $\sigma = |\Gamma| >0$. Denote also  $I_r(x) = [x-r, x+r]$ and the centered maximal function by
\[
M \chi_\Gamma(x) = \sup_{r >0} \frac{1}{2r} \int_{I_r(x)}  \chi_\Gamma(y) \, dy.  
\]
By the weak type estimate (see e.g. \cite[Section 7, p. 60]{CC}) it holds 
\begin{equation} \label{eq:weak-type}
\big| \{x \in \R :  M \chi_\Gamma(x) > \lambda \}\big| \leq \frac{C}{\lambda} \| \chi_\Gamma\|_{L^1} =  \frac{C}{\lambda}  \sigma
\end{equation}
for all $\lambda>0$ and for a constant $C$. We use \eqref{eq:weak-type} with $\lambda = \frac{\sqrt{\sigma}}{4}$ and have 
\begin{equation} \label{eq:weak-type-1}
\big| \{x \in \R :  M \chi_\Gamma(x) >\frac{\sqrt{\sigma}}{4} \}\big| \leq  4C \sqrt{\sigma}. 
\end{equation}

Let us show that 
\begin{equation} \label{lem:measure-lemma-1}
\Sigma  \subset  \{x \in \R :  M \chi_\Gamma(x) >\frac{\sqrt{\sigma}}{4} \}.  
\end{equation}
Fix $x \in \Sigma$. By the definition of $\Sigma$ there is $r \in (0,1)$  such that $r^{-1}\big| [x-r, x] \cap \Gamma \big| > \tfrac12 \sqrt{\sigma}$. Then it holds 
\[
\frac{\sqrt{\sigma}}{2} < \frac{\big| [x-r, x] \cap \Gamma \big|}{r} \leq  \frac{1}{r}  \int_{I_r(x)}  \chi_\Gamma(y) \, dy \leq 2    M \chi_\Gamma(x).
\]
Hence, we have \eqref{lem:measure-lemma-1}. The claim follows from \eqref{eq:weak-type-1} and \eqref{lem:measure-lemma-1}.
\end{proof}

For $T>1$ we  define $\Gamma_T$ as in \eqref{def:gammaT} and define $\Sigma_T$ as
\begin{equation}
\label{def:sigmaT}
\Sigma_T :=  \Big{\{} t \geq T+1 :  \sup_{r \in (0,1)} \frac{\big| [t-r^2, t] \cap \Gamma_T \big|}{r^2} \geq \sqrt{|\Gamma_T|} \Big{\}},
\end{equation}
if $\Gamma_T$ is non-empty. If $\Gamma_T$ is an empty set, then we may choose $\Sigma_T = \emptyset$.  As we observed above, the set $\Gamma_T$ has measure $|\Gamma_T| \leq C e^{-\frac{c_1}{2}T}$ and therefore by Lemma \ref{lem:lebesgue} the set $\Sigma_T$ has measure 
\begin{equation}
\label{eq:measure-sigmaT}
|\Sigma_T| \leq C e^{-\frac{c_1}{4}T}.
\end{equation}

Let $\eps_0 >0$ be as in Theorem \ref{thm:decay-flatness} and assume still that  $n=2$. We claim that there is  $T_0$ such that  for all $t_0 \in [T_0+1,\infty) \setminus \Sigma_{T_0}$ it holds
\begin{equation}
\label{eq:density-sigmaT}
\inf_{r \in (0,1)} \frac{1}{r^2} \Big|\{ t \in [t_0-r^2,t_0] : \frac14 \| \mathrm{H}_{E^{h_n}(t) }\|_{L^2}^2 \leq 5 \pi  \} \Big| \geq 1 - \eps_0.
\end{equation}
We simplify the notation by $h_n =h$. Assume that $t_0 \geq T_0+1$ is such that  \eqref{eq:density-sigmaT} fails. Then there is $r \in (0,1)$  such that 
\[
\big|\{ t \in [t_0-r^2,t_0] : \frac14 \| \mathrm{H}_{E^{h}(t) }\|_{L^2}^2 > 5 \pi  \} \big| \geq  \eps_0 \, r^2 . 
\]
If $t \geq T_0$ is such that $ \frac14 \| \mathrm{H}_{E^{h}(t) }\|_{L^2}^2 > 5 \pi $, then by \eqref{def:gammaT-2} we conclude that $t \in \Gamma_{T_0}$ when $T_0$ is large. Therefore the above inequality yields 
\[
\big| [t_0-r^2,t_0] \cap \Gamma_{T_0} \big| \geq  \eps_0\, r^2. 
\]
Since $|\Gamma_T| \leq C e^{-\frac{c_1}{2}T}$ for every $T$, it holds $|\Gamma_{T_0}| \leq \eps_0^2$ when $T_0$ is large. We thus have 
\[
\big| [t_0-r^2,t_0] \cap \Gamma_{T_0} \big| \geq  \sqrt{|\Gamma_{T_0}| } \,  r^2
\]
for some $r \in (0,1)$, which in turn implies $t_0 \in \Sigma_{T_0}$ by definition \eqref{def:sigmaT}. Hence, we have \eqref{eq:density-sigmaT}. 

In the planar case, if $|E^{h}(t)| = |B_1|$ and $P(E^{h}(t)) \leq 2 \pi \, \sqrt{2} -\delta_0$ the definitions \eqref{def:gammaT} and \eqref{def:sigmaT}  imply  immediately that  the condition \eqref{eq:density-point-pl}
holds for  every $t \in [T_0,\infty)\setminus \Sigma_{T_0}  $, when $T_0$ is large. Moreover, for such $t$  we have by \cite[Proposition 2.1]{JuMoPoSpa} that 
\[
| P(E^{h}(t)) - 2 \pi| \leq C \|\kappa_{E^{h}(t)}-  \bar \kappa_{E^{h}(t)}\|_{L^2}. 
\]
Therefore we may estimate the Lagrange multipliers by the Gauss-Bonnet theorem, i.e.,  $\int \kappa_{E^{h}(t)} \, d \Ha^1 = 2 \pi$ as
\begin{equation} \label{eq:gammaT-pl}
|\lambda^{h}(t) -1| \leq   C  e^{-\frac{c_1}{4}t}  \qquad \text{for all }\, t \in [T_0,\infty)\setminus \Sigma_{T_0}.  
\end{equation}

We conclude this section by recalling the following estimate for the approximative flat flow from \cite{JN2}. We formulate the result in a slightly different way as it is stated in \cite{JN2}. 
\begin{proposition}
\label{prop:JN2}
Let  $\{E^{h}(t)\}_{t\geq 0}$ be  an approximative flat flow in $\R^{n+1}$ starting from a bounded set of finite perimeter. If the set $E^{h}(t_0)$ satisfies uniform ball condition with radius 
$r_0 \in (0,1)$, then there is $\delta>0$, depending on $r_0, |E^{h}(t_0)|$ and the dimension, such that all sets $E^{h}(t)$ with $t \in [t_0, t_0 + \delta]$ satisfy  uniform ball condition with radius 
$r_0/2$.  Moreover, the flow is smoothing which means that   for every $k \in \N$ there is $C_k$ such that 
\[
\sup_{t \in [t_0+ \frac{\delta}{2},  t_0 + \delta] } \|\Delta_{\partial E^{h}(t)}^k \mathrm{H}_{E^{h}(t) }\|_{L^2} \leq C_k. 
\] 
\end{proposition}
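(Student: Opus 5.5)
The plan is to derive Proposition \ref{prop:JN2} directly from the short-time analysis of the minimizing movement scheme in \cite{JN2}, and only to translate their statements into the present formulation. In \cite{JN2} the Cauchy problem for \eqref{eq:VMCF} with $C^2$-regular initial set is treated exactly through the discrete scheme \eqref{def:min-prob}. The discrete estimates there are uniform in $h$, and the smooth classical solution is obtained as their limit. So the first step is to verify that the hypothesis of \cite{JN2} is our uniform ball condition. A set satisfying the uniform ball condition with radius $r_0$ is $C^{1,1}$ with $|B_E|\le r_0^{-1}$, which is the regularity class in which \cite{JN2} measure the initial datum. I will also check that the constants there depend only on $r_0$, the volume $|E^h(t_0)|$ and the dimension. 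The perimeter is controlled by the volume and $r_0$ through the ball condition, so it does not enter as an extra parameter. Finally, because the scheme is autonomous, restarting it at $t_0$ with initial set $E^h(t_0)$ reproduces $\{E^h(t)\}_{t\ge t_0}$, so we may assume $t_0=0$.

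The second step is the propagation of the ball condition. I would follow the barrier argument of \cite{JN2}. Using the Euler--Lagrange equation \eqref{eg:Euler-Lag}, one compares $E^h_k$ with slightly shrunk interior and exterior balls tangent to $\partial E^h_{k-1}$, and shows that the admissible radius degrades by at most $Ch$ per step as long as $|\lambda^h|$ stays bounded. On such a time interval the curvature and the Lagrange multiplier are bounded. The multiplier satisfies $\lambda^h = \fint (\mathrm{H}+d/h)$, which is controlled once the ball condition gives a curvature bound. A continuity (bootstrap) argument in $k$ then yields radius $\ge r_0/2$ for $kh\le\delta$, with $\delta$ depending only on $r_0$, the volume and the dimension.

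The third step is the smoothing estimate. Under the uniform ball condition, each $\partial E^h(t)$ is locally a $C^{1,1}$ graph over a fixed scale $r_0/4$ around any of its points, with uniform bounds. This gives a discrete parabolic equation for graph functions $u_k$ of the form $(u_k-u_{k-1})/h = \text{(quasilinear elliptic operator in } u_k) + \lambda^h_k$, with uniformly elliptic coefficients. Discrete Schauder or energy estimates, differentiated repeatedly in space, with the usual time cutoff supported in $[\delta/2,\delta]$, give $h$-independent bounds on all spatial derivatives for $t\in[\delta/2,\delta]$. In particular they bound $\|\Delta^k_{\partial E^h(t)}\mathrm{H}_{E^h(t)}\|_{L^2}$.

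The main obstacle is precisely this uniformity in $h$ for higher derivatives. It is where the Lagrange multiplier, which is only controlled in $L^\infty$ after step two, must be shown to be harmless. Since $\lambda^h$ is constant in space, it disappears when the equation is differentiated in space, so spatial derivative estimates are unaffected. I would take this point, together with the discrete energy estimates for $\Delta^k \mathrm{H}$ established in \cite{JN2}, as the route, rather than redo the computation here.
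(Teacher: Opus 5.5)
Your proposal follows the paper's approach: the paper gives no independent proof of this proposition and simply quotes it from \cite{JN2} in reformulated form, so your reduction to $t_0=0$ and your outline of how the ball condition propagates and how the smoothing estimates arise are a summary of results proved there. One step in your sketch is misstated, though it is again something you defer to \cite{JN2}: the curvature bound alone does not bound $\lambda^h=\fint(\mathrm{H}+d/h)$, because the a priori estimate $|d|\le C\sqrt{h}$ only gives $|\fint d/h|\le C/\sqrt{h}$; bounding $\fint d/h$ uniformly in $h$ requires using the volume constraint $|E^h_k|=|E^h_{k-1}|$.
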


\section{Weak Harnack inequality}

As we already mentioned,  we will prove Theorem \ref{thm:decay-flatness} using methods from regularity theory for  fully nonlinear PDEs. This idea goes back to Savin \cite{Sa} in the elliptic case, while the parabolic case is due to Wang \cite{Wa}. 
In our case  the time is discrete and  we need to revisit the proof of the weak Harnack inequality by Wang. In this section we prove the crucial ABP estimate, which is the technical core of the proof.

We need to perturb the functions $v_r^\pm$ and define  $w_r^\pm: Q_1^- \to \R$ as
\begin{equation}
\label{def:w-r}
\begin{split}
&w_r^-(y, \ft_k) :=  v_r^-(y, \ft_k)-  r^{-2\alpha} \int_{r^2 \ft_k}^0 |\lambda^h(\tau +t_0+h)|\, d \tau \quad \text{and}\\ 
&w_r^+(y, \ft_k) :=  v_r^+(y, \ft_k) +  r^{-2\alpha} \int_{r^2 \ft_k}^0  |\lambda^h(\tau +t_0+h)|\, d \tau,
\end{split}
\end{equation}
and $w_r^\pm(y, \mathfrak{t}) := w_r^\pm(y, \mathfrak{t}_k)$ for $\mathfrak{t} \in [\mathfrak{t}_k, \mathfrak{t}_{k+1})$. The crucial nonlinear estimate 
is the weak Harnack inequality for $w_r^\pm$ defined in \eqref{def:w-r}. Since the argument is symmetric we only prove it for $w_r^-$.


 It turns out that $w_r^-$ is almost  a viscosity supersolution of the heat equation when $r$ is small, and we prove this in Lemma \ref{lem-visco1} below. We need to be careful when we use the terminology from the  viscosity theory, in particular, we need to define what it means to `touch from below'. 
\begin{definition}
\label{def:touch-from-below}
Let $w_r^-: Q_1^- \to \R$ be as in \eqref{def:w-r}.  We say that $\varphi \in C^2(Q_1^-)$ \emph{touches $w_r^-$ from below at $(y,\mathfrak{t}_k)$} if 
\[
\begin{split}
& w_r^-(\tilde y,\mathfrak{t}_j)  \geq  \varphi(\tilde y,\mathfrak{t}_j) \quad \text{for all }\, \tilde y \in B_1,  \, j \leq k-1, \quad \text{and}\\ 
& w_r^-(y,\mathfrak{t}_k)  - \varphi(y,\mathfrak{t}_k) = \min_{\tilde y \in B_1}\big( w_r^-(\tilde y,\mathfrak{t}_k) - \varphi(\tilde y,\mathfrak{t}_k) \big)\leq 0. 
\end{split}
\]
\end{definition}
We stress that the definition for touching from below is stated only  for discrete times, which changes the nature of  the proof of the   ABP-estimate. Definition \ref{def:touch-from-below} is flexible in the sense that we have to define the test function $\varphi$ only at discrete times  $\mathfrak{t}_k$. 

Touching from  below the   function $u_-$ from  \eqref{def:sub-super} is defined similarly. If  $\varphi  \in C^2$ touches $u_-$ from below at a point $(x, t_k)$ then by the regularity theory for perimeter minimizers  the boundary $E^h(t_k)$ is regular near $(x,u_-(x,t_k)) \in \partial E^h(t_k)$ in every dimension  \cite[Lemma 3]{DM}. In particular, we may write the mean curvature as, writing $u = u_-$ for short, 
\begin{equation}
\label{eq:mean-curv}
 \mathrm{H}_{E^h(t_k)}\big(x,u(x, t_k) \big)  = - \frac{1}{\sqrt{1 + |\nabla u(x, t_k)|^2} } \trace\left( \Big( I - \frac{\nabla u(x, t_k) \otimes \nabla u(x, t_k)}{1 + |\nabla u(x, t_k)|^2} \Big) \nabla^2 u(x, t_k)  \right). 
\end{equation}


We will repeatedly use the following technical lemma. 
\begin{lemma}
\label{lem-technic}
Let $w_r^-$ be as in \eqref{def:w-r}.
Assume $\varphi\in C^2(Q_1^-)$ is such that $r^\alpha \|\varphi\|_{C_x^{2}} \leq 1$  and it  touches  $w_r^-$ from below at $(y, \ft_k) \in Q_{\frac78}^-$. Then there is a point $\hat y$ with 
$|\hat y- y|\leq \hat C \sqrt{ \mathfrak{h}}$ and a non-negative number $\eta_r \leq C r^2$ such that 
\[
 \Delta \varphi(y, \ft_k)  \leq  Cr^{2-\alpha}  +  (1-  \eta_r)  \frac{\varphi(\hat y, \ft_k) -   \varphi(\hat y, \ft_{k-1}) }{ \mathfrak{h}}.  
\]
\end{lemma}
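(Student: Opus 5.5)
The plan is to transfer the touching from the rescaled function $w_r^-$ back to the original subgraph $u_-$ at the discrete times $t_{k-1}$ and $t_k$, and then to combine the Euler--Lagrange equation \eqref{eg:Euler-Lag} at $t_k$ with the distance term $d_{E^h(t_{k-1})}$, which compares the two consecutive sets.

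First I undo the scaling \eqref{def:v-r}, \eqref{def:w-r}. I define
\[
\psi(x,t_j) = r^{2+\alpha}\varphi(x/r,\ft_j) + P(x,t_j) + \Lambda(t_j) + r^{2}\int_{t_j-t_0}^0|\lambda^h(\tau+t_0+h)|\,d\tau .
\]
Definition \ref{def:touch-from-below} then says that $\psi(\cdot,t_k)$ touches $u_-(\cdot,t_k)$ from below at $x=ry$, and that $\psi(\cdot,t_{k-1}) \le u_-(\cdot,t_{k-1})$. Since $r^\alpha\|\varphi\|_{C^2_x}\le1$, we have $|\nabla^2\psi| \le |A| + r^{\alpha}\cdot r^{-\alpha}$, which is bounded. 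By \eqref{eq:restate-flatness}, $u_-$ is $r^{2+\alpha}$-close to $P+\Lambda$, so at scale $r$ the gradient of $\psi$ at the contact point is $O(r)$. By \cite[Lemma 3]{DM}, $\partial E^h(t_k)$ is a $C^2$ graph near $(x,u_-(x,t_k))$. Formula \eqref{eq:mean-curv} together with the comparison principle for mean curvature at a touching point gives
\[
\mathrm{H}_{E^h(t_k)} \le -\frac{1}{\sqrt{1+|\nabla\psi|^2}}\trace\big((I-\tfrac{\nabla\psi\otimes\nabla\psi}{1+|\nabla\psi|^2})\nabla^2\psi\big).
\]
Because $|\nabla\psi|=O(r)$, this equals $-\Delta\psi + O(r^2)$.

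Next I handle the distance term. By \eqref{eg:Euler-Lag},
\[
d_{E^h(t_{k-1})}(z) = h\big(\lambda^h(t_k) - \mathrm{H}_{E^h(t_k)}(z)\big), \qquad z=(x,u_-(x,t_k)).
\]
The key geometric step is to find the nearest point $\hat z$ of $\partial E^h(t_{k-1})$ to $z$. Proposition \ref{prop:apriori-est}(i) gives $|z-\hat z|\le C\sqrt h$, hence in rescaled variables $|\hat y - y|\le \hat C\sqrt{\fh}$. The subgraph $\{x_{n+1}<\psi(\cdot,t_{k-1})\}$ lies below $\partial E^h(t_{k-1})$, since that boundary lies above $u_-$ in the cylinder. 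I then compare distances: the distance from $z$ to the set $\{x_{n+1}\le\psi(\cdot,t_{k-1})\}$ bounds $d_{E^h(t_{k-1})}(z)$ from above, in the sense that $z$ lies at least as far outside as $z$ is above the graph of $\psi(\cdot,t_{k-1})$. That distance is a vertical difference $\psi(\hat x,t_k)-\psi(\hat x,t_{k-1})$ evaluated at the foot point $\hat x$, multiplied by the cosine factor $(1+|\nabla\psi|^2)^{-1/2} = 1-\eta_r$ with $\eta_r = O(r^2)$. The errors from replacing $\psi(x,t_k)$ by $\psi(\hat x,t_k)$ are controlled by the second derivatives and by $|x-\hat x|\le C\sqrt h$. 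This yields
\[
-\mathrm{H} + \lambda^h(t_k) \ge (1-\eta_r)\frac{\psi(\hat x,t_k)-\psi(\hat x,t_{k-1})}{h} - C r^{2}\cdot(\text{small}).
\]

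Finally I expand $\psi$. The caloric polynomial gives $\trace A$ in both $\Delta P$ and $\partial_t P$, and these cancel up to $\eta_r|b|=O(r^2)$. The $\Lambda$ increment equals $\lambda^h(t_k)h$, which cancels the Lagrange multiplier; the leftover $\eta_r\lambda^h h$ is absorbed by the added $|\lambda^h|$ integral term. That term has the favourable sign: its increment is $-r^2|\lambda^h(t_k)|h$ at rescaled size, which dominates $\eta_r|\lambda^h|$ since $\eta_r\le Cr^2$ — this is precisely why $w_r^-$ was introduced. Dividing by $r^{\alpha}$ after the rescaling $\Delta_x\psi = r^\alpha\Delta\varphi + \trace A$ gives $\Delta\varphi \le Cr^{2-\alpha} + (1-\eta_r)\,\text{(difference quotient of }\varphi)$.

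The main obstacle is the geometric step. I must show rigorously that the signed distance at $z$ is bounded by the vertical gap measured at the correct shifted point $\hat y$, with only an $O(r^2)$ multiplicative defect and no additive $O(\sqrt h)$ gradient error. I would first try an exact computation using the tangent-plane (normal) direction of the smooth graph of $\psi(\cdot,t_{k-1})$: take $\hat z$ as the foot of the normal, so that the first-order term cancels exactly.
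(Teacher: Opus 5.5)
Your skeleton is the paper's. You undo the scaling to get $\psi$ touching $u_-$ from below, compare mean curvatures at the contact point, use the Euler--Lagrange equation, and absorb $\eta_r\lambda^h(t_k)h$ with the $|\lambda^h|$-term built into $w_r^-$. Your observation that the subgraph of $\psi(\cdot,t_{k-1})$ lies inside $E^h(t_{k-1})$, so that $d_{E^h(t_{k-1})}$ is bounded by the signed distance to that subgraph, is also correct. However, the one nontrivial step, the estimate of $d_{E^h(t_{k-1})}(z)$, is never carried out, and what you write for it has three problems.

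\emph{Direction.} The comparison principle already gives the lower bound $-\mathrm{H}_{E^h(t_k)}(z)\ge \trace A + r^\alpha\Delta\varphi(y,\ft_k)-Cr^2$. You therefore need an upper bound $d_{E^h(t_{k-1})}(z)\le(1-\eta_r)\big(\psi(\hat x,t_k)-\psi(\hat x,t_{k-1})\big)$. Your displayed inequality with ``$\ge$'' is a second lower bound on $-\mathrm{H}$ and yields nothing.

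\emph{Order of the shift error.} Replacing $\psi(x,t_k)$ by $\psi(\hat x,t_k)$ is not controlled by second derivatives. The error is first order, of size $|\nabla\psi(\cdot,t_k)|\,|x-\hat x|\le Cr\sqrt h$. After dividing by $h\,r^\alpha$ it becomes $Cr^{1-\alpha}/\sqrt h\ge c\,h^{-\alpha/2}$, far larger than the admissible error $Cr^{2-\alpha}$. So it cannot be treated additively.

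\emph{Wrong case.} Your cosine-factor picture has $z$ above the graph of $\psi(\cdot,t_{k-1})$, which is the easy case. If $d_{E^h(t_{k-1})}(z)\ge 0$, you simply bound it by the vertical distance $u(x,t_k)-u(x,t_{k-1})\le\psi(x,t_k)-\psi(x,t_{k-1})$, with $\hat x=x$ and $\eta_r=0$. The real case is $d_{E^h(t_{k-1})}(z)<0$, where you need a \emph{lower} bound on $|d|$. Your last paragraph leaves exactly this open, with only a tentative plan.

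The missing idea is short. When $d<0$, let $(\hat x,\sigma)\in\partial E^h(t_{k-1})$ realize the distance, so that $|d|=\sqrt{|x-\hat x|^2+|u(x,t_k)-\sigma|^2}\le C\sqrt h$. We have $\sigma\ge u_-(\hat x,t_{k-1})\ge\psi(\hat x,t_{k-1})$ and $u(x,t_k)=\psi(x,t_k)$. Hence either $\psi(\hat x,t_k)\ge\psi(\hat x,t_{k-1})$, and the claim is trivial, or, by Cauchy--Schwarz,
\[
0<\psi(\hat x,t_{k-1})-\psi(\hat x,t_k)\le |u(x,t_k)-\sigma|+Cr|x-\hat x|\le\sqrt{1+C^2r^2}\,|d| .
\]
The first-order horizontal error is thus absorbed \emph{multiplicatively}, because $|x-\hat x|$ is itself a component of the Euclidean distance. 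This gives $1-\eta_r=(1+Cr^2)^{-1}$ with no additive term.

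Your normal-foot idea can also be made to work. Writing $z-\hat z=s(-\nabla\psi(\hat x,t_{k-1}),1)$, the first-order term becomes $s\,\nabla\psi(\xi,t_k)\cdot\nabla\psi(\hat x,t_{k-1})$, which is of order $r^2|s|$. It is heavier, though, and as written it is only a plan.

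Finally, a bookkeeping error. The $|\lambda^h|$-integral enters $\psi$ with coefficient $r^{2+\alpha}\cdot r^{-2\alpha}=r^{2-\alpha}$, not $r^2$. This matters: the absorption $\eta_r|\lambda^h(t_k)|h\le(1-\eta_r)r^{2-\alpha}|\lambda^h(t_k)|h$ relies on $\eta_r\le Cr^2\ll r^{2-\alpha}$. With coefficient $r^2$, your domination claim would require $C\le 1$.
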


\begin{proof}
Let us simplify the notation as $w_r = w_r^-$,  $v_r = v_r^-$ and $u = u_-$. Moreover,  by adding a constant to $\varphi $ if needed, we may assume that $w_r(\tilde y, \ft_j) \geq \varphi(\tilde y, \ft_j)$ for all $\tilde y \in B_{1}$, $ \ft_j \leq \ft_k$  and $w_r(y,\ft_k) = \varphi(y,\ft_k)$.  It follows from the definitions of $v_r$ \eqref{def:v-r}, $w_r$ \eqref{def:w-r} and from the rescaling of the coordinates  \eqref{rescale}  that for $t = t_k = kh$
\[
u(x,t) = P(x,t) + \Lambda(t) + r^{2-\alpha}  \int_{t}^{t_0} |\lambda^h(\tau +h)|\, d \tau   + r^{2+\alpha} w_r\big(\frac{x}{r}, \frac{t- t_0}{r^2}\big),
\]  
where $P(x,t) = c + bt + \frac12  A x \cdot x $  with $ b = \trace(A)$. Therefore the function $\psi : Q_r^-(0,t_0) \to \R$,
\begin{equation}
\label{def:visco-psi}
\psi(x,t) :=  P(x,t) + \Lambda(t) + r^{2-\alpha}  \int_{t}^{t_0} |\lambda^h(\tau +h)|\, d \tau   + r^{2+\alpha} \varphi\big(\frac{x}{r}, \frac{t- t_0}{r^2}\big),
\end{equation}
touches $u$ from below at $(x, t_k) = (r y, r^2\ft_k +t_0) $, i.e., $ u(\tilde x, t_j) \geq \psi(\tilde x,t_j) $ for all $\tilde x \in B_r$, $j \leq k-1$ and $ u(x, t_k) = \psi(x,t_k)$. Since   $\nabla P(x,t) = Ax$ and $x = ry$  we have  
\begin{equation}
\label{eq:gradient-max}
\begin{split}
&\nabla u(x,  t_k)  =\nabla \psi(x,  t_k) =   r^{1+\alpha} \nabla \varphi(y, \ft_k) + r A y  \quad \text{and} \\
&\nabla^2 u(x, t_k)\geq \nabla^2 \psi(x,  t_k)  = r^{\alpha} \nabla^2 \varphi(y, \ft_k) +A. 
\end{split}
\end{equation}
Moreover, since  $r^\alpha \|\varphi\|_{C_x^{2}} \leq 1$ we have  
\begin{equation}
\label{eq:gradient-bound}
\|\nabla \psi(\cdot ,  t_k)\|_{C^0} \leq r^{1+\alpha}\|\nabla \varphi\|_{C^0}  + r |A|  \leq (1+C_0) r.  
\end{equation}
Therefore we conclude using \eqref{eq:mean-curv},  \eqref{eq:gradient-max}, \eqref{eq:gradient-bound}  and  $r^\alpha \|\varphi\|_{C_x^{2}} \leq 1$   that 
\begin{equation}
\begin{split}
\label{eq:EL-RHS}
- \mathrm{H}_{E^h(t_k)}&\big(x,u(x,t_k) \big)\\
&\geq  \frac{1}{\sqrt{1 + | \nabla \psi(x,t_k)|^2} } \trace\left( \Big( I - \frac{\nabla \psi(x,  t_k) \otimes \nabla \psi(x,  t_k)}{1 + | \nabla \psi(x,  t_k)|^2} \Big) \nabla^2 \psi(x,  t_k) \right)\\
&\geq  \trace A  + r^\alpha \Delta \varphi(y, \ft_k) - Cr^2 ( 1 + r^\alpha \|\varphi\|_{C_x^{2}})\\
& \geq      \trace A  + r^\alpha   \Delta \varphi(y, \ft_k) -  Cr^2, 
\end{split}
\end{equation}
for a constant $C$.  

Let us next estimate the LHS of the Euler-Lagrange equation \eqref{eg:Euler-Lag}. Here the difficulty is the implicit definition of  the distance function $d_{E^h(t_{k-1})}$. In order to deal with this, we claim that there exists a point $\hat x$, with $|\hat x - x|\leq C\sqrt{h}$, and a small number $\eta_r$,  with $0 \leq \eta_r \leq C r^2$, such that 
\begin{equation}
\label{eq:distance}
d_{E^h(t_{k-1})}\big(x,u(x,t_k) \big) \leq (1-  \eta_r)\big( \psi(\hat x, t_k) - \psi(\hat x, t_{k-1}) \big). 
\end{equation}
 We divide the argument  for \eqref{eq:distance}  in  two cases. 

Assume first that  $d_{E^h(t_{k-1})}\big(x,u(x,t_k) \big)\geq 0$. In this case we may estimate the geometric distance $d_{E^h(t_{k-1})}$ from above by the vertical distance between the points  $(x,u(x,t_k)) $ and $(x,u(x,t_{k-1}))$ and have 
\[
d_{E^h(t_{k-1})}\big(x,u(x,t_k) \big) \leq u(x,t_k) - u(x,t_{k-1}). 
\]
Since $\psi$ touches $u$ from below at $(x,t_k)$, we have $ u(x,t_k) =  \psi(x,t_k)$ and $ u(x,t_{k-1}) \geq \psi(x, t_{k-1})$. Therefore we have \eqref{eq:distance} for $\hat x = x$ and $\eta_r = 0$. 

Let us then assume that  $d_{E^h(t_{k-1})}\big(x,u(x,t_k) \big) < 0$.  By Proposition \ref{prop:apriori-est} it holds 
\[
|d_{E^h(t_{k-1})}\big(x,u(x,t_k) \big)|\leq C \sqrt{h}.
\]
  Therefore  we may choose a point $\hat x$ and a number $\sigma$ with $(\hat x, \sigma) \in \partial E^h(t_{k-1})\cap B_{C \sqrt{h}}\big(x,u(x,t_k) \big)$ such that 
\begin{equation}
\label{eq:el-LHS1}
d_{E^h(t_{k-1})}\big(x,u(x,t_k)\big) = - \sqrt{|x-\hat x|^2 + |u(x,t_k) -\sigma|^2}. 
\end{equation}
We may assume that  $\psi(\hat x, t_k) < \psi(\hat x, t_{k-1})$ since otherwise \eqref{eq:distance} is trivially true. Then it follows from the fact that $u(\cdot, t_{k-1})$ is the subgraph of $E^h(t_{k-1})$ and from $u(\hat x, t_{k-1})\geq \psi(\hat x, t_{k-1})$ that 
$\sigma \geq  \psi(\hat x, t_{k-1})$.  Therefore by $ u(x,t_k) =  \psi(x,t_k)$ and by \eqref{eq:gradient-bound} it holds 
\begin{equation}
\label{eq:el-LHS2}
\begin{split}
0 < \psi(\hat x, t_{k-1}) - \psi(\hat x, t_k) &\leq\big( \sigma -u(x,t_k)\big)  +  \big( \psi(x,t_k) - \psi(\hat x, t_k)  \big)\\ 
&\leq |u(x,t_k) - \sigma | + Cr|x - \hat x|. 
\end{split}
\end{equation}
Denote $\bar a = |u(x,t_k) - \sigma |$, $\bar b = |x - \hat x|$ and $\eps = Cr$. By Young's inequality 
\[
(\bar a +\eps  \bar b)^2 = \bar a^2 +2 \eps \bar a \bar b +\eps^2  \bar b^2 \leq (1+\eps^2)  \bar a^2 + (1+\eps^2)  \bar b^2 \leq (1+\eps^2)^2(\bar a^2+ \bar b^2). 
\]
Therefore by the above and by  \eqref{eq:el-LHS1} we have 
\[
\begin{split}
 |u(x,t_k) - \sigma | + Cr|x - \hat x| &\leq (1+Cr^2) \sqrt{|u(x,t_k) -\sigma|^2 + |x-\hat x|^2} \\
&= - (1+Cr^2) d_{E^h(t_{k-1})}\big(x,u(x,t_k)\big). 
\end{split}
\]
This together with \eqref{eq:el-LHS2}    implies \eqref{eq:distance}.

We proceed by using \eqref{eq:distance}, the definition of $\psi$ in \eqref{def:visco-psi}, the facts that $P(\hat x,t_k) - P(\hat x,t_{k-1}) = b h$, $\Lambda(t_k) - \Lambda(t_{k-1}) = \lambda^h(t_k) h$ and have for $r$ small enough that 
\[
\begin{split}
d_{E^h(t_{k-1})}&\big(x,u(x,t_k) \big) \leq (1-  \eta_r)\big( \psi(\hat x, t_k) - \psi(\hat x, t_{k-1}) \big) \\
&=  (1-  \eta_r)\Big(   b h + \lambda^h(t_k) h - r^{2-\alpha}  |\lambda^h(t_k)| h  +  r^{2+\alpha}\big( \varphi(\hat x/r, \ft_k) -   \varphi(\hat x/r, \ft_{k-1})   \big) \Big)\\
&\leq  b h + \lambda^h(t_k) h  + Cr^{2}h +   (1-  \eta_r)  r^{2+\alpha}\big( \varphi(\hat x/r, \ft_k) -   \varphi(\hat x/r, \ft_{k-1})   \big) , 
\end{split}
\]
where in the last inequality we used $0\leq \eta_r \leq Cr^2$. By setting $\hat y = \frac{\hat x}{r}$, recalling  that $\mathfrak{h} = \frac{h}{r^2}$ and $b =   \trace A$, and by using the Euler-Lagrange equation \eqref{eg:Euler-Lag}  we then conclude using  the above inequality 
\begin{equation}
\label{eq:el-LHS4}
- \mathrm{H}_{E^h(t_k)}\big(x,u(x,t_k) \big) \leq \trace A   + Cr^{2} +   (1-  \eta_r)  r^{\alpha} \frac{\varphi(\hat y, \ft_k) -   \varphi(\hat y, \ft_{k-1}) }{\fh}.  
\end{equation}
The claim follows from \eqref{eq:EL-RHS}, \eqref{eq:el-LHS4} and by recalling that  
 $|\hat x - x|\leq C\sqrt{h}$ and $y = \frac{x}{r}$, we have  $|\hat y - y |\leq  C\sqrt{\mathfrak{h}}$.
\end{proof}

The first consequence of Lemma \ref{lem-technic} is  that the function $w_r^-$ defined in  \eqref{def:w-r} is almost a supersolution of the heat equation when $r$ is small. 
\begin{lemma}
\label{lem-visco1}
Let $w_r^-$ be as  in  \eqref{def:w-r}. Assume $\varphi\in C^2(Q_1^-)$ is such that $r^\alpha \|\varphi\|_{C_x^{2}} \leq 1$  and it  touches  $w_r^-$ from below at $(y, \ft_k) \in Q_{\frac78}^-$. Then it holds
\[
\partial_t \varphi(y, \ft_k) \geq \Delta \varphi(y, \ft_k) - Cr^{2-\alpha} -  \fh \|\partial_{tt}^2 \varphi \|_{C^0} - C \sqrt{\fh} \|\partial_{t} \nabla \varphi \|_{C^0}.
\]

\end{lemma}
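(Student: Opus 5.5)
Lemma \ref{lem-technic} already gives, at the touching point $(y,\ft_k)$ and with the associated $\hat y$ and $\eta_r$,
\[
\Delta \varphi(y,\ft_k) \leq Cr^{2-\alpha} + (1-\eta_r)\,\frac{\varphi(\hat y,\ft_k)-\varphi(\hat y,\ft_{k-1})}{\fh},\qquad |\hat y - y|\leq \hat C\sqrt{\fh},\quad 0\le\eta_r\le Cr^2 .
\]
What remains is to replace the backward difference quotient at $\hat y$ by $\partial_t\varphi(y,\ft_k)$ and to dispose of the factor $(1-\eta_r)$. I will do the first step by two Taylor expansions and the second by a sign split.

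\textbf{Step 1 (time derivative).} Since $\ft_k-\ft_{k-1}=\fh$, Taylor's formula in time with integral remainder gives
\[
\frac{\varphi(\hat y,\ft_k)-\varphi(\hat y,\ft_{k-1})}{\fh} \le \partial_t\varphi(\hat y,\ft_k) + \frac{\fh}{2}\|\partial^2_{tt}\varphi\|_{C^0}.
\]
Here $\hat y$ lies in $B_1$ because $y\in B_{7/8}$ and $\fh$ is small.

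\textbf{Step 2 (spatial shift).} The mean value theorem in space for $\partial_t\varphi(\cdot,\ft_k)$ gives
\[
\partial_t\varphi(\hat y,\ft_k)\le \partial_t\varphi(y,\ft_k)+\hat C\sqrt{\fh}\,\|\partial_t\nabla\varphi\|_{C^0}.
\]
Combining the two steps, the difference quotient is at most $D:=\partial_t\varphi(y,\ft_k)+\fh\|\partial_{tt}^2\varphi\|_{C^0}+C\sqrt{\fh}\|\partial_t\nabla\varphi\|_{C^0}$.

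\textbf{Step 3 (removing $1-\eta_r$).} This is the only delicate point, because multiplying by $1-\eta_r\in[0,1]$ preserves the inequality only when the quantity is nonnegative. I split into cases.
\begin{itemize}
\item If the difference quotient $Q$ is nonnegative, then $(1-\eta_r)Q\le Q\le D$.
\item If $Q<0$, then $(1-\eta_r)Q\le 0$, so $\Delta\varphi(y,\ft_k)\le Cr^{2-\alpha}$. I then need $D\ge0$, or at least $\Delta\varphi\le D+Cr^{2-\alpha}$.
\end{itemize}
In the second case, write $(1-\eta_r)Q=Q-\eta_rQ$ and bound $-\eta_rQ\le Cr^2|Q|$. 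Since the sign of $Q$ is not controlled, the cleanest route is to return to the proof of Lemma \ref{lem-technic}. There $\eta_r>0$ only in the case $\psi(\hat x,t_k)<\psi(\hat x,t_{k-1})$, i.e. $Q<0$ after unwinding the rescaling. In that case \eqref{eq:distance} in fact holds with $(1+Cr^2)$ in place of $(1-\eta_r)$, which is the trivial direction for a negative quantity. So in every case one may take the factor to be at least $1$ when $Q<0$ and equal to $1$ when $Q\ge0$, giving $(1\mp\eta_r)Q\le Q$.

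With this, $\Delta\varphi(y,\ft_k)\le Cr^{2-\alpha}+Q\le Cr^{2-\alpha}+D$, which rearranges to the claimed inequality. I expect this sign issue in Step 3 to be the main (and only) obstacle; Steps 1 and 2 are routine Taylor estimates.
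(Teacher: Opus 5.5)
Your Steps 1 and 2 are exactly the paper's argument: a Taylor expansion in time, then the mean value theorem in space with $|\hat y-y|\le\hat C\sqrt{\fh}$. The paper then concludes directly from Lemma~\ref{lem-technic} without discussing the factor $1-\eta_r$. You are right that this factor needs an argument when $Q:=\fh^{-1}\big(\varphi(\hat y,\ft_k)-\varphi(\hat y,\ft_{k-1})\big)<0$, but your Step 3 resolves it incorrectly.

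Write $d$ for $d_{E^h(t_{k-1})}(x,u(x,t_k))$. The estimate \eqref{eq:el-LHS2} gives $0<\psi(\hat x,t_{k-1})-\psi(\hat x,t_k)\le(1+Cr^2)|d|$, i.e.\ $d\le(1+Cr^2)^{-1}\big(\psi(\hat x,t_k)-\psi(\hat x,t_{k-1})\big)$. Since the bracket is negative, a factor below one \emph{weakens} the bound, and this is the form the geometry actually gives. Having $1+Cr^2$ in place of $(1+Cr^2)^{-1}$ would require the reverse bound $|d|\ge(1+Cr^2)|\psi(\hat x,t_k)-\psi(\hat x,t_{k-1})|$. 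This is false in general: for two parallel planes of slope $s$ at vertical distance $g$, the Euclidean distance is $g/\sqrt{1+s^2}<g$. Since $\psi$ has slope of order $r$, this loss is exactly of order $r^2$, which is why $\eta_r$ appears in \eqref{eq:distance} at all. In addition, the sign of $\psi(\hat x,t_k)-\psi(\hat x,t_{k-1})$ is not the sign of $Q$, because this increment also contains $bh+\lambda^h(t_k)h-r^{2-\alpha}|\lambda^h(t_k)|h$.

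The factor is nevertheless harmless. The missing ingredient is the hypothesis $r^\alpha\|\varphi\|_{C^2_x}\le1$, which your Step 3 never uses.
If $Q\ge0$, then $(1-\eta_r)Q\le Q$.
If $Q<0$, Lemma~\ref{lem-technic} itself gives $(1-\eta_r)|Q|\le Cr^{2-\alpha}-\Delta\varphi(y,\ft_k)\le Cr^{2-\alpha}+nr^{-\alpha}$. With $\eta_r\le Cr^2\le\frac12$ this yields $\eta_r|Q|\le C'r^{2-\alpha}$, hence $(1-\eta_r)Q=Q+\eta_r|Q|\le Q+C'r^{2-\alpha}$.
In both cases $\Delta\varphi(y,\ft_k)\le C''r^{2-\alpha}+Q$, and your Steps 1 and 2 then give the claim.
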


\begin{proof}
We begin by estimating the  last term  on the RHS in the statement of Lemma \ref{lem-technic} by  recalling that $\ft_k - \ft_{k-1} = \fh$  
\[
\frac{\varphi(\hat y, \ft_k) -   \varphi(\hat y, \ft_{k-1})}{\fh}   \leq \pa_t \varphi(\hat y, \ft_k)+ \|\pa_{tt}^2 \varphi\|_{C^0}\fh
\]
and then  using  $|\hat y - y |\leq \hat  C\sqrt{\mathfrak{h}}$
\[
\pa_t \varphi(\hat y, \ft_k) \leq \pa_t \varphi(y, \ft_k) + C  \|\pa_{t}\nabla \varphi\|_{C^0}\sqrt{\mathfrak{h}}. 
\]
The claim follows from the two inequalities above and from  Lemma \ref{lem-technic}.
\end{proof}

\subsection{The ABP-estimate}

We fix $a >0 $. For $\xi \in \R^n$ and $\tau \in \R$, we consider the  parabola centered at $(\xi, \tau) \in \R^{n+1}$ with a slope $a >0$ 
\begin{equation}\label{def:parabola}
p_{\xi, \tau;a}(x,t):=a(t - \tau)-\frac{a}2|x- \xi|^2.
\end{equation}
We often denote $p_{\xi, \tau} = p_{\xi, \tau;a}$ when the slope is clear from the context. 

\begin{remark}
\label{rem:para-algebra}
If we have two parabolas $p_{\xi_1, \tau_1;a_1}$ and $p_{\xi_2, \tau_2;a_2} $  with centers $(\xi_1, \tau_1), (\xi_2, \tau_2)  \in Q_\rho^-(x_0,t_0)$,  then 
 it holds  
\[
p_{\xi_1, \tau_1;a_1} + p_{\xi_2, \tau_2;a_2} = p_{\xi, \tau;a}
\]
for $a= a_1 + a_2$,     $\xi = \frac{a_1}{a} \xi_1 +\frac{a_2}{a} \xi_2  \in B_\rho(x_0)$ and 
\[
\tau = \frac{a_1}{a} \tau_1 + \frac{a_2}{a} \tau_2 +  \frac{a_1}{2a} |\xi_1|^2  +  \frac{a_2}{2a} |\xi_2|^2  -    \frac{|\xi|^2}{2}    >t_0 -\rho^2.
\]
 In addition, if  $p_{\xi, \tau;a}(x,t) \geq 0$ for some $(x,t) \in Q_\rho^-(x_0,t_0)$, then  $\tau \leq t_0$, and therefore   $(\xi,\tau) \in Q_\rho^-(x_0,t_0)$. 
\end{remark}

Let $G\subset\R^{n+1}$ be a compact set and fix a slope $a >0$. We define the set of \textit{contact points} $\A(a;G)\subset Q_1^-$ as follows,
\begin{equation} \label{def:A_a(G)}
\A(a;G) : = \{ (x,t) \in Q_1^- : \exists (\xi, \tau) \in G \,\, \text{s.t. } \, p_{\xi, \tau} \, \text{touches} \,  w_r^- \, \text{ from below  at } \, (x,\ft_{k}),  \ft_k \leq t < \ft_{k+1}\}.
\end{equation}
 For the  meaning for `touching from below'  see Definition \ref{def:touch-from-below}.

\begin{lemma}
    \label{lem:ABP}
  Let $w_r^-: Q_1 \to \R$ be as in \eqref{def:w-r} and $r^\alpha \leq a \leq   r^{-\alpha}$. Assume that $G \subset Q_1^-$ and $\A(a;G)  \subset Q_{\frac78}^-$. Then there exists a dimensional constant $C$ such that
   \[
    |G|\leq C |\A(a;G) |. 
    \]
\end{lemma}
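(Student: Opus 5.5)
We would follow Wang's parabolic ABP argument, adapted to the discrete time structure. The idea is to bound the measure of vertices $G$ by a "Jacobian" estimate on the map from contact points to vertices. Fix a contact point $(x,\ft_k)$ and a vertex $(\xi,\tau)\in G$ such that $p_{\xi,\tau}$ touches $w_r^-$ from below at $(x,\ft_k)$.

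The first step is to recover the vertex from the contact point. Since $p_{\xi,\tau}$ is a paraboloid in space and the spatial minimum of $w_r^--p_{\xi,\tau}$ at time $\ft_k$ is attained at $x$, we expect
\[
\xi = x + \tfrac1a \nabla_x w_r^-(x,\ft_k), \qquad \tau = \ft_k - \tfrac1a\Big(w_r^-(x,\ft_k) + \tfrac a2|x-\xi|^2 - m\Big),
\]
where $m\le 0$ is the touching constant. These formulas are formal, because $w_r^-$ is only semicontinuous. To make them rigorous, use Lemma \ref{lem-technic} with $\varphi=p_{\xi,\tau}$; the condition $r^\alpha\|\varphi\|_{C^2_x}\le 1$ holds since $a\le r^{-\alpha}$. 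This shows that $\partial E^h(t_k)$ is regular near the contact point, so $u_-(\cdot,t_k)$ is $C^{2}$ there and $w_r^-$ is touched by a paraboloid of opening $a$ from below in space. Lemma \ref{lem-technic} also gives the one-sided bound
\[
\Delta p\le Cr^{2-\alpha}+(1-\eta_r)\frac{p(\hat y,\ft_k)-p(\hat y,\ft_{k-1})}{\fh}.
\]
Since $\Delta p=-na$ and the time difference quotient of $p$ equals $a$, this is consistent and gives no contradiction. Instead, the useful information is a bound from above on $D^2_x w_r^-$ at contact points, obtained from the equation $\trace D^2 w\le \frac{\text{difference quotient}}{\fh}+Cr^{2-\alpha}$ combined with $D^2 w\ge -aI$.

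The second step is to partition time. For each discrete time $\ft_k$, let $G_k\subset G$ be the vertices whose contact occurs at time $\ft_k$, so that the contact set $\A$ is a union of slabs $A_k\times[\ft_k,\ft_{k+1})$. We then need to show $|G_k|\le C a^{-?}\ldots$; more precisely, we would prove $|G_k| \le C\,|A_k|\,\fh$. The heuristic is that, for fixed contact time $\ft_k$, the vertex map $x\mapsto(\xi,\tau)$ sends $A_k$ into a set whose $\tau$-extent is controlled by the jump $w_r^-(\cdot,\ft_{k-1})-w_r^-(\cdot,\ft_k)$. This jump is $\le C\fh$ times a quantity controlled through Lemma \ref{lem-technic}: the vertex lies below all earlier times, and $p_{\xi,\tau}\le w$ at $\ft_{k-1}$. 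Concretely, we would show that for fixed $\xi$ the admissible $\tau$ form an interval of length at most $C\fh$. This uses that $p$ must lie below $w$ at $\ft_{k-1}$ and touch at $\ft_k$, so $a(\ft_k-\ft_{k-1})$ dominates the variation of $\tau$; the $\hat y$-shift of size $\sqrt{\fh}$ is absorbed using the gradient bound $|\nabla p|\le Ca$. Meanwhile, the spatial map $x\mapsto \xi= x+\frac1a\nabla w$ has Jacobian $\det(I+\frac1a D^2w)$, which lies in $[0,C]$ by the two-sided Hessian bounds from the first step, via the AM–GM inequality applied to the trace bound. The area formula (in Lipschitz form, after covering the contact set) then gives $|\{\xi\}|\le C|A_k|$, and integrating over the $\tau$-interval gives $|G_k|\le C\fh|A_k|$. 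Summing over $k$ yields $|G|\le C|\A(a;G)|$.

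We also need that every $(\xi,\tau)\in G$ actually produces a contact point. For this we slide $p_{\xi,\tau}$ up until it first touches. By Remark \ref{rem:para-algebra} and the assumption $\A\subset Q^-_{7/8}$, the touching happens at an interior discrete time, not on the lateral boundary.

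The main obstacle is the second step. Touching is defined only at discrete times, and the Euler–Lagrange information involves the shifted point $\hat y$ rather than $x$, so the classical argument bounding $\partial_t$ of the vertex map does not apply directly. We would first try to prove the bound on the $\tau$-length by comparing $p_{\xi,\tau}$ with $p_{\xi,\tau'}$ at times $\ft_{k-1}$ and $\ft_k$. If the $\hat y$ shift spoils the constant, we would exploit $a\ge r^\alpha$ together with $\fh$ small to absorb the resulting error $C a\sqrt{\fh}$.
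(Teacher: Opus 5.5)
Your outline follows the paper's proof. You split $G$ into the sets $G_k$ of vertices whose contact occurs at $\ft_k$, so that $|\A(a;G_k)|=\fh\,\Ha^n(\mathtt{Pr}(\A(a;G_k)))$. You map contact points onto $\mathtt{Pr}(G_k)$ by $\Psi(x)=x+a^{-1}\nabla w_r^-(x,\ft_k)$, using a two-sided Hessian bound and the area formula. You bound the $\tau$-fibres of $G_k$ by $C\fh$ and conclude by Fubini. One small correction on the Hessian bound: the equation controls $\trace(M\nabla^2 w_r^-)$, where $M$ is the coefficient matrix in \eqref{eq:mean-curv}, not $\trace\nabla^2 w_r^-$. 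Combined with $\nabla^2 w_r^-\ge -aI$ and $\frac12 I\le M\le 2I$, this still gives $\nabla^2 w_r^-\le CaI$, which is the paper's Pucci-operator step.

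The step you flag as the main obstacle is, however, only a heuristic, and the justification you sketch does not give it. Fix $\xi$ and any reference $\tau$, and set $m_j=\min_y(w_r^--p_{\xi,\tau})(y,\ft_j)$. The admissible $\tau$ then form an interval of length at most $a^{-1}\big(\min_{j\le k-1}m_j-m_k\big)$, a quantity independent of the reference $\tau$. The touching conditions at $\ft_{k-1}$ and $\ft_k$ say nothing about this drop, since a priori $w_r^-$ could jump down arbitrarily between consecutive times. Only the Euler--Lagrange equation bounds it, and you access it through the discrete definition of touching. Take $(\xi,\tau_1),(\xi,\tau_2)\in G_k$ with $\tau_1<\tau_2$, and let $\varphi$ equal $p_{\xi,\tau_1}$ at the times $\ft_j$, $j\le k-1$, and $p_{\xi,\tau_2}$ at $\ft_k$. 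This $\varphi$ touches $w_r^-$ from below at the common contact point in the sense of Definition \ref{def:touch-from-below}, so Lemma \ref{lem-technic} applies to it. Because both parabolas share $\xi$, the spatial parts cancel, and $\varphi(\hat y,\ft_k)-\varphi(\hat y,\ft_{k-1})=a\fh-a(\tau_2-\tau_1)$ for every $\hat y$. Hence $-na\le Cr^{2-\alpha}+(1-\eta_r)\big(a-a(\tau_2-\tau_1)/\fh\big)$, which gives $\tau_2-\tau_1\le C\fh$ since $a\ge r^\alpha$.

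So the $\hat y$-shift costs nothing here. Your fallback of absorbing an error $Ca\sqrt{\fh}$ would actually fail: after division by $\fh$ that error becomes $Ca/\sqrt{\fh}$. It would only yield $\tau_2-\tau_1\lesssim\sqrt{\fh}$, too weak for $\Ha^{n+1}(G_k)\le C\fh\,\Ha^n(\mathtt{Pr}(G_k))$.

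Finally, no sliding of parabolas is needed, and sliding would change $\tau$. In the discrete setting, $(\xi,\tau)\in G_k$ just means $p_{\xi,\tau}$ is strictly below $w_r^-$ up to $\ft_{k-1}$ but not at $\ft_k$. The contact point is then a minimum point of $w_r^--p_{\xi,\tau}$ at $\ft_k$, and it is independent of $\tau$.
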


\begin{proof}
Denote $u = u_-$ and $w_r = w_r^-$ for short.  For $k\in\mathbb{Z}$ we denote by $G_k$ the set of points $(\xi, \tau)\in G$ such that
\[
k=\max\{m\in\mathbb{Z} \,:\, w_r(y,\mathfrak{t}_j)>p_{\xi, \tau}(y,\mathfrak{t}_j)\text{ for all $y \in B_1 $ and $ j \leq m-1$}\}.
\]
In other words, for $(\xi, \tau)\in G_k$ the associated touching point is of the form $(x,\ft_k)$, i.e.,  
\[
 \A(a;G_k)  = \A(a;G)  \cap \big(\R^n \times [\ft_k, \ft_{k+1}) \big) = : \A_{a, k}.
\]
  Notice that by the definition of $p_{\xi,\tau}$, the position  in  space  of the touching point $(x,\ft_k)$  is independent of $\tau$ whenever $(\xi,\tau)\in G_k$.   Therefore if we denote  the projection to space as $\mathtt{Pr}: \R^{n+1} \to \R^n$, $\mathtt{Pr}(x,t) = x$, then it holds 
\[
 |\A_{a,k}| = \fh \,  \Ha^n\big( \mathtt{Pr}( \A_{a,k})\big). 
\]
Since  $G=\cup_k G_k$ and the sets $ \A_{a,k}$ are disjoint, it is enough to prove that for every $k$ it holds 
\begin{equation}
\label{eq:ABP-enough-to-prove}
|G_k| \leq C  \fh \Ha^n\big( \mathtt{Pr}( \A_{a,k}) \big). 
\end{equation}

Let us fix $(\xi , \tau) \in G_k$ and denote the associated touching point by $(x,\ft_k) \in \A_{a,k}$. Since the parabola $p_{\xi, \tau}$ defined in \eqref{def:parabola} touches $w_r$ from below (see Definition \ref{def:touch-from-below}) then it holds 
\begin{equation}
\label{eq:ABP-touch1}
\begin{split}
&\nabla w_r(x,\ft_k)= \nabla p_{\xi,\tau}(x,\ft_k)=-a(x-\xi) \quad \text{and} \\
&\nabla^2 w_r(x,\ft_k) \geq   \nabla^2 p_{\xi,\tau}(x,\ft_k)=-a I .
\end{split}
\end{equation}
From the first equality in \eqref{eq:ABP-touch1} we deduce that $\xi = x + a^{-1} \nabla w_r(x, \ft_k)$. Hence, we may define  map $\Psi: \mathtt{Pr}( \A_{a,k})  \to  \mathtt{Pr}(G_k)$  as
\begin{equation}
\label{eq:ABP-touch2}
\Psi(x) =  x + a^{-1} \nabla w_r(x, \ft_k). 
\end{equation}

We obtain from the second line in \eqref{eq:ABP-touch1} that $\nabla_x \Psi(x,\ft_k) = I +a^{-1}  \nabla^2 w_r(x, \ft_k)\geq 0$. It follows from the definition of the function $w_r$ in \eqref{def:w-r}  that at $x_r = rx$ and $t_k = r^2 \ft_k+t_0$ it holds 
\[
\begin{split}
&\nabla u(x_r,  t_k)  =    r^{1+\alpha} \nabla w_r(x, \ft_k) + r A x  \quad \text{and} \\
&\nabla^2 u(x_r, t_k)   = r^{\alpha} \nabla^2 w_r(x, \ft_k) +A. 
\end{split}
\]
In particular, $|\nabla u(x_r,  t_k)| \leq Cr $. Let us denote  the Pucci operator   $ \mathcal{P}_{\frac12,2}^-$ defined for symmetric matrices as 
\[
\mathcal{P}_{\frac12,2}^-(X) =  \inf_{\frac12I \leq  M \leq 2 I}  \trace \big( M X  \big),
\]
where the infimum is taken over symmetric matrices $M$ with eigenvalues bounded between $1/2$ and $2$ (see \cite{CC} for more detailed introduction to Pucci opertors). When $r$ is small,  we deduce from \eqref{eq:mean-curv} and from the above observations that 
\[
\begin{split}
- \mathrm{H}_{E^h(t_k)}\big(x_r,u(x_r,t_k) \big) &=  \frac{1}{\sqrt{1 + |\nabla u(x_r, t_k)|^2} } \trace\left( \Big( I - \frac{\nabla u(x_r, t_k) \otimes \nabla u(x_r, t_k)}{1 + |\nabla u(x_r, t_k)|^2} \Big) \nabla^2 u(x_r, t_k)  \right) \\
&\geq  r^\alpha  \mathcal{P}_{\frac12,2}^- \big(\nabla^2 w_r(x, \ft_k)  \big) +    \trace A - Cr^2.
\end{split}
\] 
We use the estimate \eqref{eq:el-LHS4} from the proof of Lemma \ref{lem-technic}  and have 
\[
\begin{split}
- \mathrm{H}_{E^h(t_k)}\big(x_r,u(x_r,t_k) \big) &\leq \trace A   + Cr^{2} +   r^{\alpha}\frac{p_{\xi, \tau}(x,\ft_k) - p_{\xi, \tau}(x,\ft_{k-1})}{\fh} \\
&\leq \trace A   + Cr^{2} + r^\alpha a.  
\end{split}
\]
Combining the previous two inequalities yields
\[
\mathcal{P}_{\frac12,2}^- \big(\nabla^2 w_r(x, \ft_k)  \big) \leq  a +  Cr^{2-\alpha} \leq (1 +Cr^{2-2\alpha})  a, 
\]
when $ r^\alpha \leq a$. Therefore since $\nabla^2 w_r(x,\ft_k) \geq -a I $ we deduce that 
\[
\nabla^2 w_r(x,\ft_k)  \leq C a I,
\]
when $\alpha < \frac14$ and  $r$ is small enough. Recalling the definition of  $\Psi:  \mathtt{Pr}( \A_{a,k})  \to  \mathtt{Pr}(G_k)$ from \eqref{eq:ABP-touch2}, we conclude by the area formula  and by the above that 
\begin{equation}
\label{eq:ABP-touch3}
\begin{split}
\Ha^n\big(\mathtt{Pr}(G_k)\big) = \Ha^n\big(\Psi(\mathtt{Pr}( \A_{a,k}) )\big) &\leq \int_{\mathtt{Pr}( \A_{a,k})}\det(\nabla_x \Psi(x,\ft_k) )\, dx \\
&= \int_{\mathtt{Pr}( \A_{a,k})}\det\big( I + a^{-1} \nabla^2 w_r(x,\ft_k)\big)\, dx \\
&\leq C \Ha^n\big(\mathtt{Pr}( \A_{a,k})\big).
\end{split}
\end{equation}

We need yet to relate the measure of the set $G_k \subset \R^{n+1}$ with the measure of its projection $\mathtt{Pr}(G_k) \subset \R^n$. In order to avoid confusion, we denote carefully the associated measures and claim that it holds 
\begin{equation}
\label{eq:ABP-time1}
\begin{split}
\Ha^{n+1}\big( G_k\big) \leq C \fh\,  \Ha^{n}\big( \mathtt{Pr}(G_k)\big).
\end{split}
\end{equation}
The claim \eqref{eq:ABP-enough-to-prove} then follows from \eqref{eq:ABP-touch3} and \eqref{eq:ABP-time1}.

 In order to prove \eqref{eq:ABP-time1} we claim that if  $\tau_1,\tau_2 \in \R$ are such that  $(\xi,\tau_1), (\xi, \tau_2) \in G_k$,  then 
\begin{equation}
\label{eq:ABP-time2}
|\tau_2 - \tau_1| \leq C_1 \fh. 
\end{equation}
In order to prove \eqref{eq:ABP-time2} we may assume $\tau_2 > \tau_1$. We define a test function $\varphi$ such that 
\[
\varphi(y,\ft_j) = \begin{cases} p_{\xi, \tau_1}(y,\ft_j), \,\, \text{for } \, j \leq k-1 \,\,  \text{ for all }\,  y \in\R^n,\\
p_{\xi, \tau_2}(y,\ft_k), \,\, \text{for } \, j = k \,\,  \text{ for all }\,  y \in \R^n.
\end{cases}
\]
By the definition of $G_k$,  $p_{\xi, \tau_1}$ and $p_{\xi, \tau_2}$ touch $w_r$ from below at some point  $(x,\ft_k)$, with $x \in B_{\frac78}$.  We conclude that also $\varphi$ touch $w_r$ from below at  $(x,\ft_k)$. Therefore Lemma \ref{lem-technic} yields 
\[
 \Delta \varphi(x, \ft_k)  \leq  Cr^{2-\alpha}  +  (1-  \eta_r)  \frac{\varphi(\hat x, \ft_k) -   \varphi(\hat x, \ft_{k-1}) }{ \mathfrak{h}}  
\]
for some point $\hat x$ with $|\hat x - x| \leq \hat  C \sqrt{\fh}$. It follows from the definition of  $p_{\xi, \tau}$ in  \eqref{def:parabola}  that
\[
\Delta \varphi(x, \ft_k)  = - n a \quad \text{and} \quad \varphi(\hat x, \ft_k) -   \varphi(\hat x, \ft_{k-1}) = p_{\xi, \tau_2}(\hat x,\ft_{k}) - p_{\xi, \tau_1}(\hat x,\ft_{k-1}) = a \fh -  a (\tau_2- \tau_1). 
\]
These, together with the previous inequality, yield for $r^\alpha \leq a$
\[
\tau_2- \tau_1 \leq C \fh 
\]
and \eqref{eq:ABP-time2} follows. 

We may then  proceed  to prove \eqref{eq:ABP-time1}. For every $\xi \in \text{Pr}(G_k)$ we denote $\tau_\xi := \inf\{ \tau \in \R : \,  (\xi,\tau) \in G_k\}$. Setting 
\[
\tilde G_k = \mathtt{Pr}(G_k) \times [\tau_\xi, \tau_\xi +C_1 \fh] 
\]
we conclude from  \eqref{eq:ABP-time2} that $G_k \subset \tilde G_k$. Therefore
\[
\begin{split}
\Ha^{n+1}(G_k) \leq \Ha^{n+1}(\tilde G_k) &= \int \chi_{\tilde G_k}(\xi,\tau) d\tau d\xi   = \int_{\R^n}\int_{\tau_\xi}^{\tau_\xi + C_1 \fh} \chi_{\tilde G_k}(\xi,\tau)d\tau d\xi\\
& = \int_{\R^n}\int_{0}^{C_1\fh}\chi_{\tilde G_k}(\xi,s + \tau_\xi)ds d\xi   = \int_{0}^{C_1\fh}\int_{\R^n} \chi_{\tilde G_k}(\xi,s + \tau_\xi)d\xi ds \\
& = \int_{0}^{C_1\fh} |\{\xi \in \R^n: (\xi, s+\tau_\xi) \in \tilde G_k\}|ds \\
&=C_1\fh  \mathcal H^n (\mathtt{Pr} (G_k)). 
\end{split}
\]
Hence, we have \eqref{eq:ABP-time1} and the claim of the lemma follows.
\end{proof}

\subsection{Basic measure estimate}

Since the equation we are dealing with degenerates as the gradient increases, we use the  idea of Savin \cite{Sa} and prove the decay of the contact set $\mathcal{A}(a, Q_1^1)$  as the slope $a$ increases, and not the decay of the level sets. The reason is that at a touching point we have a precise control on the gradient. The proof is based on the following fundamental lemmas, which we call basic measure estimates, which state that the set of touching points propagates  in time and space. It turns out that the equation has parabolic behavior in space-time cylinders $Q_\rho$ in large scales  $\rho \geq \sqrt{\fh}$. In smaller space-time cylinders, the behavior is elliptic.

\begin{lemma}[Basic measure estimate in large scales]
\label{lem:basic-macro}
 Let $w_r^-: Q_1^- \to \R$ be as in \eqref{def:w-r}, fix a cylinder $Q^- \subset Q_{\frac78}^-$, assume $w_r^-$ is  non-negative in $Q^-$,  $r \leq r_0$ and   $r^\alpha \leq a  \leq r^{-\alpha}$.   There exist constants $C_1 \geq 1$  and $\mu_1>0$ such that for all  $\rho \geq C_1 \sqrt{\fh}$ the following holds: 
If  $Q_{4\rho}^-(y,\ft)  \subset Q^-$ and $(y_1,\ft) \in  \mathcal{A}\big(a; Q^-\big)$ for some $y_1 \in \bar B_\rho(y)$, then for every $y_2 \in \bar B_{\rho/2}(y)$ it holds   
\[
\big|  \mathcal{A}\big(C_1 a; Q^- \big) \cap \big(  Q_{\rho/16}^-(y_2, \ft - \frac{3}{4}\rho^2)  \big) \big| \geq \mu_1 \rho^{n+2}. 
\]
In particular, the constants are independent of $a, r$ and $h$, when  $h$ is of course assumed to be small enough. 
\end{lemma}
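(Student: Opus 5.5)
The argument follows Savin's and Wang's basic measure estimate: we slide a family of concave barriers from below and combine them with the ABP estimate of Lemma \ref{lem:ABP}, with Lemma \ref{lem-technic} replacing the viscosity inequality.

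\emph{Normalization and comparison function.} Write $w=w_r^-$. Since $(y_1,\ft)$ is a contact point of $\mathcal A(a;Q^-)$, there is a parabola $p_{\xi,\tau}$ with $(\xi,\tau)\in Q^-$ touching $w$ from below at $(y_1,\ft_k)$, where $\ft_k\le\ft<\ft_{k+1}$. Then $w\ge p_{\xi,\tau}$ at all earlier discrete times, and at time $\ft_k$ the minimum of $w-p_{\xi,\tau}$ over $B_1$ is attained at $y_1$ and is $\le 0$. This gives an upper bound $w(y_1,\ft_k)\le p_{\xi,\tau}(y_1,\ft_k)$ at a point where the data are nonnegative on $Q^-$.

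Next we build an explicit barrier $\phi$ on $Q_{4\rho}^-(y,\ft)$, in the spirit of Wang's parabolic construction. It should have three properties:
\begin{itemize}
\item[(a)] $\phi\le 0$ on the parabolic boundary, so that $w\ge0\ge p_{\xi,\tau}+\phi$ there after a suitable shift;
\item[(b)] $\phi$ is large and positive near $(y_1,\ft_k)$, so that $p_{\xi,\tau}+\phi$ exceeds $w$ there;
\item[(c)] $\phi$ is a strict subsolution $\partial_t\phi-\Delta\phi\le -c a$ outside the small cylinder $Q_{\rho/16}^-(y_2,\ft-\tfrac34\rho^2)$, with $|\nabla^2\phi|,|\partial_t\phi|\le Ca$ and with $\partial_{tt}\phi$ and $\partial_t\nabla\phi$ bounded by $Ca\rho^{-2}$ and $Ca\rho^{-1}$.
\end{itemize}
A standard choice is a function of the form $a\rho^2\,\Phi((x-y_2)/\rho,(t-\ft+\tfrac34\rho^2)/\rho^2)$ with $\Phi$ a fixed Wang-type barrier of the shape $e^{-M|x|^2/t}$-like, suitably truncated. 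Since $Q^-$ is fixed and $Q_{4\rho}^-(y,\ft)\subset Q^-$, the geometry ($y_1\in\bar B_\rho(y)$, $y_2\in\bar B_{\rho/2}(y)$) lets us design $\Phi$ once for all configurations.

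\emph{Sliding and contact points.} For each vertex $(\xi',\tau')$ in a set $G$ of size $\sim\rho^{n+2}$ near $(\xi,\tau)$ (by Remark \ref{rem:para-algebra} these remain in $Q^-$), we lower the function $p_{\xi',\tau';a}+\phi$ until it first touches $w$ from below, in the discrete sense of Definition \ref{def:touch-from-below}.
\begin{itemize}
\item By (b), the first touching happens inside $Q_{4\rho}^-(y,\ft)$ before time $\ft_k$ plus a step.
\item By (a), it does not happen on the boundary.
\item By (c) together with Lemma \ref{lem-visco1} applied to $\varphi=p+\phi$, it cannot happen outside $Q_{\rho/16}^-(y_2,\ft-\tfrac34\rho^2)$.
\end{itemize}
Lemma \ref{lem-visco1} is admissible here because $r^\alpha\|\varphi\|_{C^2_x}\le Cr^\alpha a\le C$. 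To exclude contact outside the small cylinder, the error terms $Cr^{2-\alpha}+\fh\|\partial_{tt}\varphi\|+C\sqrt\fh\|\partial_t\nabla\varphi\|\le Cr^{2-\alpha}+Ca(\fh\rho^{-2}+\sqrt\fh\rho^{-1})$ must be smaller than $ca$. This holds precisely when $\rho\ge C_1\sqrt\fh$ and $a\ge r^\alpha$ with $r\le r_0$, and this is where the large-scale hypothesis is used. At a touching point, $w$ is touched from below by $p+\phi$, which is itself touched by a parabola of slope $C_1a$ (using the $C^2$ bound on $\phi$). Hence every touching point lies in $\mathcal A(C_1a;Q^-)\cap Q_{\rho/16}^-(y_2,\ft-\tfrac34\rho^2)$.

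\emph{Measure count and main obstacle.} The remaining step is to bound the number of contact points from below, and we expect it to be the main obstacle. Applying Lemma \ref{lem:ABP} directly, with $G$ the set of vertices of slope $C_1a$ parabolas produced by adding those of slope $\sim a$ implicit in $\phi$, gives $|G|\le C|\mathcal A|$. To get a lower bound on $|G|$, the vertex map must be controlled, which is the same area-formula argument as in Lemma \ref{lem:ABP}, now with $\phi$ in place of a parabola. I would redo that proof with $\nabla\varphi$ in place of $-a(x-\xi)$. The extra difficulty is the time direction: the touching time only varies in steps of $\fh$, so we must redo \eqref{eq:ABP-time2} via Lemma \ref{lem-technic} for the perturbed test function to show that vertices with the same spatial component have touching times within $C\fh$. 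The resulting bound is $\mu_1\rho^{n+2}\le C|G|\le C|\mathcal A|$, which is the claim.
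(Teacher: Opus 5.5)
Your plan is a genuinely different, one-shot route from the paper's. As written it has concrete problems at the two places that carry the weight: the choice of sliding family and the measure count.

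\textbf{How the paper proceeds.} The paper never slides the barrier. In Step~1 its barrier $\varphi=p_{\xi,\tau}+(n+2)e^{8n}a\rho^2\psi(x/\rho,t/\rho^2)$, with $\psi$ a time-shifted heat kernel, satisfies $(\partial_t-\Delta)\varphi=-a$ in the \emph{whole} cylinder $B_{4\rho}\times(-\tfrac{\rho^2}{4n},t_1]$, with no hole. Lemma~\ref{lem-visco1} therefore rules out any interior touching; this is where $\rho\ge C_1\sqrt{\fh}$ enters. Since $\varphi<p_{\xi,\tau}\le w$ on the lateral boundary and on the bottom outside $B_{\rho/32}$, while $\varphi>w$ at $(y_1,t_1)$, one gets a single point $z_1\in B_{\rho/32}$ at the bottom time with $w\le p_{\xi,\tau}+C'a\rho^2$.

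Step~2 then slides \emph{pure} parabolas $q_{z,s}=p_{\xi,\tau;a}+\tilde Ca(t+s+\tfrac{\rho^2}{4n})-\tfrac{\tilde Ca}{2}|x-z|^2$, with $(z,s)$ in an explicit box of measure $\sim\rho^{n+2}$. By Remark~\ref{rem:para-algebra} these are parabolas of slope $(1+\tilde C)a$ whose vertices lie in $Q^-$ and fill a set of measure $\gtrsim\rho^{n+2}$. So Lemma~\ref{lem:ABP} applies verbatim. Time $\ft-\tfrac34\rho^2$ is reached by iterating this short step $3n$ times.

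The ``main obstacle'' you flag does not occur there. Your route would avoid the iteration, but it needs a barrier with a hole that is globally $C^2$ with $|\nabla^2\phi|,|\partial_t\phi|\le Ca$, an ABP lemma for non-parabolic test functions, and the fixes below.

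\textbf{(i) The family.} You have $p_{\xi',\tau'}-p_{\xi,\tau}=a(\tau-\tau')+a(x-\xi)\cdot(\xi'-\xi)-\tfrac a2|\xi'-\xi|^2$. The vertex $\xi$ of the parabola touching at $y_1$ need not be within $O(\rho)$ of $y$.
\begin{itemize}
\item For $|\xi'-\xi|\sim\rho$, which you need for $|G|\sim\rho^{n+2}$, this difference is of order $a\rho|y-\xi|\gg a\rho^2\sim|\phi|$ on $Q^-_{4\rho}(y,\ft)$.
\item It grows linearly in $|x-y|$ outside that cylinder, where Definition~\ref{def:touch-from-below} still requires $\varphi\le w$ on all of $B_1$. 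So (a) and (b) fail for most of $G$.
\item To repair this you would have to shear $\tau'$ by $(y-\xi)\cdot(\xi'-\xi)$, make $\phi$ decay quadratically off $B_{4\rho}(y)$, and check separately that vertices stay in $Q^-$. Remark~\ref{rem:para-algebra} concerns sums of parabolas, not perturbed vertices. Keeping $p_{\xi,\tau}$ fixed and varying an added concave parabola, as the paper does, avoids all of this.
\item Do not ``lower'' the test function. Subtracting a constant is the same as moving $\tau'$, which collapses the family to $n$ parameters and only gives $|\mathcal{A}|\gtrsim\fh\rho^n$. Keep $(\xi',\tau')$ fixed and take the first step $\ft_m$ with $\min(w-\varphi)(\cdot,\ft_m)\le0$.
\end{itemize}

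\textbf{(ii) The count.} The area formula never yields a lower bound on a vertex set; it gives $|\text{vertices}|\le C|\text{contacts}|$. The set of slope-$C_1a$ vertices is irrelevant to the count; the lower bound must come from the hand-chosen parameter set $G$. Concretely:
\begin{itemize}
\item Run the proof of Lemma~\ref{lem:ABP} for the family $p_{\xi',\tau';a}+\phi$ with $\Psi(x)=x+a^{-1}(\nabla w-\nabla\phi)(x,\ft_k)$. Its Jacobian lies between $0$ and $CI$ by the touching and by \eqref{eq:el-LHS4}, using $|\partial_t\phi|\le Ca$.
\item Prove the analogue of \eqref{eq:ABP-time2}; together these give $|G|\le C|\mathcal{A}_\phi(G)|$.
\item Show $\mathcal{A}_\phi(G)\subset\mathcal{A}(C_1a;Q^-)$. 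To locate the vertex in $Q^-$, write the slope-$C_1a$ parabola as $p_{\xi',\tau';a}$ plus a slope-$(C_1-1)a$ parabola under $\phi$.
\end{itemize}

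\textbf{Minor points.} In (c), $(\partial_t-\Delta)p_{\xi,\tau;a}=(n+1)a$, so you need $(\partial_t-\Delta)\phi\le-(n+1+c)a$, not $-ca$. In (a), the comparison is with $p_{\xi,\tau}\le w$, not with $w\ge0$.
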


\begin{proof}
Denote $w_r = w_r^-$ and choose 
\begin{equation}\label{eq:lem-basic1-sigma}
\sigma := \frac{1}{2\cdot 64^2 + 8n}.
\end{equation}
It turns out to be  technically more convenient to  prove a slightly stronger result. Namely we claim that if there is a point $(y_1,t_1)  \in  \mathcal{A}\big(a; Q^-\big)$, with $y_1 \in \bar B_\rho(y)$ and $-\sigma \rho^2+ \ft  \leq t_1 \leq \ft$, then for every  $y_2 \in \bar B_{\rho/2}(y)$  and for  space-time cylinder 
$\textbf{Q}_\rho  :=B_{\frac{\rho}{16}}\times  (-\sigma\rho^2,0]$ it holds 
\begin{equation}\label{eq:lem-basic1-claim}
\big|  \mathcal{A}\big(\hat C a; Q^- \big) \cap \big( \textbf{Q}_\rho +(y_2,  \ft - \frac{\rho^2}{4n}) \big)  \big| \geq \hat \mu  \rho^{n+2}, 
\end{equation}
for  constants $\hat C \geq 1$ and $\hat \mu >0$. 

We note that we  may repeat this argument, this time  for $y = y_2$, since by  \eqref{eq:lem-basic1-claim} there is a point $\tilde y_1 \in B_{\frac{\rho}{16}}(y_2)$ and  $\tilde t_1 \in [\ft -\sigma \rho^2- \frac{\rho^2}{4n},\ft -\frac{\rho^2}{4n})$ such that $(\tilde y_1, \tilde t_1) \in   \mathcal{A}\big(\hat C a; Q^- \big)$.  The claim of the lemma then follows by repeating \eqref{eq:lem-basic1-claim} for  $y= y_2$  and replacing $\ft$  by $ \ft - \frac{j}{4n}\rho^2$ with $ j =1, \dots, 3n$. Of course, we have to increase the value of the constant $\hat C$ in  the slope $\hat C a$.

We also point out that by the above assumption there is a parabola $p_{\xi, \tau; a}= p_{\xi, \tau}$ which touches $w_r$ from below at $(y_1,t_1)$.  By adding a constant we may assume that $p_{\xi, \tau}(x,t) \leq w_r(x,t)$ for $t \leq t_1$ and $p_{\xi, \tau}(y_1,t_1)= w_r(y_1,t_1)$. Fix $y_2 \in \bar B_{\rho/2}(y)$. Without loss of generality we may assume that $y_2 = 0$ and $\ft= 0$.  We prove the  claim \eqref{eq:lem-basic1-claim}  in two steps. 

\textbf{Step 1:} \, We show that there exists a point $z_1 \in B_{\frac{\rho}{32}}$ such that 
\begin{equation}
\label{eq:macro-1}
w_r\big(z_1,-\frac{\rho^2}{4n}\big) \leq p_{\xi, \tau}\big(z_1, -\frac{\rho^2}{4n}\big) + C'a \rho^2. 
\end{equation}

 Let $0 < \delta < \sigma  $ be a small  constant, whose choice will be clear later, where $\sigma$ is defined in \eqref{eq:lem-basic1-sigma}. Denote the heat kernel  by  $\Phi(x,t) = t^{-\frac{n}{2}} e^{-\frac{|x|^2}{4t}}$ and define  function $\psi : \R^n \times [-\frac{1}{4n},\infty)  \to \R$
\[
\psi(x,t) := \Phi\big(x,t+ \frac{1}{4n}+\delta\big)    - e^{-8n}   t - \frac{e^{-8n} }{(\frac{1}{4n} +\delta)^{\frac{n}{2}}}.  
\]
Clearly $\partial_t \psi = \Delta \psi  - e^{-8n}$.  We claim  that 
\begin{equation}
\label{eq:macro-2}
\psi(x,t) <0 \qquad \text{for all } \,  (x,t) \in \partial B_4 \times [-\frac{1}{4n},0] \quad \text{and} \quad    (x,t) \in  \Big(B_4\setminus \bar B_{\frac{1}{32}}\Big) \times \{-\frac{1}{4n} \}. 
\end{equation}

By a direct calculation one may check that the function $t \mapsto \big(t + \frac{1}{4n}+\delta\big)^{-\frac{n}{2}} e^{-\frac{4}{t + \frac{1}{4n}+\delta}}  $ is increasing in $ [-\frac{1}{4n},0]$ when $0 <\delta \leq \frac{1}{4n}$. Therefore for $|x|= 4$ and $ - \frac{1}{4n} \leq  t \leq 0$ by choosing $\delta$ small 
\[
\begin{split}
\psi(x,t) &= \frac{1}{\big(t + \frac{1}{4n}+\delta\big)^{\frac{n}{2}} } e^{ -\frac{4}{t + \frac{1}{4n}+\delta} } - e^{-8n }   t - \frac{ e^{-8n}}{(\frac{1}{4n} +\delta)^{\frac{n}{2}}} \\
&\leq  \frac{1}{\big(\frac{1}{4n}+\delta\big)^{\frac{n}{2}}} e^{-\frac{4}{\frac{1}{4n}+\delta}} + \frac{e^{-8n}}{4n}   - \frac{ e^{-8n} }{(\frac{1}{4n} +\delta)^{\frac{n}{2}}} \leq\frac{e^{-8n}}{4n}  -  \frac{ e^{-8n} }{2(\frac{1}{4n} +\delta)^{\frac{n}{2}}} <0.
\end{split}
\] 
For $|x| \geq \frac{1}{32}$ we have,  again by choosing $\delta $ small enough,  
\[
\psi(x,-\frac{1}{4n}) \leq \delta^{- \frac{n}{2} } e^{ -\frac{1}{64^2 \cdot \delta} }  + \frac{e^{-8n}}{4n}    - \frac{ e^{-8n}}{(\frac{1}{4n} +\delta)^{\frac{n}{2}}} <0.
\] 
Hence, we have \eqref{eq:macro-2}.  On the other hand, at every point $(x,t )\in \bar B_2 \times [-\sigma, 0]$ it holds 
\begin{equation}
\label{eq:macro-3}
\psi(x,t) \geq \frac{1}{\big(\frac{1}{4n} +\delta\big)^{\frac{n}{2}} } e^{ -\frac{1}{\frac{1}{4n} -  \sigma +\delta} }    - \frac{ e^{-8n}}{(\frac{1}{4n} +\delta)^{\frac{n}{2}}}   \geq \frac{1}{\big(\frac{1}{4n}+\delta\big)^{\frac{n}{2}} }\left(  e^{ -\frac{1}{\frac{1}{4n}-\sigma} }  -e^{-8n} \right)  >0, 
\end{equation}
where the last inequality follows from $\sigma < \frac{1}{8n}$.

We define function 
\[
\varphi(x,t) : = p_{\xi, \tau}(x,t) + (n +2)e^{8n}a \rho^2 \, \psi\big(\frac{x}{\rho},\frac{t}{\rho^2}\big).
\]
To  prove \eqref{eq:macro-1}, we claim that there is $z_1 \in B_{\frac{\rho}{32}}$ such that 
\begin{equation}
\label{eq:macro-4}
w_r\big(z_1,-\frac{\rho^2}{4n}\big) \leq \varphi\big(z_1, -\frac{\rho^2}{4n}\big). 
\end{equation}
Clearly, \eqref{eq:macro-4} implies \eqref{eq:macro-1}.
We argue by contradiction and assume \eqref{eq:macro-4} is not true. 

 Recall that $  p_{\xi, \tau} $ touches $w_r$ from below at $(y_1,t_1)$ for $- \sigma \rho^2  \leq t_1 \leq 0$ and $\sigma < \frac{1}{4n}$. In particular, $p_{\xi, \tau}(x,t) \leq w_r(x,t)$ for $t  \leq t_1 $. 
Therefore we have by \eqref{eq:macro-2} and by the contradiction assumption, that $\varphi < w_r$ on the parabolic boundary of the cylinder 
$\tilde{\textbf{Q}}_\rho := B_{4\rho} \times (-\frac{\rho^2}{4n}, t_1]$. On the other hand, it holds   $p_{\xi, \tau}(y_1,t_1) = w_r(y_1,t_1)$. Note that since we assume $0 = y_2 \in \bar B_{\rho/2}(y)$ then $|y|\leq \rho/2$, while $y_1 \in \bar B_\rho(y)$ yields $|y_1| \leq |y_1-y| + |y| < 2 \rho$. Therefore $(y_1, t_1) \in B_{2\rho} \times [-\sigma \rho^2, 0]$ and we observe by    $p_{\xi, \tau}(y_1,t_1) = w_r(y_1,t_1)$ and \eqref{eq:macro-3} that $w_r(y_1,t_1) <  \varphi(y_1,t_1)$. Therefore 
\[
\min_{\tilde{\textbf{Q}}_\rho} (w_r - \varphi)  <0
\]
and the minimum  is attained at a point  $(\tilde y,\ft_k)$ which is not on the parabolic boundary of $\tilde{\textbf{Q}}_\rho$. By adding a constant to $\varphi$ if necessary and extending it to $t< -\frac{1}{4n}$ in a suitable way, we have that  $\varphi$ touches $w_r$ from below at $(\tilde y,\ft_k)$.  We apply Lemma \ref{lem-visco1} with $r$ small enough and have  
\[
\partial_t \varphi(\tilde y, \ft_k) \geq \Delta \varphi(\tilde y, \ft_k) - Cr^{2-\alpha} -  \fh \|\partial_{tt}^2 \varphi \|_{C^0} - C \sqrt{\fh} \|\partial_{t} \nabla \varphi \|_{C^0}.
\]
Using  $\partial_t \psi - \Delta \psi = - e^{-8n}$ and $\partial_t p_{\xi, \tau} - \Delta p_{\xi, \tau} =  (n+1) a$ we deduce 
\begin{equation}
\label{eq:macro-5}
-a =(\partial_t \varphi -  \Delta \varphi)(\tilde y, \ft_k)  \geq   - Cr^{2-\alpha} - \frac{C_\delta}{\rho^2} a \fh - C  \frac{C_\delta}{\rho} a \sqrt{\fh}.
\end{equation}
We choose $C_1$ large enough so that for $\rho \geq C_1  \sqrt{\fh}$ and $r^\alpha \leq a $ with $r \leq r_0$ small enough  it holds 
\[ 
 Cr^{2-\alpha} + \frac{C_\delta}{\rho^2} a \fh + C  \frac{C_\delta}{\rho} a \sqrt{\fh} \leq \frac{a}{2}. 
\]
This contradicts \eqref{eq:macro-5} and thus we have \eqref{eq:macro-1}. 

\smallskip

\textbf{Step 2:} \,   We prove the inequality \eqref{eq:lem-basic1-claim}. Recall that we assume  $(y_2, \ft)= (0,0)$ and we need to prove 
\begin{equation}
\label{eq:macro-6}
\big|  \mathcal{A}\big(\hat C a; Q^-\big) \cap   \big( \textbf{Q}_\rho  +  (0, -\frac{\rho^2}{4n}) \big) \big| \geq \hat \mu \rho^{n+2} ,
\end{equation}
where $\textbf{Q}_\rho  =B_{\frac{\rho}{16}}\times  (-\sigma\rho^2,0]$ and $\sigma$ is from \eqref{eq:lem-basic1-sigma}. We  define the set of parameters 
\[
\hat G := \{ (z,s)\in \R^{n+1} : |z-z_1|\leq \frac{\sqrt{\sigma}}{2} \rho,  \quad \frac{\sigma}{2}\rho^2  \leq s \leq  \sigma \rho^2 \}, 
\]
where   $z_1 \in B_{\frac{\rho}{32}}$ is  from \eqref{eq:macro-1}.   For $(z,s)\in \hat G$ we consider the parabola 
\begin{equation}
\label{eq:macro-7}
q_{z,s}(x,t) :=  p_{\xi, \tau}(x,t)  + \tilde C a \big(t + s + \frac{\rho^2}{4n} \big) - \frac{\tilde C a}{2}|x- z|^2
\end{equation}
and claim that it touches $w_r$ from below at some point in the cylinder $\textbf{Q}_\rho +(0, -\frac{\rho^2}{4n})$.  

Since $  p_{\xi, \tau} $ touches $w_r$ from below at $(y_1,t_1)$ for $- \sigma \rho^2  \leq t_1 \leq 0$ and $\sigma < \frac{1}{4n}$, then  $p_{\xi, \tau}(x,t) \leq w_r(x,t)$ for $t \leq - \frac{\rho^2}{4n} $. For $t \leq  -\frac{\rho^2}{4n} - \sigma \rho^2$ it holds 
\[
q_{z,s}(x,t)  \leq p_{\xi, \tau}(x,t) <  w_r(x,t).  
\]
If $|x| \geq \frac{\rho}{16}$ then, since   $|z| \leq |z_1|+ |z-z_1| \leq \frac{\rho}{32} + \frac{\rho}{64}$,  we have $|x-z|\geq \frac{\rho}{64}$. Therefore  for $|x| \geq \frac{\rho}{16}$ and $t \leq -  \frac{\rho^2}{4n}$ we have by the choice of $\sigma$
\[
q_{z,s}(x,t) \leq    p_{\xi, \tau}(x,t)  + \tilde C a \sigma \rho^2  - \frac{\tilde C a}{2} \frac{\rho^2}{64^2} < p_{\xi, \tau}(x,t) \leq w_r(x,t). 
\]
On the other hand,  when  the constant $C'$ is fixed,  by choosing $\tilde C$ large enough we have   
\[
\begin{split}
q_{z,s}\big(z_1,-\frac{\rho^2}{4n}\big) \geq   p_{\xi, \tau}\big(z_1,-\frac{\rho^2}{4n}\big)  +   \frac{\tilde C \sigma}{2} a \rho^2 -  \frac{\tilde C }{2} \frac{\sigma}{4} a \rho^2  > p_{\xi, \tau}\big(z_1,-\frac{\rho^2}{4n}\big)  + C' a \rho^2. 
\end{split}
\] 
Therefore we obtain from \eqref{eq:macro-1} that $q_{z,s}\big(z_1,-\frac{\rho^2}{4n}\big)  >  w_r\big(z_1,-\frac{\rho^2}{4n}\big)$.   We conclude that for all  $(z,s)\in \hat G$ the parabola \eqref{eq:macro-7} touches $w_r$ from below  at some point in the cylinder $\textbf{Q}_\rho + (0, -\frac{\rho^2}{4n}) $. 

Remark \ref{rem:para-algebra} implies that we may write  the parabola in \eqref{eq:macro-7} as $q_{z,s}(x,t)  = p_{\xi_z, \tau_{z,s}; \hat C a}$ for  $\hat C = 1 + \tilde C$ and $(\xi_z,  \tau_{z,s}) \in Q^-$.  Denote $G = \{ (\xi_z, \tau_{z,s}) : z \in \hat G \}$. Since $|\hat G| \geq c \rho^{n+2}$, it follows from the formulas in Remark  \ref{rem:para-algebra} that $|G|\geq c_1\rho^{n+2}$. 
Hence, we have by Lemma \ref{lem:ABP} that 
\[
\big|  \mathcal{A}(\hat C a; Q^-) \cap \big(  \textbf{Q}_\rho + (0, -\frac{\rho^2}{4n})  \big) \big| \geq \hat \mu \rho^{n+2},
\]
which is exactly the inequality  \eqref{eq:macro-6}.
\end{proof}

\begin{lemma}[Basic measure estimate in small scales]
\label{lem:basic-micro}
 Let $w_r^-: Q_1 \to \R$ be as  in \eqref{def:w-r}, fix a cylinder $Q^- \subset Q_{\frac78}^-$, assume $w_r^-$ is  non-negative in $Q^-$, $r \leq r_0$ and  $r^\alpha \leq a  \leq r^{-\alpha}$.    There exist  constants $C_2, C_3 \geq 1$  and $\mu_2>0$ such that the following holds for $Q_{C_3\sqrt{ \fh}}^-(y,\ft) \subset Q^-$. 
\begin{itemize}
\item[(a)] If   $(y_1,\ft)   \in  \mathcal{A}(a; Q^-)$ for some $y_1 \in \bar B_{4\rho}(y)$ with   $0< \rho \leq \sqrt{\fh}$ , then  it holds 
\[
\Ha^n \big( \mathcal{A}(C_2 a; Q^-) \cap \big( B_{\rho}(y) \times \{\ft\}  \big) \big)  \geq \mu_2 \rho^n. 
\]
\item[(b)]  If    $(y_1,\ft + j \fh)   \in  \mathcal{A}(a; Q^-)$ with $0 \leq j \leq C_3$ for some $y_1 \in \bar B_{(j+2)\sqrt{\fh}}(y)$, then  it holds 
\[
\Ha^n \big( \mathcal{A}(C_3 a; Q^-) \cap \big( B_{\sqrt{\fh}/2}(y) \times \{\ft\}  \big) \big)  \geq \mu_2 \fh^{\frac{n}{2}}. 
\]
\end{itemize}

\end{lemma}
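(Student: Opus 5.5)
In the small-scale regime $\rho\le\sqrt{\fh}$ the time discretization is too coarse to let any touching information propagate backward in time, so the equation behaves elliptically: at a fixed discrete time $\ft_k$ the Euler--Lagrange equation only sees $w_r^-(\cdot,\ft_k)$ and the previous profile $w_r^-(\cdot,\ft_{k-1})$. I would therefore prove (a) as a purely spatial, elliptic version of Step 1--Step 2 in Lemma \ref{lem:basic-macro}, and then obtain (b) by iterating (a) together with a one-step-back argument.

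For (a), let $p_{\xi,\tau}$ touch $w_r:=w_r^-$ from below at $(y_1,\ft)$ with $\ft=\ft_k$. I first construct a barrier at the fixed time $\ft_k$: $\varphi(x,\ft_k)=p_{\xi,\tau}(x,\ft_k)+Ka\rho^2\Psi\big((x-y)/\rho\big)$, where $\Psi$ is the radial function $|x|^{-\beta}$ suitably truncated, with $\beta$ large. This $\Psi$ is subharmonic away from $B_{1/4}$, negative on $\partial B_{8}$, and positive on $B_{4}$. For earlier times $j\le k-1$ I set $\varphi(\cdot,\ft_j)=p_{\xi,\tau}(\cdot,\ft_j)$ minus a large constant. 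The touching condition at earlier times then holds automatically.

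Arguing by contradiction, assume $w_r>\varphi$ on $B_{\rho/4}(y)$. Then $w_r-\varphi$ has a negative minimum at some interior point of $B_{8\rho}(y)\setminus B_{\rho/4}(y)$ at time $\ft_k$, and there $\varphi$ touches from below in the sense of Definition \ref{def:touch-from-below}. Lemma \ref{lem-technic} gives
\[
\Delta\varphi\le Cr^{2-\alpha}+(1-\eta_r)\frac{\varphi(\hat y,\ft_k)-\varphi(\hat y,\ft_{k-1})}{\fh}.
\]
The key point is the difference quotient, where the choice of $\varphi$ at time $\ft_{k-1}$ matters. I would keep $\varphi(\cdot,\ft_{k-1})=p_{\xi,\tau}(\cdot,\ft_{k-1})$, with no shift, so that the quotient equals $a+Ka\rho^2\Psi(\cdot)/\fh$. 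This is the main obstacle: the term $Ka\rho^2\Psi/\fh$ is not small unless $\rho^2\ll\fh$. To handle it, I would instead subtract the bump also at time $\ft_{k-1}$ (touching still holds because it is a lowering there). The quotient is then exactly $a$, while $\Delta\varphi\ge -na+cKa$ at the minimum point, using $|\hat y-y|\le \hat C\sqrt{\fh}$ and the fact that $\Psi$ is evaluated at comparable points on scale $\rho$. For $\rho\ll\sqrt{\fh}$ this needs $\Psi$ to have a uniform positive Laplacian on the whole relevant region, so I would take $\Psi$ to be a paraboloid-type profile $c-|x|^2$-free combination on scale $\sqrt{\fh}$, which is where I expect to fiddle with constants. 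Choosing $K$ large gives a contradiction.

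Once a point $z_1\in B_{\rho/4}(y)$ with $w_r(z_1,\ft_k)\le p_{\xi,\tau}(z_1,\ft_k)+C'a\rho^2$ is found, I slide the paraboloids $p_{\xi,\tau}-\frac{\tilde Ca}{2}|x-z|^2+c$ for $z\in B_{c\rho}(z_1)$ at time $\ft_k$ only. These touch inside $B_\rho(y)$, and the spatial area-formula part of Lemma \ref{lem:ABP} (estimate \eqref{eq:ABP-touch3}) converts the measure $\sim\rho^n$ of the vertex set into $\Ha^n$ of the contact set, giving (a).

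For (b), I step backward one time level at a time. Starting from a contact point at $\ft+j\fh$, I show there is a contact point at $\ft+(j-1)\fh$ within distance $\sqrt{\fh}$, at the price of multiplying the slope by a fixed constant. This is done by applying Lemma \ref{lem-technic} to a paraboloid of slope $Ca$ opening at the earlier time: the Euler--Lagrange inequality forces $w_r(\cdot,\ft_{j-1})$ to come close to $p$ near $\hat y$. After $j\le C_3$ steps I apply (a) with $\rho=\sqrt{\fh}/2$, which yields (b) with $C_3$ chosen larger than all the accumulated slope constants.
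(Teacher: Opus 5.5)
\textbf{Verdict: genuine gap in (a), at the step you yourself call the main obstacle; (b) inherits it.}

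Your overall plan does match the paper's. For (a): a fixed-time barrier built from a radial function that is subharmonic off a small core, a contradiction via Lemma \ref{lem-technic}, then sliding paraboloids and the area-formula estimate \eqref{eq:ABP-touch3}. For (b): a backward step in which the test function is raised at the previous time by a localized bump of height $\sim C'a\fh$, so that the difference quotient becomes $\lesssim -C'a$ against $\Delta\varphi=-na$.

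\textbf{Why your fix fails.} At times $\ft_j$ with $j\le k-1$ you only know $w_r^-\ge p_{\xi,\tau}$. So Definition \ref{def:touch-from-below} forces $\varphi(\cdot,\ft_{k-1})\le p_{\xi,\tau}(\cdot,\ft_{k-1})$, up to a common constant.
\begin{itemize}
\item To make the quotient exactly $a$ you must \emph{add} the same positive bump at $\ft_{k-1}$. That is a raising wherever $\Psi((\hat y-y)/\rho)>0$, and $\hat y$ is not controlled inside the bump, so touching at $\ft_{k-1}$ is lost.
\item A genuine lowering there (subtracting the bump, or a large constant as in your first setup) only increases the quotient.
\item With the admissible choice, the quotient is $a+Ka\rho^2\Psi((\hat y-y)/\rho)/\fh$, while the Laplacian gains only $Ka\inf\Delta\Psi$ on the annulus. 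Both are linear in $K$, so enlarging $K$ is useless. You need $\rho^2\sup\Psi<\fh\inf\Delta\Psi$, which for such a barrier forces $\rho\le c_n\sqrt{\fh}$ with $c_n$ a small dimensional constant.
\item Hence $\rho\ll\sqrt{\fh}$ is the easy regime, not the hard one. $\Delta\varphi$ is evaluated at the minimum point, and $\hat y$ enters only through the quotient.
\item A $\Psi$ with uniformly positive Laplacian on the whole ball cannot exist. A subharmonic function that is negative on $\partial B_8$ is negative inside, so it cannot be $\ge 0$ at the known contact point.
\end{itemize}

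\textbf{What is missing: the paper's two-step treatment of $\rho\le\sqrt{\fh}$.}
\begin{itemize}
\item It runs the barrier argument only for $\rho\le 4^{-n-2}\sqrt{\fh}$, with $g(x)=|x|^{-n-1}-1$ and $\varphi=p_{\xi,\tau}$ unchanged at all earlier times. There $a(4\rho)^2\|g\|_{C^0}<a\fh$ bounds the quotient by $2a$, while $\Delta\varphi>2(n+1)a$ off $B_{\rho/2}$, giving the contradiction.
\item For $4^{-n-2}\sqrt{\fh}<\rho\le\sqrt{\fh}$ it chains this estimate along the segment from $y_1$ toward $y$, in steps $\hat\rho=4^{-n-2}\sqrt{\fh}$. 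Since $|y_1-y|\le 4\sqrt{\fh}$, the number of steps $N$ is dimensional. So the slope only grows to $C_2^N a$, and the final measure $\mu\hat\rho^n$ is comparable to $\rho^n$.
\end{itemize}

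\textbf{Two smaller points.}
\begin{itemize}
\item In the sliding step, the touching function must itself be a parabola of the form \eqref{def:parabola} to certify membership in $\A(C_2a;Q^-)$. The paper uses $p_{\xi,\tau}+C'a(t-\ft_k+\rho^2/32)-\frac{C'a}{2}|x-z|^2$, which lies below $p_{\xi,\tau}$ at all earlier times precisely because $\rho\le\sqrt{\fh}$.
\item In (b), the backward step yields a contact point at distance $\sim\sqrt{C'\fh}$ from $y_1$, with $C'$ large, not $\sqrt{\fh}$. It must therefore be followed by several chained applications of (a) at $\rho=\sqrt{\fh}/2$, which again presupposes the full-range version of (a).
\end{itemize}
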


\begin{proof}  
\textbf{Claim (a):} \,  Denote $w_r = w_r^-$.  Without loss of generality we may assume that $y = 0$ and  $\ft = \ft_k = k \fh$ for some $k \in \mathbb{Z}$. Let us first prove the claim (a) for $\rho \leq 4^{-n-2} \sqrt{\fh}$.

By the assumption there is a parabola $p_{\xi, \tau; a}= p_{\xi, \tau}$ which touches $w_r$ from below at $(y_1,\ft_k)$ with $y_1 \in \bar B_{4\rho}$. By adding a constant we may assume that $p_{\xi, \tau}(\cdot,t) \leq w_r(\cdot ,t)$ for $t \leq t_1$ and $p_{\xi, \tau}(y_1,\ft_k)= w_r(y_1,\ft_k)$.    First, we claim that there is a point $z_1 \in B_{\frac{\rho}{2}}$  such that 
\begin{equation}
\label{eq:micro-0}
w_r(z_1, \ft_k) \leq p_{\xi, \tau}(z_1, \ft_k) + 8^{n+3}a \rho^2. 
\end{equation}
To this aim, consider function $g \in C^2(\R^n)$,  $g(x) = |x|^{-n-1} -1$ for $|x|\geq \tfrac18$, positive in $B_1$  and $\|g\|_{C^0} \leq 8^{n+1}$. Then it holds for $x \in \bar  B_1 \setminus B_{\frac18}$
\begin{equation}
\label{eq:micro-2a}
\Delta g(x) = 3(n+1)|x|^{-n-3} \geq 3(n+1). 
\end{equation}
  We define  function $\psi : \R^n \to \R$,
\[
\psi(x) := p_{\xi, \tau}(x,\ft_k) + a (4\rho)^2 g\big( \frac{x}{4\rho} \big)
\]
and claim that 
\begin{equation}
\label{eq:micro-3a}
\min_{\bar B_{5\rho}} (w_r(\cdot,\ft_k) - \psi) = \min_{\bar B_{\rho/2}} (w_r(\cdot,\ft_k) - \psi). 
\end{equation}
Since  $p_{\xi , \tau}$ touches $w_r$ from below at $(y_1, \ft_k)$ with  $y_1 \in \bar B_{4\rho}$ and $g \geq 0$ in $B_1$, it holds  $w_r(y_1, \ft_k) =  p_{\xi , \tau}(y_1,\ft_k) \leq \psi(y_1,\ft_k)$. Therefore   the minimum on the LHS in  \eqref{eq:micro-3a} is non-positive. In addition, once we have proven  \eqref{eq:micro-3a}, then \eqref{eq:micro-0} follows by choosing $z_1$ as the point where the minimum in \eqref{eq:micro-3a} is attained.

To prove \eqref{eq:micro-3a} we argue  by contradiction and assume the minimum is attained at  $\tilde y \in \bar B_{5 \rho} \setminus  \bar{B}_{\rho/2}$.  By construction  for all $|x|>4 \rho$ it holds  $\psi(x) < p_{\xi, \tau}(x,\ft_k) \leq w_r(x, \ft_k)$ and  therefore 
$\tilde y \in \bar B_{4 \rho}$.  We define a test function $\varphi$ as
\[
\varphi(x,\ft_j) = \begin{cases} p_{\xi,\tau}(x,\ft_j), \quad  \text{for }  j < k, \\
p_{\xi, \tau}(x,\ft_k) + a (4\rho)^2 g\big( \frac{x}{4\rho} \big), \quad  \text{for }  j = k.  
\end{cases}
\]
Then, according to Definition \ref{def:touch-from-below}, $\varphi$ touches $w_r$ from below at $(\tilde y,\ft_{k})$ with $\tilde y \in \bar B_{4 \rho}\setminus  \bar{B}_{\rho/2}$  and Lemma \ref{lem-technic} yields
\begin{equation}
\label{eq:micro-4a}
 \Delta \varphi(\tilde y,\ft_{k}) \leq  Cr^{2-\alpha}  +  (1-  \eta_r)  \frac{\varphi(\hat y, \ft_{k}) -   \varphi(\hat y, \ft_{k-1}) }{ \mathfrak{h}}  
\end{equation}
for $|\hat y| \leq \hat C \sqrt{\fh}$. We have by \eqref{eq:micro-2a} that 
\begin{equation}
\label{eq:micro-5a}
 \Delta \varphi(\tilde y,\ft_{k}) =  \Delta p_{\xi, \tau}(\tilde y,\ft_k) +  a (\Delta g)\big( \frac{\tilde y}{4 \rho} \big) \geq -a n + 3(n+1) a > 2(n+1) a.
\end{equation}
On the other hand, since $\|g\|_{C^0} \leq 8^{n+1}$ and $\rho \leq 4^{-n-2} \sqrt{\fh}$ we have 
\[
\begin{split}
\varphi(\hat y, \ft_{k}) -   \varphi(\hat y, \ft_{k-1}) = p_{\xi, \tau}(\hat y,\ft_k) - p_{\xi, \tau}(\hat y,\ft_{k-1})  + a (4\rho)^2 g\big( \frac{\hat y}{4 \rho} \big)  < a \fh +  a  \fh = 2 a \fh 
\end{split}
\]
Therefore by the above,  \eqref{eq:micro-4a} and  \eqref{eq:micro-5a}  we have  for $r^\alpha \leq a$ and $r$  small
\[
 2(n+1)  a  < Cr^{2-\alpha}  + 2a   \leq  3a,
\] 
which is a contradiction. Hence we have \eqref{eq:micro-3a}, which in turn implies  \eqref{eq:micro-0}.

We proceed by considering parabolas of type
\[
q_{z}(x,t) = p_{\xi, \tau}(x,t) + C' a\big(t -\ft_{k}+ \frac{\rho^2}{32} \big) - \frac{C' a}{2} |x - z|^2
\] 
for $z \in \bar B_{\frac{\rho}{5}}(z_1)$ and a large constant $C'$. Here $z_1 \in B_{\frac{\rho}{2}}$ is from \eqref{eq:micro-0}.  Since $\rho \leq \sqrt{\fh}$, it follows that for every $j \leq k-1$   we have $q_{z}(\cdot,\ft_j) < p_{\xi, \tau}(\cdot,\ft_j) \leq w_r(\cdot, \ft_j)$.  Similarly for all $x$ with $|x-z| > \frac{\rho}{4}$ it holds $q_{z}(x,\ft_k) <  p_{\xi, \tau}(x,\ft_k)\leq w_r(x, \ft_k)$. On the ther hand, using $|z - z_1| \leq \frac{\rho}{5}$ and choosing $C'$ large enough yield
\[
\begin{split}
q_{z}(z_1,\ft_k) &= p_{\xi, \tau}(z_1,\ft_k) +   C' a \frac{\rho^2}{32}    - \frac{C' a}{2} |z_1 - z|^2 \\
&\geq   p_{\xi, \tau}(z_1,\ft_k)  + C'  a \rho^2  \left( \frac{1}{32} - \frac{1}{50}  \right) >  p_{\xi, \tau}(z_1,\ft_k)  +   8^{n+3} a \rho^2. 
\end{split}
\]
Therefore we have by   \eqref{eq:micro-0} that $q_{z}(z_1,\ft_k) > w_r(z_1,\ft_k)$. We conclude that the parabola $q_{z}$ touches $w_r$ from below at some point $(x,\ft_k)$ and 
\[
|x| \leq | x-z| + |z-z_1| + |z_1| \leq   \frac{\rho}{4} + \frac{\rho}{5}  + \frac{\rho}{2} < \rho.  
\]

By  Remark \ref{rem:para-algebra} for every  $z \in \bar B_{\frac{\rho}{5}}(z_1)$  we may  write   $q_{z}(x,t) = p_{\xi_z, \tau_z ; C_2 a}$ for  $C_2 = 1 + C'$ and $\xi_z = \frac{1}{1+C'} \xi + \frac{C'}{1+ C'}z$.  Denote $G = \{ \xi_z : z \in \bar B_{\frac{\rho}{5}}(z_1)\} $ and note that $|G|\geq c\rho^n$. We conclude that for every $\xi_z \in G$, the parabola $ p_{\xi_z, \tau_z ; C_2 a}$ touches $w_r$ from below at a point $x \in B_{\rho}$. Hence, we have by Lemma \ref{lem:ABP}, or to be more precise, by the estimate \eqref{eq:ABP-touch3} in the proof of Lemma \ref{lem:ABP}, that 
\[
C\Ha^n \big( \mathcal{A}(C_2 a; Q^-) \cap \big( B_\rho \times \{\ft_k\}  \big) \big)  \geq \Ha^n(G) \geq c \rho^n. 
\]
This proves the claim (a) in the case $\rho \leq 4^{-n-2} \sqrt{\fh}$.

If $\rho > 4^{-n-2} \sqrt{\fh}$ and  $|y_1| \geq 2 \cdot 4^{-n-2} \sqrt{\fh}$, then we use  the previous  estimate repeatedly  as follows. We denote $\hat \rho =  4^{-n-2} \sqrt{\fh}$ and choose $y_2 = y_1 - 2  \hat \rho  \frac{y_1}{|y_1|}$. Then $y_1 \in B_{4 \hat \rho}(y_2)$ and we may use the above estimate to deduce 
\[
\Ha^n \big( \mathcal{A}(C_2 a; Q^-) \cap \big( B_{\hat \rho}(y_2) \times \{\ft_k\}  \big) \big)  \geq c \rho^n. 
\]
We repeat the argument by choosing $y_3 = y_1 - 3  \hat \rho  \frac{y_1}{|y_1|}$. Then $y_2 \in  \in B_{4 \hat \rho}(y_3)$ and by the above $y_2 \in \mathcal{A}(C_2 a; Q^-)$. Therefore the estimate implies 
\[
\Ha^n \big( \mathcal{A}(C_2^2 a; Q^-) \cap \big( B_{\hat \rho}(y_3) \times \{\ft_k\}  \big) \big)  \geq c \rho^n. 
\]
Therefore, we repeat this argument  for points $y_j = y_1 -  j 4^{-n-2} \sqrt{\fh} \frac{y_1}{|y_1|}  $ with $j =2,3,  \dots, N$ until we reach $y_N \in B_{\rho/2}$ and the claim (a) follows also in this case.  

\medskip

\textbf{Claim (b):} \, We again assume $y = 0, \ft = \ft_k$,  and prove the claim first  for $j = 1$.  Note that  $\ft + \fh = \ft_{k+1}$. By the assumption there is a parabola $p_{\xi,\tau}$ with an opening $a$ and center $(\xi, \tau) \in Q^-$, which touches $w_r$ from below at $(y_1,\ft_{k+1})$ with $|y_1| \leq 3\sqrt{\fh}$. In particular, $p_{\xi,\tau}(\cdot, \ft_i) \leq w_r(\cdot, \ft_i)$ for all $i \leq k$. Let us denote 
\[
q(x) = C' a \fh -  a |x- y_1|^2
\]
for a large constant $C'$, whose choice will be clear later. We claim that there is   a point $z_2$ with $|z_2 - y_1|< \sqrt{C'  \fh}$ such that 
\begin{equation}
\label{eq:micro-1}
q(z_2) + p_{\xi,\tau}(z_2,\ft_k) > w_r(z_2,\ft_k). 
\end{equation}
We argue by contradiction. Since $q(x) \leq 0$ for all $|x-y_1| \geq  \sqrt{C' \fh}$ and $p_{\xi,\tau}(\cdot, \ft_k) \leq  w_r(\cdot, \ft_k)$, then the contradiction assumption implies  $q(x) + p_{\xi,\tau}(x,\ft_k) \leq  w_r(x, \ft_k)$ for all $x$. We define a test function $\varphi$ as
\[
\varphi(x,\ft_j) = \begin{cases} p_{\xi,\tau}(x,\ft_j), \,\, \text{for }  j \neq k, \\
q(x) + p_{\xi,\tau}(x,\ft_k),  \,\, \text{for }  j = k.  
\end{cases}
\]
Then $\varphi$ touches $w_r$ from below at $(y_1,\ft_{k+1})$ and Lemma \ref{lem-technic} implies 
\begin{equation}
\label{eq:micro-2}
 \Delta \varphi(y_1,\ft_{k+1}) \leq  Cr^{2-\alpha}  +  (1-  \eta_r)  \frac{\varphi(\hat y, \ft_{k+1}) -   \varphi(\hat y, \ft_{k}) }{ \mathfrak{h}}  
\end{equation}
for some point $\hat y$ with $|y_1 - \hat y | \leq \hat C \sqrt{\fh}$. The last condition yields 
\[
\begin{split}
\varphi(\hat y, \ft_{k+1}) -   \varphi(\hat y, \ft_{k}) &= p_{\xi,\tau}(\hat y,\ft_{k+1}) - p_{\xi,\tau}(\hat y,\ft_k)  -  q(\hat y) \\
&= a \fh - C' a \fh +  a |\hat y- y_1|^2\\
&\leq  a \fh - C' a \fh +  a \hat C^2 \fh \leq - \frac{C'}{2}   a \fh, 
\end{split}
\]
when $C'$ is chosen large enough.  Since $\Delta \varphi(x, \ft_{k+1}) = \Delta p_{\xi,\tau}(x,\ft_{k+1}) = -n a$ we obtain from \eqref{eq:micro-2}, by recalling that $\eta_r \leq Cr^2$, 
\[
-n a \leq Cr^{2-\alpha}  - (1 - Cr^2)  \frac{C'}{2}  a   \leq  - \frac{C'}{4}  a, 
\] 
when $r^\alpha \leq a$ and $r$ is small enough. This is clearly a contradiction when $C'$ is large enough and thus we have \eqref{eq:micro-1}.

We define  parabola 
\[
\tilde q(x,t) = C' a (t - \ft_{k-1} ) - \frac{C' a}{2} |x- z_2|^2 +  p_{\xi,\tau}(x,t) .
\]
It clearly holds $\tilde q(x,t) \leq  p_{\xi,\tau}(x,t)$ for all 
$t \leq \ft_{k-1} $ and $(x,\ft_{k})$ with $|x-z_2| \geq \sqrt{2  \fh}$. Since $p_{\xi,\tau}(\cdot, \ft_i) \leq  w_r(\cdot, \ft_i)$ for all $i \leq k$, then $\tilde q$ is below $w_r$ in these points. On the other hand, by \eqref{eq:micro-1}  we have 
that 
\[
\tilde q(z_2,\ft_{k}) =  C' a \fh +  p_{\xi,\tau}(z_2,\ft_{k}) \geq q(z_2) +  p_{\xi,\tau}(z_2,\ft_{k}) > w_r(z_2,\ft_{k}). 
\]
Therefore   $\tilde q$ touches $w_r$ from below at some point $(y_3,\ft_{k})$ with $|y_3-z_2|\leq \sqrt{2  \fh}$. By Remark \ref{rem:para-algebra} we may write 
$\tilde q(x,t) = p_{\xi',\tau';a'}$ for some $(\xi',\tau') \in Q^-$ and $a' = (C' +1)a$ and therefore $(y_3, \ft_k) \in \A\big( (C' +1)a; Q^-\big)$. Since $|z_2 - y_1 |\leq\sqrt{C' \fh} $ and $|y_1| \leq 3\sqrt{\fh}$  we have 
$|y_3|\leq C' \sqrt{\fh}$ for $C'$ large enough. We may then use the claim (a)  repeatedly with $\rho = \sqrt{\fh}/2$ for points $\tilde y_l = y_3 -l \rho  \,  \frac{y_3}{|y_3|}$ with $l=2,3, \dots, N$ until we reach $\tilde y_N \in \bar B_{\sqrt{\fh}}$  and obtain the claim (b). 

Finally the Claim (b)  for general  $j = 2,3,\dots, $ follows by using  the above estimate repeatedly  for  points $\big(\lambda_l y_1 + (1- \lambda_l)y, \ft +(j-l)\fh\big)$ (in place of $(y,\ft)$), with  $l =1,  2, \dots, j$ and  $\lambda_l =  \max\{0, (1- \frac{(l+2)\sqrt{\fh}}{|y_1-y|}) \}$ as in the Claim (a).   
\end{proof}

\subsection{Weak Harnack inequality}

Using the basic measure estimates from  Lemma \ref{lem:basic-macro} and Lemma \ref{lem:basic-micro} together with  a  covering argument we obtain weak Harnack inequality.  The covering argument is similar  to \cite{Wa}, but since the time is discrete, we have to be careful when our covering cylinders are small. For this reason we give the argument in full details. 

\begin{proposition}
\label{prop:weak-harnack}
Let $\delta_0>0$. There exist  constants    $r_0, m_0 > 0 $ and $C_0\geq 1$  such that  if  $w_r^-: Q_1^- \to \R$, defined  in \eqref{def:w-r}   with $C_0 \sqrt{\fh} \leq r \leq r_0$,  is non-negative in $Q_{16\rho_0}^-(y,\ft)  \subset Q_{\frac78}^-$, with $\rho_0 \geq r^{\alpha/2}$ and $\inf_{Q_{\rho_0}^-(y,\ft)} w_r  \leq m_0$, then 
\[
\big|\big{\{} (\tilde y,\tilde \ft) \in   Q_{\rho_0}^-(y,\ft-8\rho_0^2)  : \,   w_r^-(\tilde y,\tilde \ft) \leq C_0 \inf_{Q_{\rho_0}^-(y,\ft)} w_r^-   \big{\}} \big| \geq  \big( 1-\frac{\delta_0}{300} \big) |Q_{\rho_0}^-|. 
\] 
\end{proposition}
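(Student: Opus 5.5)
The plan is to follow Savin–Wang: an $L^\varepsilon$-type decay estimate for the contact sets $\mathcal{A}(a;Q^-)$ as $a$ grows, obtained from the basic measure estimates by a Calderón–Zygmund-type covering. Only the discrete time grid forces modifications, through Lemma \ref{lem:basic-micro}. Let $m=\inf_{Q_{\rho_0}^-(y,\ft)} w_r^-\le m_0$. After translating, scaling by $\rho_0$ and replacing $w_r^-$ by $w_r^-/\max\{m,r^{\alpha/2}\}$, we may assume $\rho_0=1$ effectively and the infimum is at most $1$. The scaling changes $\fh$ to $\fh/\rho_0^2\le \fh r^{-\alpha}$, which is still small since $r\ge C_0\sqrt{\fh}$, and it only rescales the slopes. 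The restrictions $r^\alpha\le a\le r^{-\alpha}$ in Lemmas \ref{lem:ABP}, \ref{lem:basic-macro} and \ref{lem:basic-micro} therefore cover every slope $a=C^j a_0$ used below, provided the number $j$ of iterations stays bounded by a quantity of order $\log(1/r)$. This is where $\rho_0\ge r^{\alpha/2}$ and the smallness of $m_0$ enter.

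\textbf{Step 1 (initial contact).} Take a point of $Q_{\rho_0}^-(y,\ft)$ where $w_r^-\le 2m$. Slide a parabola $p_{\xi,\tau;a_0}$ with $a_0\sim m$ (or $a_0=r^\alpha$ if $m$ is smaller) from below, with center near that point, inside $Q_{16\rho_0}^-$. Since $w_r^-\ge0$ on $Q^-=Q_{16\rho_0}^-(y,\ft)$, the parabola is negative near the parabolic boundary and must touch. This yields a point of $\mathcal{A}(a_0;Q^-)$ in $Q_{2\rho_0}^-(y,\ft)$. Lemma \ref{lem:basic-macro} with $\rho\sim\rho_0$, applied about $8$ times to move down in time, then gives $|\mathcal{A}(Ca_0;Q^-)\cap Q_{\rho_0}^-(y,\ft-8\rho_0^2)|\ge \mu|Q_{\rho_0}^-|$. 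At a contact point $w_r^-\le p\le C a_0\rho_0^2\lesssim C m$, so it suffices to show that the complement of $\mathcal{A}(C^j a_0;Q^-)$ in $K:=Q_{\rho_0}^-(y,\ft-8\rho_0^2)$ has measure at most $(1-\mu)^j|K|$, and then to choose $j$ with $(1-\mu)^j<\delta_0/300$. Then $C_0=C^j$.

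\textbf{Step 2 (decay via covering).} Let $D_j=K\setminus\mathcal{A}(C^j a_0;Q^-)$. I would show $|D_{j+1}|\le(1-\mu)|D_j|$ using a parabolic Calderón–Zygmund decomposition of $K$ into dyadic cylinders, in the "crawling ink spots'' form of Wang. If a dyadic cylinder $Q_\rho^-$ has a large proportion of points in $D_{j+1}$, then its predecessor (enlarged and shifted forward in time) lies in $D_j$ entirely. Indeed, a contact point of slope $C^j a_0$ in the predecessor would, by Lemma \ref{lem:basic-macro} when $\rho\ge C_1\sqrt{\fh}$, produce a $\mu$-fraction of contact points of slope $C^{j+1}a_0$ in $Q_\rho^-$, which is a contradiction. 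For cylinders of size $\rho<C_1\sqrt{\fh}$, time is discrete and the cylinder contains only $O(1)$ time slices. Here I would use Lemma \ref{lem:basic-micro}: part (a) propagates contact in space within a slice, and part (b) propagates it backward over boundedly many time steps, with $C_3$ of them. In this regime I would stop the decomposition at scale $\sqrt{\fh}$ and work slice by slice with $\Ha^n$-measure, using that sets in $Q_1^-$ are unions of slabs of height $\fh$.

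\textbf{Main obstacle.} The hardest step is the interface between the two regimes in the covering. At scales comparable to $\sqrt{\fh}$ the time-shift in Lemma \ref{lem:basic-macro}, $\tfrac34\rho^2$, is only a few time steps, so the predecessor and forward-shift geometry of the ink-spots lemma must be matched with Lemma \ref{lem:basic-micro}(b). There is also the risk that the constants $C_1,C_2,C_3$ compound. My first attempt is to replace the usual dyadic cubes by cubes whose time-side is an integer multiple of $\fh$ once $\rho\lesssim\sqrt{\fh}$. I would then verify the ink-spots lemma directly for this modified family, where the density condition is measured by $\Ha^n$ on each slice. This relies on Lemma \ref{lem:ABP} holding slice-wise, as in \eqref{eq:ABP-touch3}.
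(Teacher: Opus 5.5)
Your ingredients are the paper's: one initial contact point with slope $a_0\sim m/\rho_0^2$, then Lemma~\ref{lem:basic-macro} for $\rho\ge C_1\sqrt{\fh}$ and Lemma~\ref{lem:basic-micro}(a),(b) below that scale, iterated to get geometric decay of the non-contact set. What differs is the covering: you propose a dyadic Calder\'on--Zygmund/stacked-cube (crawling ink spots) decomposition adapted by hand to the time grid, while the paper uses a Savin-type adaptive covering.

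The paper's construction goes as follows.
\begin{itemize}
\item It takes the touching point $(y_1,t_1)$ of $m+\frac{32m}{\rho_0^2}(t-t')-\frac{16m}{\rho_0^2}|x-y'|^2$.
\item It works inside the backward paraboloid $\mathcal{B}^-=\{t-t_1<-|x-y_1|^2\}\cap Q^-_{3\rho_0}(y_1,t_1)$, which already contains $Q^-_{\rho_0}(y,\ft-8\rho_0^2)$.
\item To each $(x,\ft_k)\in\mathcal{B}^-\setminus A_k$ it assigns the radius $\rho\le 3\rho_0$ at which the upward paraboloid $(x,\ft_k)+\{|z|^2<s<R^2\}$ first meets $A_k$.
\item It covers by $Q^+_\rho(x,\ft_k)$ if $\rho\ge\sqrt{\fh}$, and by $B_\rho(x)\times[\ft_k,\ft_k+\fh)$ otherwise.
\item The ranges $\rho\ge C_1\sqrt{\fh}$, $\sqrt{\fh}\le\rho<C_1\sqrt{\fh}$ and $\rho<\sqrt{\fh}$ go directly into Lemma~\ref{lem:basic-macro}, Lemma~\ref{lem:basic-micro}(b) and Lemma~\ref{lem:basic-micro}(a).
\item A Vitali lemma then gives $|\mathcal{B}^-\setminus A_{k+1}|\le\eta|\mathcal{B}^-\setminus A_k|$.
\end{itemize}
This removes exactly what you flag as your main obstacle. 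No dyadic family has to be aligned with $\fh$, and your Step 1 (initial density) is not needed. The good points produced lie in $\mathcal{B}^-$ and, by the choice of $\rho$, outside $A_k$, so they are automatically in $A_{k+1}\setminus A_k$. There is no overlap loss and no top-boundary loss.

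Your route should also go through, but Step 2 as written is too clean. Since $D_j\subset K$, the forward stack of a cube near the top of $K$ cannot lie in $D_j$. Moreover, the parabolic stacked-cube lemma only gives an estimate of the form $|E|\le\frac{N+1}{N}(1-c\mu)|F|$, with $F$ allowed to leave $K$. So you need the usual iteration over enlarged domains in place of $|D_{j+1}|\le(1-\mu)|D_j|$. You also need Lemma~\ref{lem:basic-micro}(b) with $j$ up to order $NC_1^2$ at the intermediate scales, at the cost of a larger slope factor.

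A minor point: normalizing by $\max\{m,r^{\alpha/2}\}$ only yields $w_r^-\le C_0\max\{m,r^{\alpha/2}\}$, which is coarser than necessary. The constraint $a_0\ge r^\alpha$ only involves $m\gtrsim r^\alpha\rho_0^2$, as in your own ``$a_0=r^\alpha$ if $m$ is smaller''. The paper simply takes $a_0=32m/\rho_0^2$.
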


\begin{proof}
Denote $w_r = w_r^-$. Let $(y',t') \in \bar Q_{\rho_0}^-(y,\ft)$ be such that  $w_r(y',t') = \inf_{Q_{\rho_0}^-(y,\ft)} w_r  = m$. By considering $w_r +\eps$ we may assume $m>0$.  We begin by showing that there exists $(y_1,t_1)\in Q_{\frac{5}{4}\rho_0}^- (y,\ft)$ such that $(y_1,t_1) \in \A(a_0; Q_{16\rho_0}^-(y,\ft))$ for $a_0 = \frac{32m}{\rho_0^2}$, where the contact set $\A(\cdot;\cdot)$ is defined in \eqref{def:A_a(G)}. Indeed, consider parabola 
\[
p(x,t) = m + \frac{32m}{\rho_0^2} (t-t') -  \frac{16m}{\rho_0^2} |x-y'|^2.
\]
Then $p(\cdot, t) <0$ for all $t \leq t' - \frac{\rho_0^2}{4^2}$, and  $p(x, t) <0$ for all $|x-y'| \geq \frac{\rho_0}{4}$ and $t <t'$. Since  $p(y',t') = m =  w_r(y',t')$, we deduce that the parabola $p$ touches $w_r$ from below at some point  $(y_1,t_1)\in  Q_{\frac{\rho_0}{4}}^-(y',t')$.

We define domain  
\[
\mathcal{B}^- := \{ (x,t) \in Q_{3\rho_0}^- (y_1,t_1) : t-t_1 < -|x-y_1|^2  \}.  
\]
Then it holds $Q_{\rho_0}^-(y,\ft-8\rho_0^2)  \subset \mathcal{B}^-$.  We choose $L := \max\{ C_1, C_2, C_3\}$, where the constants are from Lemma \ref{lem:basic-macro} and Lemma \ref{lem:basic-micro}, and consider contact   sets
\[
A_k = \{(x,t) \in \mathcal{B}^- \cap \A\big(L^k a_0; Q_{16\rho_0}^-(y,\ft) \big) \}. 
\]
We may assume that $t_1 = \ft_l = l \fh$ for some $k \in \mathbb{Z}$. We also assume $\rho_0^2 =  l'  \fh$ for  some $l' \in \mathbb{N}$ to simplify the argument. The case for general $\rho_0$ follows from the same argument with minor adjustements.  
Our goal is to show that there exists $\eta \in (0,1)$ such that 
\begin{equation} \label{eq:weak-harnack-0}
|\mathcal{B}^- \setminus A_k | \leq \eta^k |\mathcal{B}^-|
\end{equation}
for all $k$ which satisfy $32 m_0 L^k \leq 1$. 

Let us first make a few remarks on \eqref{eq:weak-harnack-0}. From the definition of contact point it follows that if $(x,t) \in A_k$ and $t \in  [\ft_i, \ft_{i+1})$, then  $(x,s) \in A_k$  for all $s \in [\ft_i, t)$. Moreover, since we assume that $\rho_0 \geq r^{\alpha/2}$ and $a_0 = \frac{32m}{\rho_0^2}$,  the slope $a= L^k a_0$ satisfies $a\leq r^{-\alpha}$ if $32 m_0 L^k \leq 1$. We will assume that  $m \leq m_0$ is small and $k$ is not too large so that this   is true throughout the proof. Finally, a parabola  $p_{\xi, \tau;a}$ defined in \eqref{def:parabola} with $(\xi,\tau) \in Q_{16\rho_0}^-(y,\ft)$ and $0 < a \leq  L^k a_0 = L^k \frac{32m}{\rho_0^2} $ satisfies $|p_{\xi, \tau;a}(x,t)| \leq C_0 m$ for $(x,t) \in Q_{16\rho_0}^-(y,\ft)$. 
Therefore if $(x,t)\in   \A\big(L^k a_0; Q_{16\rho_0}^-(y,\ft)\big)$, then $w_r(x,t) \leq C_0 m$. Hence, the inequality \eqref{eq:weak-harnack-0}  implies the claim by applying it with large enough $k$.

We prove  \eqref{eq:weak-harnack-0} using the basic measure estimates from  Lemma \ref{lem:basic-macro} and Lemma \ref{lem:basic-micro} together with  a covering argument. By definition of contact point, $A_k$ is relatively closed and thus $\mathcal{B}^-  \setminus A_k$ is (relatively) open.   We cover the set $\mathcal{B}^-  \setminus A_k $ with a family of  cylinders such that  for every point $(x,\ft_k) \in \mathcal{B}^-  \setminus A_k$ we choose a cylider $\textbf{Q}_{(x,\ft_k)}$ as follows. Denote an upward opening parabola of width $r$ as
\[
\mathcal{B}_{R} = \{ (y,s) \in \R^{n+1} :  |y|^2 <s  < R^2 \}. 
\]
We let $\rho>0$ be the infimum of the values $R$ for which the set $ \big( \mathcal{B}_R+(x,\ft_k) \big) \cap A_k$  is non-empty. The definition of $\mathcal{B}^- $  and the fact that $(y_1,t_1) \in \A\big(a_0; Q_{16\rho_0}^-(y,\ft) \big)$ guarantee that such $\rho$ exists and $\rho \leq 3\rho_0$.  If $\rho \geq \sqrt{\fh}$, we define $\textbf{Q}_{(x,\ft_k)} = Q_\rho^+ (x,\ft_k)$. On the other hand, if  $\rho < \sqrt{\fh}$ we 
define $\textbf{Q}_{(x,\ft_k)} = B_\rho(x) \times [\ft_k, \ft_k + \fh)$. Then the union of $\textbf{Q}_{(x,\ft_k)}$ covers $\mathcal{B}^- \setminus A_k $.

We claim that there is a universal constant $\mu_3 >0$ such that for every such cylinder $\textbf{Q}_{(x,t)}$ it holds 
\begin{equation} \label{eq:weak-harnack-1}
\big|  \big( A_{k+1} \cap \textbf{Q}_{(x,\ft_k)} \big) \setminus  A_k  \big| \geq \mu_3 | \textbf{Q}_{(x,\ft_k)}|
\end{equation}
We divide the proof in three cases. 

Let us first consider the case when the width of the cylinder  $\textbf{Q}_{(x,\ft_k)}$ satisfies  $\rho \geq C_1 \sqrt{\fh} $ where $C_1$  is from Lemma  \ref{lem:basic-macro}.  Denote $R = \sqrt{t_1-\ft_k} < 3 \rho_0$, and note that by the definition of $\mathcal{B}^-$ and $\rho$ it holds $|x-y_1|<R$ and $\rho \leq R$. We claim that  we may  choose  a point $y_2 \in B_{\rho/2}(x)$ such that  
\begin{equation} \label{eq:weak-harnack-2}
Q_{\frac{\rho}{16}}^-(y_2, \ft_k+ \frac{\rho^2}{4})  \subset  \mathcal{B}^- \cap \big(  \mathcal{B}_\rho + (x,\ft_k) \big). 
\end{equation}

First, we may  choose $y_2 \in B_\rho(x)$ such that 
\begin{equation} \label{eq:weak-harnack-3}
B_{\frac{\rho}{16}}(y_2) \subset B_{\frac{3\rho}{8}}(x) \cap B_{R - \frac{\rho}{4}}(y_1).  
\end{equation}
Indeed, if $|x-y_1| \geq \frac{5 \rho}{16}$ we choose $y_2 = x- \frac{5\rho}{16} \frac{x-y_1}{|x-y_1|} $, while if   $|x-y_1| < \frac{5 \rho}{16}$ we choose $y_2 = y_1$. It is straightforward to see from $|x-y_1|<R$ that with this choice we have  \eqref{eq:weak-harnack-3}. 

In order to show  \eqref{eq:weak-harnack-2}, we fix $(\tilde y,\tilde t) \in Q_{\frac{\rho}{16}}^-(y_2, \ft_k+ \frac{\rho^2}{4})$. This means that 
\begin{equation} \label{eq:weak-harnack-4}
 \frac{\rho^2}{4} - \frac{\rho^2}{16^2} < \tilde t  - \ft_k \leq  \frac{\rho^2}{4}  \quad \text{and} \quad \tilde y \in  B_{\frac{\rho}{16}}(y_2). 
\end{equation}
In particular, it follows from \eqref{eq:weak-harnack-3} and  \eqref{eq:weak-harnack-4} that 
\[
|\tilde y-x|^2  < \left(\frac{3\rho}{8}\right)^2  <  \frac{\rho^2}{4} - \frac{\rho^2}{16^2} <  \tilde t  - \ft_k <\rho^2. 
\]
By  the definition of $\mathcal{B}_r $, this implies $(\tilde y, \tilde t) \in   \mathcal{B}_\rho + (x,\ft_k)$.  We need yet to show that $(\tilde y, \tilde t) \in \mathcal{B}^-$, which is equivalent to $|\tilde y-y_1|^2 < t_1 -\tilde t$. From \eqref{eq:weak-harnack-3} it follows that $|\tilde y-y_1|^2  < (R -\rho/4)^2$. By \eqref{eq:weak-harnack-4} and by recalling  $R = \sqrt{t_1-\ft_k}$ we have
\[
t_1 -\tilde t = (t_1 - \ft_k) + (\ft_k  -\tilde t) = R^2  + (\ft_k  -\tilde t)  \geq R^2 - \frac{\rho^2}{4}.
\]
From $\rho \leq R$  it follows   $ (R -\rho/4)^2 < R^2 - \rho^2/4$. Therefore  we have $|\tilde y-y_1|^2 < t_1 -\tilde t$ and \eqref{eq:weak-harnack-2} follows. 

By the choice of $\rho$ it follows that there is a point $\tilde y_1 \in \bar B_{\rho}(x)$ such that $(\tilde y_1, \ft_k+ \rho^2 ) \in A_k$.  We may thus apply   Lemma  \ref{lem:basic-macro} for $(y,\ft) =  (x,\ft_k+ \rho^2)$ and $a = L^k a_0$, as long as it holds $L^k a_0 \leq 1$, to deduce that 
\[
\big|\A\big(L^{k+1}a_0; Q_{16\rho_0}^-(y,\ft)\big)  \cap \big(Q_{\frac{\rho}{16}}^-(y_2, \ft_k  + \frac{\rho^2}{4}) \big) \big| \geq \mu_1\rho^{n+2}.
\]
The inequality \eqref{eq:weak-harnack-1} then follows from above, together with \eqref{eq:weak-harnack-2} and  $\rho^{n+2} \geq c |\textbf{Q}_{(x,\ft_k)}|$.

Let us then consider the case $  \rho < \sqrt{\fh}$.   The construction implies that $\textbf{Q}_{(x,\ft_k)} = B_\rho(x) \times [\ft_k, \ft_k + \fh)$ with  $B_\rho(x) \times \{ \ft_k\} \cap A_k = \emptyset$ and there is a point $(\tilde y_1,\ft_k) \in A_k$ with $\tilde y_1 \in \partial B_\rho(x)$. Denote again $R = \sqrt{t_1-\ft_k} \geq \rho$. Since $x  \in  B_{R}(y_1)$, we may choose a point $y_2 \in \bar B_{\rho/2}(x)$ such that $B_{\rho/2}(y_2) \subset B_{\rho}(x) \cap B_{R}(y_1)$. Indeed, if $|x-y_1|\geq \frac{\rho}{2}$ we choose $y_2 = x - \frac{\rho}{2}\frac{x-y_1}{|x-y_1|}$, while if $|x-y_1| < \frac{\rho}{2}$ we choose $y_2 = y_1$.   Then it holds $B_{\rho/2}(y_2)  \times \{ \ft_k\} \subset \mathcal{B}^- \setminus A_k$.   We may use the first part of Lemma \ref{lem:basic-micro} for $y =y_2, y_1 = \tilde y_1$ and $\frac{\rho}{2}$ in place of $\rho$ and deduce 
\[
\Ha^n \big( \mathcal{A}\big(L^{k+1}a_0; Q_{16\rho_0}^-(y,\ft)\big) \cap \big( B_{\rho/2}(y_2) \times \{\ft_k\}  \big) \big)  \geq \mu_2 \rho^n. 
\]
Using $B_{\rho/2}(y_2)  \times \{ \ft_k\} \subset \mathcal{B}^- \setminus A_k$ the above  implies 
\[
\big|  \mathcal{A}\big(L^{k+1}a_0; Q_{16\rho_0}^-(y,\ft)\big) \cap \textbf{Q}_{(x,\ft_k)}\cap \mathcal{B}^- \setminus A_k  \big| \geq \mu \rho^n \fh \geq \mu |\textbf{Q}_{(x,\ft_k)}|
\]
and we have  \eqref{eq:weak-harnack-1} in the case  $  \rho < \sqrt{\fh}$.

We are left with the case $\sqrt{\fh} \leq   \rho \leq C_1 \sqrt{\fh}$. As above,  we may choose a point $y_2\in \bar B_{\fh/2}(x)$ such that $B_{\sqrt{\fh}/2}(y_2) \subset B_{\fh}(x) \cap B_{r}(y_1)$, where $r = \sqrt{t_1-\ft_k} \geq \rho$. Then by construction it holds $B_{\sqrt{\fh}/2}(y_2)  \times \{ \ft_k\} \subset \mathcal{B}^- \setminus A_k$, and there is a point $(\tilde y_1,\ft_{k+j}) \in A_k$ with $j \leq C_1+1$ and  $|\tilde y_1-x|^2 \leq (j+1) \fh$. We have  by the choice of $y_2$ that   $|\tilde y_1 -y_2| \leq |\tilde y_1 - x| + |x- y_2| \leq   \sqrt{(j+1)\fh}  + \sqrt{\fh}< (j+2)\sqrt{\fh}$. We have by the second  part of the Lemma \ref{lem:basic-micro}, applied to  $y =y_2, y_1 = \tilde y_1$, that 
\[
\Ha^n \big( \mathcal{A}\big(L^{k+1}a_0; Q_{16\rho_0}^-(y,\ft)\big) \cap \big( B_{\sqrt{\fh}/2}(y_2) \times \{\ft_k\}  \big) \big)  \geq \mu_3 \fh^{\frac{n}{2}}. 
\]
Since  $B_{\frac{\rho}{2}}(y_2) \times  \{\ft_k\} \subset \mathcal{B}^- \setminus A_k$, this implies 
\[
\big|\mathcal{A}\big(L^{k+1}a_0; Q_{16\rho_0}^-(y,\ft))\cap \textbf{Q}_{(x,\ft_k)}\cap \mathcal{B}^- \setminus A_k \big| \geq \mu \fh^{\frac{n}{2}+1}  \geq \mu |\textbf{Q}_{(x,\ft_k)}|
\]
and we have  \eqref{eq:weak-harnack-1} also in the third case    $\sqrt{\fh} \leq   \rho \leq C_1 \sqrt{\fh}$.

We may then apply a variant of the Vitali covering theorem, see e.g. \cite[Proposition 5.2]{AJ},  and choose a countable family of cylinders $\{ \textbf{Q}_i \}_{i\in I}$  from $\textbf{Q}_{(x,\ft_k)}$, which are pairwise disjoint and the larger cylinders   $ \{5\textbf{Q}_i\}_{i\in I}$ cover  $\mathcal{B}^-  \setminus A_k $. Therefore we have by \eqref{eq:weak-harnack-1} 
\[
\begin{split}
\big|\mathcal{B}^- \setminus A_k\big| &\leq \sum_{i \in I} |5\textbf{Q}_i| \leq  C  \sum_{i \in I} |\textbf{Q}_i|\\
&\leq \frac{C}{\mu_3}   \sum_{i \in I}\big|  \big( A_{k+1} \cap \textbf{Q}_{i} \big) \setminus  A_k  \big|  \leq  \frac{C}{\mu_3} \big| A_{k+1} \setminus  A_k  \big|. 
\end{split}
\]
Therefore 
\[
|\mathcal{B}^-  \setminus A_k | = |\mathcal{B}^-  \setminus A_{k+1}| + | A_{k+1} \setminus A_k| \geq |\mathcal{B}^-  \setminus A_{k+1}| + \frac{\mu_3}{C}|\mathcal{B}^-  \setminus A_k |, 
\]
which in turns implies 
\[
|\mathcal{B}^-  \setminus A_{k+1} | \leq \eta |\mathcal{B}^-  \setminus A_{k} | 
\]
for $\eta = 1 - \frac{\mu_3}{C} \in (0,1)$. By iterating this we have $|\mathcal{B}^-  \setminus A_k | \leq \eta^k |\mathcal{B}^- |$ and the estimate \eqref{eq:weak-harnack-0} follows. This concludes the proof. 
\end{proof}

\section{Flatness decay}

\subsection{Oscillation decay}

As in \cite{Ga, Sa, Wa}  the crucial nonlinear estimate that we need is the oscillation estimate which roughly states that the boundary functions $v_r^-$ and $ v_r^+$ defined in \eqref{def:v-r} are close to each other and H\"older continuous. As in  \cite{DGS, Ga}  this does not follow from a standard PDE argument, since  the boundary could have multiple layers where $v_r^- < v_r^+$. This is the case, for instance, when the boundary contains a thin catenoid, and we need the a priori estimates for the Willmore energy  and Proposition \ref{prop:kuwert-schaztle} to rule this out.  We formulate the key estimate that follows from the geometric assumptions in terms of the functions  $v_r^-$ and $v_r^+$. We stress that here we need to assume the dimension to be at most three. In the following lemma we specify carefully the dimension of the ball $B_r^n(x) \subset \R^n$.
\begin{lemma}
\label{lem:key-lemma}
Assume $n \leq 2$ and fix $\delta_0>0$. Let   $v_r^-\leq  v_r^+$ be as in  \eqref{def:v-r}, with $r \leq r_0$, and assume $\|v_r^{\pm}\|_{C^0(Q_1^-)} \leq 1$. Assume further that \eqref{eq:density-point} holds when $n=2$ or \eqref{eq:density-point-pl} holds when $n=1$   at $t_0\geq 1$ and  $ \eps_0 \leq \delta_0^3$. Fix $Q_\rho^-(\hat y,\hat \ft) \subset Q_{1/2}^-$ of size $\rho \geq 12 \eps_0^{\frac13}$ and $\rho \geq r^{\alpha/2}$. When $r_0$ is small enough,  it holds
\[
\big|\{ (y,\ft) \in Q_\rho^-(\hat y, \hat \ft) : v_r^-(y,\ft) < v_r^+(y,\ft) \} \big| \leq \Big(1- \frac{\delta_0}{120}\Big)|Q_\rho^-|.
\]

\end{lemma}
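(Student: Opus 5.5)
The plan is to compare area with projection at the \emph{good} times. At those times the Willmore energy is below $8\pi-\delta_0$ (for $n=2$), or the boundary is a $C^1$-small radial graph over a circle (for $n=1$). The few remaining times are then controlled by the density assumption.

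\textbf{Time bookkeeping.} Undoing the rescaling \eqref{rescale}, the cylinder $Q_\rho^-(\hat y,\hat\ft)\subset Q^-_{1/2}$ corresponds to real times in an interval $J\subset[t_0-r^2,t_0]$ with $|J|=\rho^2r^2$. Take the set $I_{\eps_0}$ from \eqref{eq:density-point-20} and apply it at scale $r$. It gives $|J\setminus I_{\eps_0}|\le \eps_0 r^2$, so the fraction of bad times in $J$ is at most $\eps_0/\rho^2\le \eps_0^{1/3}/144\le \delta_0/144$, using $\rho\ge 12\eps_0^{1/3}$ and $\eps_0\le\delta_0^3$. Since $v_r^\pm$ are piecewise constant in time and $E^h(t)=E^h(t_k)$ on $[t_k,t_{k+1})$, it suffices to prove the following: for each good time $t_k\in I_{\eps_0}\cap J$, the slice $D_k:=\{y\in B_\rho(\hat y): v_r^-(y,\ft_k)<v_r^+(y,\ft_k)\}$ satisfies $\Ha^n(D_k)\le (1-\tfrac{\delta_0}{6\pi})|B_\rho^n|$ (for $n=2$), resp.\ $D_k=\emptyset$ (for $n=1$). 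Adding the two contributions, $(1-\tfrac{\delta_0}{6\pi})+\tfrac{\delta_0}{144}\le 1-\tfrac{\delta_0}{120}$.

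\textbf{Case $n=2$, geometric step.} Fix a good $t_k$ and set $s=\rho r/2$. Take as center the point $x_*=(r\hat y,u_-(r\hat y,t_k))\in\pa E^h(t_k)$, or a nearby boundary point. Tilt $\omega$ to the normal $\omega_*$ of the graph of $P(\cdot,t_k)$ at $r\hat y$, so that the linear part $Ar\hat y$ of $P$ is removed. In the tilted frame the caloric polynomial deviates from its tangent plane by at most $C_1(2s)^2$ on $B_{2s}$. The excess bound $\|v_r^\pm\|_{C^0}\le1$ says every boundary point of $E^h(t_k)$ in $C_{r,r_1}$ lies within $r^{2+\alpha}$ of the graph of $P+\Lambda$. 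Hence
\[
\pa E^h(t_k)\cap B_{2s}(x_*)\subset\{|(x-x_*)\cdot\omega_*|\le Cs^2+Cr^{2+\alpha}\}\subset\{|(x-x_*)\cdot\omega_*|\le s^{1+\alpha}\}.
\]
The last inclusion uses $s\ge r^{1+\alpha/2}/2$ (from $\rho\ge r^{\alpha/2}$), which gives $s^{1+\alpha}\gg r^{2+\alpha}$ and $s^2\ll s^{1+\alpha}$ once $r_0$ is small. Since $t_k$ is good, \eqref{eq:density-point-2} holds, and Proposition \ref{prop:kuwert-schaztle} (with $s\le\rho_0(\alpha,\delta_0)$ for $r_0$ small) gives $\Ha^2(\pa E^h(t_k)\cap B_{s'}(x_*))\le(2\pi-\tfrac{\delta_0}{6})s'^2$ for $s'\le s$ and comparable radii. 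Applying it with $s'\sim\rho r$, after possibly a harmless covering of $B_\rho(\hat y)$ by a bounded number of such balls, gives the area upper bound.

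\textbf{Case $n=2$, the counting argument.} The lower bound comes from a fiber count. Every vertical line over $B^2_{\rho r}(r\hat y)$ meets $\pa E^h(t_k)$: it lies in $E$ below and outside $E$ above. Over $rD_k$ it meets the boundary in at least two points, $u_-<u_+$. By the area formula (the projection has Lipschitz constant $\le1$),
\[
\Ha^2\big(\pa E^h(t_k)\cap \text{cyl}\big)\ge \pi(\rho r)^2+r^2\,\Ha^2(D_k).
\]
The cylinder over $B_{\rho r}$ of height $\ll\rho r$ sits inside a ball of radius $(1+o(1))\rho r$, so combining with the upper bound gives $\Ha^2(D_k)\le(\pi-\tfrac{\delta_0}{6}+o(1))\rho^2$. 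This is the required fraction $1-\tfrac{\delta_0}{6\pi}+o(1)$; the $o(1)$ is absorbed by the slack in the final inequality.

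\textbf{Case $n=1$.} At good times \eqref{eq:density-point-pl3} holds: $\pa E^h(t_k)$ is a radial graph over a circle of radius $\sim1$ with $C^1$-small perturbation. Inside the tiny cylinder $C_{r,r_1}$, whose boundary portion is trapped in a thin slab by the excess, only one arc is present. Its tangent is close to $\omega^\perp$, because the slab is thin relative to the radius $\sim1$. That arc is therefore a single graph, so $u_-=u_+$ and $D_k=\emptyset$.

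\textbf{Main obstacle.} The delicate point is the slab geometry for Proposition \ref{prop:kuwert-schaztle}. One needs a boundary point at the center, and the tilt must give flatness $s^{1+\alpha}$ at the scale $s\sim\rho r$, not at scale $r$. This is exactly where the hypothesis $\rho\ge r^{\alpha/2}$ is used. One must also ensure that $B_{2s}(x_*)$ stays inside $C_{r,r_1}$, so that the excess controls all boundary points there; this holds since $\hat y\in B_{1/2}$.
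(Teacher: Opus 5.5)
Your proposal is correct and follows essentially the paper's route. Bad times are absorbed using $\rho\ge 12\eps_0^{1/3}$ and $\eps_0\le\delta_0^3$. At good times, Proposition \ref{prop:kuwert-schaztle} at scale $\sim r\rho$ is played against a sheet count, with the slab hypothesis coming from the excess together with $\rho\ge r^{\alpha/2}$; your tilt is harmless but unnecessary, since the crude bound already gives $Cr^2\le (r\rho)^{1+\alpha}$. Your count (one sheet over the whole disk plus a second over $D_k$) is slightly sharper than the paper's, which only uses the two graphs $\Gamma^\pm$ over $\Sigma$.

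The one thing to fix is the scale bookkeeping. Drop the ``harmless covering'': summing the $2\pi$-type bound over several balls would destroy the comparison with the single-disk lower bound. Instead, verify the slab condition directly on $B_{2r\rho}(x_*)$, which is legitimate because $B_{2\rho r}(r\hat y)\subset B_r$. Then, as the paper does, discard a thin annulus by shrinking the disk by a factor $1-o(1)$, rather than enlarging the ball beyond radius $r\rho$.
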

\begin{proof}
For simplicity  we assume $(\hat y,\hat \ft) = (0,0)$, since the argument is the same in the general case. 
Recall that for $n=2$ the assumption  \eqref{eq:density-point}  implies \eqref{eq:density-point-20} and \eqref{eq:density-point-2}, while for $n=1$ the assumption \eqref{eq:density-point-pl}  implies  \eqref{eq:density-point-20} and \eqref{eq:density-point-pl3}.   This means that there is  a set $ I_{\eps_0} \subset [t_0-1, t_0]$  such that 
\begin{equation} \label{eq:key-lemma-1}
 | I_{\eps_0} \cap [t_0-r^2,t_0]| \geq (1 - \eps_0)r^2
\end{equation}
and for all  $t \in I_{\eps_0}$ it holds  \eqref{eq:density-point-2} when $n=2$ and   \eqref{eq:density-point-pl3} when $n=1$.

Recall that $t = r^2 \ft +t_0$, let $\rho \in (0,1]$ be as in the statement and define $\mathcal{I}_{\eps_0} = \{ \ft \in [-\rho^2, 0]: t = r^2 \ft +t_0  \in I_{\eps_0}\}$. Since $I_{\eps_0}$ satisfies \eqref{eq:key-lemma-1} it holds 
\[
\big|[-\rho^2,0] \setminus \mathcal{I}_{\eps_0}\big| \leq \frac{1}{r^2} \big|[t_0-r^2,t_0] \setminus  I_{\eps_0}\big| \leq   \eps_0. 
\]
We may thus estimate 
\[
\big|\{ (y,\ft) \in Q_\rho^-:  \ft \notin \mathcal{I}_{\eps_0}   \} \big| \leq \eps_0 |B_\rho^n|. 
\]
The assumptions  $ \eps_0^{\frac13} \leq \delta_0$ and  $\eps_0^{\frac13} \leq \frac{\rho}{12}$ then imply 
\begin{equation} \label{eq:key-lemma-12}
\big|\{ (y,\ft) \in Q_\rho^-:  \ft \notin \mathcal{I}_{\eps_0}   \} \big| \leq \frac{\delta_0}{120}  |Q_\rho^-| .
\end{equation}

Next we claim that if $ \ft  \in\mathcal{I}_{\eps_0} $, then 
\begin{equation} \label{eq:key-lemma-2}
\big|\{ y \in B_\rho^n : v_r^-(y,\ft) < v_r^+(y,\ft)\} \big| \leq \Big(1- \frac{\delta_0}{60}\Big)|B_\rho^n|.
\end{equation}
In order to prove \eqref{eq:key-lemma-2} we state it in the original coordinates  \eqref{rescale}. Fix $ \ft  \in\mathcal{I}_{\eps_0}$ and denote $t =  r^2 \ft +t_0 \in I_{\eps_0}$. We write \eqref{eq:key-lemma-2}  using the functions $u_\pm(\cdot, t) : Q_r^- \to \R$ from \eqref{def:sub-super} as  
\begin{equation} \label{eq:key-lemma-3}
\big|\{ x \in B_{r\rho}^n : u_-(x,t) < u_+(x,t)\} \big| \leq \Big(1- \frac{\delta_0}{60}\Big)|B_{r \rho}^n|.
\end{equation}
We first prove \eqref{eq:key-lemma-3} for $n=2$.

The definition of $v_r^\pm$ in   \eqref{def:v-r},  the  assumption  $\|v_r^{\pm}\|_{C^0(Q_1^-)} \leq 1$ and  $ \rho \geq r^{\alpha/2}$ imply 
\begin{equation} \label{eq:key-lemma-4}
\| u_\pm(\cdot ,t) - \Lambda(t) \|_{C^0(B_{ r}^2)} \leq Cr^2   \leq C (r \rho)^{2-\alpha}. 
\end{equation}
By the assumption it holds $B_{2\rho r}^2 \subset B_{ r}^2$. Recall that we assume  $\alpha \leq \frac18$ which implies  $1+\alpha < 2 -\alpha$. This means that by \eqref{eq:key-lemma-4} we have the second assumption in Proposition \ref{prop:kuwert-schaztle} for the ball $B_{\rho r}^3(z)$ with $z = \Lambda(t) e_3$ when $r$ is small. Therefore since $t \in   I_{\eps_0}$, we have by  \eqref{eq:density-point-2}, by \eqref{eq:key-lemma-4}  and by Proposition \ref{prop:kuwert-schaztle} that 
\begin{equation} \label{eq:key-lemma-44}
\Ha^2\big( \pa E^{h}(t) \cap B_{r \rho}^3(z) \big)  \leq \big(2 \pi - \frac{\delta_0}{6}\big) (r \rho)^2 
\end{equation}
when $r$ is small. 

Let us denote $\Sigma = \{ x \in B_{r\rho}^2 : u_-(x,t) < u_+(x,t)\} \subset \R^2$ and define $\Gamma^\pm \subset \R^3$ as
\[
\Gamma^+ := \big{\{} \big(x,u_+(x,t) - \Lambda(t)\big) :  x \in \Sigma \big{\}} \quad \text{and} \quad \Gamma^- := \big{\{} \big(x,u_-(x,t) - \Lambda(t)\big) :  x \in \Sigma \big{\}}. 
\]
Then $\Gamma^+$ and $ \Gamma^-$ belong to the boundary of the set $E^{h}(t) - \Lambda(t) e_3$.  Moreover,   $\Gamma^+ \cap \Gamma^- = \emptyset $ and for $z = \Lambda(t) e_3$ 
\[
\Ha^2\big( \pa E^{h}(t) \cap B_{r \rho}^3(z) \big) \geq  \Ha^2\big(\Gamma^+ \cap   B_{r \rho}^3 \big) +   \Ha^2\big(\Gamma^- \cap   B_{r \rho}^3 \big) .
\]
Note that  trivially it holds $\Ha^2\big(\Gamma^\pm \big) \geq \Ha^2(\Sigma )$. However, when we project the sets $\Gamma^\pm \cap   B_{r \rho}^3$ we might miss the part of $\Sigma$ which is close to the boundary of the disk 
$B_{r\rho}^2$. However, using   \eqref{eq:key-lemma-4}, which states that $\Gamma^\pm$ are close to the $(x_1,x_2)$-plane,  we may conclude that this part is small. To be more precise, by an 
elementary geometric argument  we deduce from  \eqref{eq:key-lemma-4}  that 
\[
  \Ha^2\big(\Gamma^\pm \cap   B_{r \rho}^3 \big) \geq   \Ha^2\big(\Sigma \big) - C(r \rho)^{3-2\alpha} \geq \Ha^2\big(\Sigma \big) - \frac{\delta_0}{48}(r \rho)^{2}
\]
when $r$ is small. Using  \eqref{eq:key-lemma-44} and the two above inequalities yield 
\[
\begin{split}
\big(2 \pi - \frac{\delta_0}{6}\big) (r \rho)^2 &\geq \Ha^2\big( \pa E^{h}(t) \cap B_{r \rho}^3(z) \big) \\
&\geq  \Ha^2\big(\Gamma^+ \cap   B_{r \rho}^3 \big) +   \Ha^2\big(\Gamma^- \cap   B_{r \rho}^3 \big) \\
&\geq 2 \Ha^2\big(\Sigma \big) -   \frac{\delta_0}{24}(r \rho)^{2} . 
\end{split}
\] 
Therefore we have 
\[
2 \Ha^2(\Sigma ) \leq \big(2\pi - \frac{\delta_0}{6}\big) (r \rho)^2  +  \frac{\delta_0}{24}(r \rho)^{2} =  2 \pi\big(1 - \frac{\delta_0}{16\pi}\big)(r \rho)^{2}, 
\]
which implies \eqref{eq:key-lemma-3} in the case $n=2$.

In the planar  case $n=1$ we obtain  \eqref{eq:key-lemma-3}  trivially for all $t \in I_{\eps_0}$, because  \eqref{eq:density-point-pl} implies  \eqref{eq:density-point-pl3}. Since the boundary  $\partial E^h(t)$ is  a graph of a spherical function with small gradient, and since the excess is small, this 
immediately implies that the set $\{ x \in B_{r\rho}^1 : u_-(x,t) < u_+(x,t)\} $ is empty and we  trivially have  \eqref{eq:key-lemma-3}. 

Finally, combining \eqref{eq:key-lemma-12} and \eqref{eq:key-lemma-2} yields
\[
\begin{split}
\big|\{ (y,\ft) \in Q_\rho^- : &v_r^-(y,\ft) < v_r^+(y,\ft) \} \big| \\
 &\leq \big|\{ (y,\ft) \in Q_\rho^-:  \ft \notin \mathcal{I}_{\eps_0}   \} \big|  + \big|\{ (y,\ft)  \in B_\rho^2 \times \mathcal{I}_{\eps_0} : v_r^-(y,\ft) < v_r^+(y,\ft) \} \big|\\
&\leq \frac{\delta_0}{120}  |Q_\rho^-|  +\rho^2 \Big(1- \frac{\delta_0}{60}\Big)|B_\rho^n| =  \Big(1- \frac{\delta_0}{120}\Big)|Q_\rho^-| .\qedhere
\end{split}
\]
\end{proof}

We may proceed to  the oscillation decay. We define the oscillation in a cube $Q_\rho^-(y,\ft)$ as
\begin{equation} \label{def:osc-v-r}
\text{osc} \big(v_r^\pm;  Q_\rho^-(y,\ft)\big) =  \sup_{Q_\rho^-(y,\ft)} v_r^+ - \inf_{Q_\rho^-(y,\ft)}v_r^-, 
\end{equation}
where $v_r^\pm$ are defined in  \eqref{def:v-r}.  

If we would we able to prove  $\text{osc} \big(v_r^\pm;  Q_\rho^-(y,\ft)\big) \leq C \rho^\gamma$ in every cube, then we would be able to conclude  that $v_r^-(y,\ft) = v_r^+(y,\ft)$, which means that the boundaries of the evolving sets $E^h(t)$ are locally given by graph of functions $u_-(\cdot, t)$ defined in \eqref{def:sub-super}. Moreover this estimate would   imply  that $v_r^-$ is H\"older continuous. However,  we may only prove    $\text{osc} \big(v_r^\pm;  Q_\rho^-(y,\ft)\big) \leq C \rho^\gamma$  for $\rho \geq \eps_1$, where $\eps_1$ a small number depending on $r, \delta_0$ and $\eps_0$ in the statement of  Theorem \ref{thm:decay-flatness}. Such an estimate is enough for the flatness decay.

\begin{proposition}
\label{peop:osc-decay}
Assume $n \leq 2$ and fix $\delta_0>0$. Let   $v_r^-\leq  v_r^+$ be as in  \eqref{def:v-r}, with $C_0 \sqrt{\fh} \leq r \leq r_0$, and assume $\|v_r^{\pm}\|_{C^0(Q_1^-)} \leq 1$. 
Assume further that \eqref{eq:density-point} holds when $n=2$ or \eqref{eq:density-point-pl} holds when $n=1$   at $t_0\geq 1$ and  $ \eps_0 \leq \delta_0^3$. There are  constants $C \geq 1$ and $\gamma \in (0,1)$ such that  for every cylinder $Q_\rho^-(\hat y,\hat \ft) \subset Q_{\frac78}^-$ it holds 
\[
\text{osc} \big(v_r^\pm;  Q_\rho^-(\hat y,\hat \ft)\big) \leq C \rho^\gamma
\]
for all $\rho \geq \max \{ 12 \eps_0^{\frac13}, r^{\alpha/2}\}$ when $r_0, \eps_0$ are small enough. In particular, $\|v_r^+ - v_r^-\|_{C^0(Q_{\frac34}^-)} \leq C(\eps_0 + r_0)^{\gamma'}$ for $\gamma' = \frac{\alpha}{2} \gamma$. 
\end{proposition}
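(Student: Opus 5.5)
The plan is to prove a one-step oscillation decay and iterate it, in the spirit of Savin/Wang/De Philippis--Gasparetto--Schulze. Concretely, I aim for: there is $\theta\in(0,1)$ such that for every cylinder $Q_{16\rho}^-(\hat y,\hat\ft)\subset Q_{\frac78}^-$ with $\rho\ge\max\{12\eps_0^{1/3},r^{\alpha/2}\}$, after shifting time by $-8\rho^2$,
\[
\text{osc}\big(v_r^\pm;Q_{\rho}^-(\hat y,\hat\ft-8\rho^2)\big)\le (1-\theta)\,\text{osc}\big(v_r^\pm;Q_{16\rho}^-(\hat y,\hat\ft)\big)+C r^{2-2\alpha}\rho^{2}+ C r^{-2\alpha}\!\int |\lambda^h|,
\]
where the last two terms come from the perturbation $w_r^\pm-v_r^\pm$ in \eqref{def:w-r}. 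By Proposition \ref{prop:apriori-est} (iii), the difference $w_r^\pm-v_r^\pm$ is controlled on $Q_\rho^-$ by $r^{-2\alpha}\cdot r\rho\cdot C = C r^{1-2\alpha}\rho$, which is negligible compared with $\rho^\gamma$ once $\gamma<1$ and $r$ is small. Iterating the step over a geometric sequence of scales, and rearranging the time shifts by covering, gives $\text{osc}\le C\rho^\gamma$ down to the threshold scale. The final statement then follows by taking $\rho=\max\{12\eps_0^{1/3},r^{\alpha/2}\}$ at every point of $Q_{3/4}^-$, since $v_r^+-v_r^-\le\text{osc}$.

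For the one-step decay, set $M=\sup_{Q_{16\rho}^-}v_r^+$ and $m=\inf_{Q_{16\rho}^-}v_r^-$, normalized so that the oscillation is $\omega=M-m$. Consider the two nonnegative functions $w^-:=w_r^--m'$ and $w^+:=M'-w_r^+$, where $m',M'$ are the inf/sup of the $w$'s. These are respectively supersolution-type and (symmetrically) subsolution-type objects to which Proposition \ref{prop:weak-harnack} applies, after rescaling by $\omega$ so that the smallness $\inf\le m_0$ holds. The rescaling is by $2/(m_0\omega)$-type factors, which changes $a$ in the range $r^\alpha\le a\le r^{-\alpha}$ only harmlessly while $\omega\ge\rho^\gamma\gg r^{\alpha}$; otherwise the estimate is already true.

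Suppose by contradiction that both infima over the upper cylinder $Q_\rho^-(\hat y,\hat\ft)$ are at most $\theta\omega$. Weak Harnack then gives that $\{w^-\le C_0\theta\omega\}$ and $\{w^+\le C_0\theta\omega\}$ each occupy at least $(1-\delta_0/300)$ of $Q_\rho^-(\hat y,\hat\ft-8\rho^2)$. Hence their intersection has measure at least $(1-\delta_0/150)|Q_\rho^-|$. On that intersection $v_r^+-v_r^-\ge \omega-2C_0\theta\omega-o(1)>0$ for $\theta$ small, so the set $\{v_r^-<v_r^+\}$ has density at least $1-\delta_0/150$. This contradicts Lemma \ref{lem:key-lemma}, which is applicable because $\rho$ is at least $12\eps_0^{1/3}$ and at least $r^{\alpha/2}$; this is precisely why the scale threshold appears. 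Therefore one of the infima is at least $\theta\omega$, which lowers $M$ or raises $m$ by $\theta\omega$ on the smaller cylinder.

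The main obstacle is bookkeeping rather than an idea. First, the contradiction step requires the two level-set estimates to sit on the same lower cylinder, which is why the time shift $-8\rho^2$ in Proposition \ref{prop:weak-harnack} must be tracked through the iteration. To handle this I will prove the decay for the backward cylinder family and note that the sequence of nested cylinders $Q_{\rho_k}^-$ can be chosen centered at any point, with parents containing the shifted children. Second, I must ensure the hypotheses of Proposition \ref{prop:weak-harnack}, in particular $\rho\ge r^{\alpha/2}$ and the slope range, survive the normalization. I would check that the $w$-perturbation error $Cr^{1-2\alpha}$ is at most $\theta\omega/4$ whenever $\omega\ge\rho^\gamma$ with $\gamma\le\alpha/2$, so choosing $\gamma$ small relative to $\alpha$ and $|\log(1-\theta)|$ closes the induction.
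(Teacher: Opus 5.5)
Your proposal is correct and follows the paper's proof essentially step for step: a one-step decay by contradiction, applying Proposition~\ref{prop:weak-harnack} to $w_r^-$ and to the reflected $M-w_r^+$, contradicting Lemma~\ref{lem:key-lemma} on the lower cylinder $Q_\rho^-(\hat y,\hat\ft-8\rho^2)$, absorbing the $O(r^{1-2\alpha}\rho)$ discrepancy between $w_r^\pm$ and $v_r^\pm$ (the paper uses an additive $8\rho$ where you use the threshold $\omega\ge\rho^\gamma$), and then iterating.

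Two points can be simplified. First, your contradiction hypothesis is about infima over the upper cylinder, so the argument gives decay on the concentric cylinder $Q_\rho^-(\hat y,\hat\ft)\subset Q_{16\rho}^-(\hat y,\hat\ft)$, as in the paper, and not on the shifted cylinder you wrote; no time-shift bookkeeping is needed. Second, you do not need to rescale by $\omega$, and Proposition~\ref{prop:weak-harnack} as stated would not cover a rescaled function anyway. Since $\omega\le 2+o(1)$, choosing $\theta$ small already gives $\theta\omega\le m_0$, just as the paper uses $M\le 1+\sqrt r\rho$.
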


\begin{proof}
We may assume that $(\hat y,\hat \ft) = (0,0)$. Let $w_r^-$ and $w_r^+$ be as  in \eqref{def:w-r}.  By definition  it holds  $w_r^\pm(x, t) :=  v_r^\pm(x, t)  \pm  g(t)$ with $g(t)  = r^{-2\alpha} \int_{r^2t }^0 |\lambda^h(\tau+t_0+h)|\, d \tau$. By Proposition \ref{prop:apriori-est} it  holds  
$\int_{t_0-1}^{t_0+1} |\lambda^h(\tau)|^2, d \tau \leq C$, and therefore $g$ is H\"older continuous and 
\begin{equation} \label{eq:osc-decay-11}
\sup_{t \in (-\rho^2, 0]} |g(t)|  \leq r^{-2\alpha} \left( \int_{t_0-1}^{t_0+1} |\lambda^h(\tau)|^2\, d \tau  \right)^{\frac12} \left( (r \rho)^2 \right)^{\frac12}\leq C r^{1-2\alpha} \rho \leq \sqrt{r} \rho,
\end{equation}
when $\alpha \leq \frac18$ and $r$ is small enough. Let us denote 
\[
\text{osc} \big(w_r^\pm;  Q_{\rho}^-\big): =  \sup_{Q_\rho^-} w_r^+ - \inf_{Q_\rho^-}w_r^-.
\]
Since $w_r^\pm(x, t) :=  v_r^\pm(x, t)  \pm  g(t)$, by \eqref{eq:osc-decay-11} it is  enough to prove that $\text{osc} (w_r^\pm;  Q_\rho^- ) \leq C \rho^\gamma$ for  $\rho \geq \max \{ 12 \eps_0^{\frac13}, r^{\alpha/2}\}$. 

 We claim that there is $\eta \in (0,1)$ such that if $Q_{16 \rho}^- \subset  Q_{\frac78}^-$ then 
\begin{equation} \label{eq:osc-decay-2}
\text{osc} \big(w_r^\pm;  Q_{\rho}^-\big)   \leq \eta \,  \text{osc} \big(w_r^\pm;  Q_{16\rho}^-\big)  +  8 \rho
\end{equation}
for every $\rho \geq \max \{ 12 \eps_0^{\frac13}, r^{\alpha/2}\}$. The claim  then  follows from \eqref{eq:osc-decay-2} by standard iteration. 

To this aim we denote $M = \sup_{Q_{16\rho}^-} w_r^+ $ and by shifting the functions we may assume $\inf_{Q_{16\rho}^-} w_r^- = 0$. If $M \leq  8 \rho$ then \eqref{eq:osc-decay-2} is trivially true. Let us then assume $M > 8 \rho $.  We need to prove that either
\begin{equation} \label{eq:osc-decay-3}
\sup_{Q_{\rho}^-} w_r^+ \leq \eta M \qquad \text{or} \qquad \inf_{Q_{\rho}^-} w_r^- \geq (1-\eta) M. 
\end{equation}

We argue by contradiciton and assume that neither of the inequalities in \eqref{eq:osc-decay-3} is true. Since $\inf_{Q_{16\rho}^-} w_r^- = 0$, then $w_r^-$ is nonnegative in $Q_{16\rho}^-$ and the contradiction assumption implies 
$ \inf_{Q_{\rho}^-} w_r^- \leq (1-\eta) M$. Moreover by $\|v_r^{+}\|_{C^0(Q_1^-)} \leq 1$ and by \eqref{eq:osc-decay-11} it holds $M \leq 1+ \sqrt{r} \rho$. Therefore when $\eta$ is small enough,  it holds  $ \inf_{Q_{\rho}^-} w_r^-  \leq (1-\eta)M < m_0$, where $m_0$ is from Proposition \ref{prop:weak-harnack}. We may apply Proposition \ref{prop:weak-harnack} to deduce that there is $C_0$ such that 
\begin{equation} \label{eq:osc-decay-4}
\big|\big{\{} (y,\ft) \in   Q_{\rho}^-(0,-8\rho^2)  : \,   w_r^-(y,\ft) \leq C_0 (1-\eta) M   \big{\}} \big| \geq  \big( 1-\frac{\delta_0}{300} \big) |Q_{\rho}^-|. 
\end{equation}
On the other hand, if $\sup_{Q_{\rho}^-} w_r^+ \geq \eta M$, then we may apply the same estimate for function $M - w_r^+$ and conclude 
\begin{equation} \label{eq:osc-decay-5}
\big|\big{\{} (y,\ft) \in   Q_{\rho}^-(0,-8\rho^2)  : \,  M- w_r^+(y,\ft) \leq C_0 (1-\eta) M   \big{\}} \big| \geq  \big( 1-\frac{\delta_0}{300} \big) |Q_{\rho}^-|. 
\end{equation}

When  $\eta$ is close  to one, it holds $C_0 (1-\eta) \leq \frac18$. Then $w_r^-(y,\ft) \leq C_0 (1-\eta) M \leq \frac18 M$. Recall that   $w_r^-(y,\ft) :=  v_r^-(y,\ft)  - g(t)$  and by \eqref{eq:osc-decay-11} $\sup_{t \in (-\rho^2, 0]} |g(t)| \leq \rho$. Since we assume $M > 8 \rho$, then   $w_r^-(y,\ft)  \leq \frac18 M$ implies $v_r^-(y,\ft) < \frac14 M$, and by \eqref{eq:osc-decay-4} we have
\[
\big|\big{\{} (y,\ft) \in   Q_{\rho}^-(0,-8\rho^2)  : \,   v_r^-(y,\ft) < \frac14 M   \big{\}} \big| \geq  \big( 1-\frac{\delta_0}{300} \big) |Q_{\rho}^-|.
\]
Similarly  \eqref{eq:osc-decay-5} implies 
\[
\big|\big{\{} (y,\ft) \in   Q_{\rho}^-(0,-8\rho^2)  : \,   v_r^+(y,\ft) > \frac34 M    \big{\}} \big| \geq  \big( 1-\frac{\delta_0}{300} \big) |Q_{\rho}^-|. 
\]
In particular, these give
\[
\big|\big{\{} (y,\ft) \in   Q_{\rho}^-(0,-8\rho^2)  : \, v_r^-(y,\ft)  <  v_r^+(y,\ft)  \big{\}} \big| \geq  \big( 1-\frac{\delta_0}{150} \big) |Q_{\rho}^-|. 
\]
But this contradicts Lemma \ref{lem:key-lemma} and the claim \eqref{eq:osc-decay-3} follows. 
\end{proof}

\subsection{Flatness decay}

We proceed to the proof of Theorem \ref{thm:decay-flatness}. 

\begin{proof}[\textbf{Proof of Theorem \ref{thm:decay-flatness}}]

Let $A, \omega$ and $c$ be the parameters  in the excess \eqref{def:excess}, and we may choose the coordinates such that $\omega = e_{n+1}$ and $x_0= 0$. We associate  the space  $\Pi_{e_{n+1}}$ defined in \eqref{def:projection} with $\R^n$.  
Let $u_-$ and $u_+$ be the sub- and supergraphs defined in \eqref{def:sub-super}. 

\textbf{Step 1:} \, We claim first that there is $\sigma \in (0,1)$ such that 
\begin{equation} \label{eq:flatness-decay-1}
\|u_{\pm}(x',t_k)  - \Lambda(t_k) - \tilde P(x',t_k) -  p \cdot x' \|_{C^0(B_{\sigma r})} \leq \sigma^{3-\alpha} r^{2+\alpha} \quad \text{for all }\, t_k \in (t_0 - \sigma^2 r^2, t_0]
\end{equation}
for a caloric polynomial $\tilde P(\cdot, t) : \R^n \to \R$,  $\tilde P(x',t) = \frac12 \tilde A x' \cdot  x'  +\tilde b t +\tilde c$, with $|\tilde A - A| \leq c_0 r^\alpha,  |\tilde c - c| \leq c_0r^{2+\alpha}$ and a vector $ p \in \R^n$ with   $| p| \leq c_0r^{1+\alpha}$.  We remark that the claim does not follow from  \eqref{eq:flatness-decay-1} because of the additional  linear term $ p \cdot x' $. 

 We define the functions $v_r^+$ and $v_r^-$ as in \eqref{def:v-r}. The inequality \eqref{eq:flatness-decay-1} follows once we show that 
\begin{equation} \label{eq:flatness-decay-2}
\|v_r^{\pm}(y,\ft)   - \hat P(y,\ft) -\hat  p \cdot y \|_{C^0(Q_{\sigma }^-)} \leq \sigma^{3-\alpha}  
\end{equation}
for a caloric polynomial $\hat  P(x,t) = \frac12   \hat A x \cdot  x  +\hat b t +\hat c$, with $|\hat A|, |\hat p|, |\hat c| \leq c_0$. The constant $c_0$ is a number that does not depend on any parameter and its value will be clear from the proof.

Proposition \ref{peop:osc-decay} implies $\|v_r^+ - v_r^-\|_{C^0(Q_{\frac34}^-)} \leq C(\eps_0 + r_0)^{\gamma'}$. Therefore if we denote $\eps_1 =  \max \{ 12 \eps_0^{\frac13}, r_0^{\alpha/4}\}$, we have 
\begin{equation} \label{eq:flatness-decay-31}
v_r^+ \leq  v_r^- + \eps_1^{\gamma'} \qquad \text{in } \quad Q_{\frac34}^-. 
\end{equation}
Let $\varphi$ be the solution of 
\[
\begin{cases}
\pa_t \varphi = \Delta \varphi \,  \quad \text{in } \,  Q_{\frac34}^-, \\
\varphi  = v_r^-  \,  \quad \text{on } \, \pa_p Q_{\frac34}^-,
\end{cases}
\]
where $\pa_p Q_{\frac34}^-$ denotes the parabolic boundary. Proposition \ref{peop:osc-decay} yields $\text{osc} \big(v_r^-;  Q_\rho^-(\hat y,\hat \ft)\big) \leq C \rho^\gamma$ for $\rho \geq \eps_1$ and therefore by standard regularity theory  for heat equation we have, by arguing as in \cite[Proposition 5.2]{Wa}, that 
\begin{equation} \label{eq:flatness-decay-32}
v_r^- - C \eps_1^{\gamma}  \leq \varphi \leq  v_r^- + C \eps_1^{\gamma}   \quad \text{in }  \, \bar Q_{\frac34}^-\setminus Q_{\frac34 - \eps_1}^-. 
\end{equation}
and
\begin{equation} \label{eq:flatness-decay-5}
 \eps_1^2|\nabla^2 \varphi| +   \eps_1^4 |\pa_{tt}^2 \varphi| +  \eps_1^3 |\pa_{t} \nabla \varphi|  \leq C  \quad \text{in }  \, \bar Q_{\frac34 - \eps_1}^-, 
\end{equation}
by increasing $C$ if necessary. 

Denote $w_r^\pm$ the functions defined in \eqref{def:w-r}, and recall that  $w_r^\pm(x, t) :=  v_r^\pm(x, t)  \pm  g(t)$, where by \eqref{eq:osc-decay-11} it holds $\sup_{t \in (-1,0]} |g(t)| \leq \sqrt{r}$. 
Therefore we have by \eqref{eq:flatness-decay-31} and \eqref{eq:flatness-decay-32} that (recall that $\gamma' < \gamma$)
\begin{equation} \label{eq:flatness-decay-4}
w_r^+ - 2C \eps_1^{\gamma'}  \leq \varphi \leq  w_r^- + 2C \eps_1^{\gamma'}   \quad \text{in }  \, \bar Q_{\frac34}^-\setminus Q_{\frac34 - \eps_1}^-. 
\end{equation}

We claim that 
\begin{equation} \label{eq:flatness-decay-6}
w_r^+ - 4C \eps_1^{\gamma'}  \leq \varphi \leq  w_r^- + 4C \eps_1^{\gamma'}   \quad \text{in }  \,  Q_{\frac34 - \eps_1}^-.
\end{equation}
We only prove the latter inequality, as the first one follows from the same argument. We define $\tilde \varphi(x,t) = \varphi(x,t) - \eps_1 t$, notice that $\pa_t \tilde  \varphi - \Delta \tilde \varphi = -\eps_1$,  and claim that 
\[
\tilde \varphi(y,\ft_k) \leq  w_r^-(y,\ft_k) + 3C \eps_1^{\gamma'}   \quad \text{for all  }  \,  (y,\ft_k) \in  Q_{\frac34 - \eps_1}^-.
\]
By \eqref{eq:flatness-decay-4} this holds  in $Q_{\frac34}^-\setminus Q_{\frac34 - \eps_1}^-$. In order to prove the above inequality in $Q_{\frac34 - \eps_1}^-$, we claim that the maximum of the function $(y,\ft_k) \mapsto \tilde \varphi(y,\ft_k) - w_r^-(y, \ft_k)$  is attained in $\bar Q_{\frac34}^-\setminus Q_{\frac34 - \eps_1}^-$. Indeed, if this were not the case, then the function $\tilde \varphi +c'$, for some constant $c'$, would touch $w_r^-$ from below at some point $(y,\ft_k) \in Q_{\frac34 - \eps_1}^-$. The estimate \eqref{eq:flatness-decay-5} and $\eps_1 \geq r_0^{\alpha/4} \geq r^{\alpha/4} $ imply  $r^\alpha \|\tilde \varphi\|_{C_x^2} \leq 1$. Therefore Lemma \ref{lem-visco1}, applied to a slightly smaller cylinder, and the estimates \eqref{eq:flatness-decay-5} imply 
\[
\begin{split}
-\eps_1 = \partial_t \tilde \varphi(y, \ft_k) - \Delta \tilde \varphi(y, \ft_k) &\geq - Cr^{2-\alpha} -  \fh \|\partial_{tt}^2 \tilde \varphi \|_{C^0} - C \sqrt{\fh} \|\partial_{t} \nabla \tilde  \varphi \|_{C^0}\\
&\geq -Cr^{2-\alpha} - C \eps_1^{-4}   \fh  - C  \eps_1^{-3} \sqrt{  \fh }\\
&\geq  -Cr^{2-\alpha} - \frac{\eps_1}{2}
\end{split}
\]
when $\fh= \frac{h}{r^2} \leq \frac{1}{C_0^2}$ is small enough.  This yields $2Cr^{2-\alpha}  \geq \eps_1$, which is a contradiction when $r$ is small enough, since $\eps_1\geq r^{\alpha/4}$. This completes the proof of \eqref{eq:flatness-decay-6}. 

We use the fact that $\varphi$ is a solution of  heat equation and deduce that  there is a caloric polynomial  $\hat  P(x,t) = \frac12 \hat A x \cdot x  +\hat b t  +\hat  p\cdot x +\hat c$, with $|\hat A|, |\hat p|, |\hat c| \leq c_0$, such that 
\[
\|\varphi - \hat  P\|_{C^0(Q_{\sigma}^-)} \leq C \sigma^3 \qquad \text{for some small  } \, \sigma \in (0,1).
\] 
Using this with \eqref{eq:flatness-decay-6}    implies the estimate \eqref{eq:flatness-decay-2} when  $\eps_1$ is small enough.

\textbf{Step 2:} \, In order to prove the claim, we need to find new coordinates where the linear term $p\cdot x'$ in \eqref{eq:flatness-decay-1} vanishes. This follows from elementary linear algebra, but we give the argument for the reader's convenience.  We do this in $\R^3$, as the planar case follows from the same argument.  In this step we denote by $x \in \R^3$ a point in 3D and  claim that we may change the coordinate basis of $\R^3$ from $\{e_1,e_2, e_3\}$ to $\{\hat e_1, \hat e_2, \hat e_3\}$ such that in the new coordinates it holds for $x= \hat  x_1 \hat e_1 + \hat  x_2 \hat e_2 +\hat  x_3 \hat e_3 \in \pa E^h(t) \cap  C_{\sigma r, r_1}$ and all $t \in (t_0-\sigma^2 r^2,t_0]$  that
\begin{equation} \label{eq:flatness-decay-7}
\| \hat x_3 -   \Lambda(t) - \tilde{P} \big((\hat x_1, \hat x_2),t\big)\| \leq \sigma^{2+\alpha} r^{2+\alpha}, 
\end{equation}    
where $\tilde P$ is the polynomial in \eqref{eq:flatness-decay-1}.  The inequality \eqref{eq:flatness-decay-7}  then concludes the proof. 

If the vector $p \in \R^2$ in \eqref{eq:flatness-decay-1}  is zero then there is nothing to prove. Otherwise we choose the first two basis vectors  for $\R^3$ as $e_1 = \frac{p}{|p|}$ and $e_2$ orthogonal to it.   We define the matrix 
\[
T = \frac{1}{\sqrt{1+ |p|^2}} \begin{pmatrix}
1 & 0 & -|p| \\
0 & \sqrt{1+ |p|^2} & 0 \\
|p| & 0 & 1
\end{pmatrix}
\]
and choose the new basis vectors as $\hat e_i = Te_i$ for $i = 1,2,3$. 

In order to prove \eqref{eq:flatness-decay-7} we fix $t \in (t_0-\sigma^2 r^2,t_0]$ and $x \in \pa E^h(t) \cap  C_{\sigma r, r_1}$. We write the point $x$ in the old coordinates as $x = \tilde x_1  e_1 +\tilde x_2  e_2 + \tilde x_3 e_3$ and in the new coordinates as $x= \hat  x_1 \hat e_1 + \hat  x_2 \hat e_2 +\hat  x_3 \hat e_3$, and   denote $\Lambda = \Lambda(t)$ and $\tilde P = \tilde P\big((\hat x_1, \hat x_2),t\big)$ for short. We choose    $x_P = \hat x_1 \hat e_1 + \hat x_2 \hat e_2 + (\Lambda +\tilde P) \hat e_3$ and observe that the inequality  \eqref{eq:flatness-decay-7} is equivalent to   $|x- x_P| \leq  \sigma^{2+\alpha} r^{2+\alpha}$.

 We may  relate the old coordinates  $x =  \tilde x_1  e_1 +\tilde x_2  e_2 + \tilde x_3 e_3$   with the new ones by 
\begin{equation} \label{eq:flatness-decay-8}
Tx = T  \begin{pmatrix} \hat x_1\\ \hat x_2 \\ \hat x_3 \end{pmatrix} = \frac{1}{\sqrt{1+ |p|^2}} \begin{pmatrix} \hat x_1 - |p|\hat x_3\\ \sqrt{1+ |p|^2} \hat x_2 \\ \hat x_3 + |p|\hat x_1 \end{pmatrix}  =  \begin{pmatrix} \tilde x_1 \\ \tilde x_2 \\ \tilde x_3  \end{pmatrix}.
\end{equation}
Then $\hat x_2 = \tilde x_2$. We use \eqref{eq:flatness-decay-8} with  $|p| \leq C r^{1+\alpha}$, $|\Lambda(t)| \leq Cr$ and $|\hat x_i| \leq C r$ for $i =1, 3$, which follow from \eqref{eq:flatness-decay-1},    to deduce that $|\hat x_i-\tilde x_i| \leq Cr^{2+\alpha}$  for $i = 1,3$, and 
\[
|\hat x_1 - (\tilde x_1 +p|\tilde x_3|)| \leq C r^{3+2\alpha} \quad \text{and} \quad |\hat x_3 - (\tilde x_3 - p|\tilde x_1|)| \leq C r^{3+2\alpha} .
\]
In particular, these imply $|\tilde P - \tilde P\big((\tilde x_1,\tilde x_2), t\big)| \leq Cr^{3+\alpha}$. Trivially it also holds  $|\tilde P| \leq Cr^2$.  Therefore  we may estimate  
\begin{equation} \label{eq:flatness-decay-9}
Tx_P =   T  \begin{pmatrix} \hat  x_1\\ \hat x_2 \\  \Lambda +\tilde P \end{pmatrix} = \frac{1}{\sqrt{1+ |p|^2}} \begin{pmatrix} \hat x_1 - |p| (\Lambda + \tilde P)\\ \sqrt{1+ |p|^2} \hat x_2 \\  \Lambda +\tilde P + |p|\hat x_1 \end{pmatrix} = \begin{pmatrix} \tilde x_1 + |p|(\tilde x_3 - \Lambda - \tilde P)\\ \tilde x_2 \\  \Lambda +\tilde P + |p|\tilde x_1 \end{pmatrix} +R,
\end{equation}
for a remainder which satisfies $|R| \leq Cr^{2+2\alpha}$. By the choice of the vector $e_1$ it holds $|p|\tilde x_1  = p \cdot  x'$ for $x' = \tilde x_1 e_1 + \tilde x_2 e_2$. Since $x = x' +\tilde x_3e_3  \in \pa E^h(t) \cap  C_{\sigma r, r_1} $ we have by  \eqref{eq:flatness-decay-1} that 
\[
\big|\tilde x_3 -  \Lambda - \tilde P\big((\tilde x_1, \tilde x_2), t\big)-  |p|\tilde x_1 \big| \leq \sigma^{3-\alpha} r^{2+\alpha}. 
\]
Note that this together with $|\tilde P - \tilde P\big((\tilde x_1,\tilde x_2), t\big)| \leq Cr^{3+\alpha}$  implies also that 
\[
|p|\big|\tilde x_3 - \Lambda - \tilde P \big| \leq C r^{1+\alpha} \big( \big|\tilde x_3 -  \Lambda - \tilde P  -   |p||\tilde x_1| \big| + |p||\tilde x_1|\big) \leq   Cr^{3+2\alpha}. 
\]
By combining \eqref{eq:flatness-decay-8} and \eqref{eq:flatness-decay-9}, and using the two previous inequalities,  we deduce that 
\[
|x - x_P| = |Tx - Tx_P| \leq \sigma^{3-\alpha} r^{2+\alpha} + Cr^{2+2\alpha} \leq \sigma^{2+\alpha} r^{2+\alpha}
\]
when $r \leq r_0$ is small enough.  This concludes the proof. 
\end{proof}

\section{Final regularity estimate and the proof of the main theorem}

\subsection{Final regularity estimate}

In this section we conclude the proof of the regularity result by proving that the set $E^h(t_0)$ is $C^2$-regular near the point $x_0$. This will follow from  Theorem \ref{thm:decay-flatness} together with results from minimal surfaces. 

 We first recall the standard Schauder estimate. 
\begin{proposition}
\label{prop:schauder}
Assume $u :B_2 \to \R$ is a bounded classical solution of  the equation
\[
\text{Tr}\left(A(x)\nabla^2 u \right) = f \qquad \text{in } \, B_2, 
\]
where $A(\cdot)$ is uniformly elliptic and $\|A(\cdot)\|_{C^{\beta}} \leq C$ for $\beta \in (0,1)$.  If $f$ is bounded, then for every $\gamma \in (0,1)$ it holds 
\begin{equation}\label{prop:schauder-1}
\|u\|_{C^{1+\gamma}(B_1)} \leq C \big( \|u\|_{C^0(B_2)} +\|f\|_{C^0(B_2)}   \big).
\end{equation}
If $f$ is $\beta$-H\"older continous  then it holds
\begin{equation}\label{prop:schauder-2}
\|u\|_{C^{2+\beta}(B_1)} \leq C \big( \|u\|_{C^0(B_2)} +\|f\|_{C^\beta(B_2)}   \big).
\end{equation}
\end{proposition}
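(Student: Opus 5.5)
This is the classical Schauder theory for non-divergence form equations, and I would prove it by freezing coefficients and perturbing, as in Gilbarg--Trudinger or Caffarelli--Cabr\'e \cite{CC}. The constants depend on $n$, $\beta$, $\gamma$, the ellipticity constants, and the bound $\|A(\cdot)\|_{C^\beta}\le C$.

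\textbf{The $C^{2+\beta}$ estimate \eqref{prop:schauder-2}.} Fix $x_0\in B_1$ and write
\[
\trace\big(A(x_0)\nabla^2 u\big) = f + \trace\big((A(x_0)-A(x))\nabla^2 u\big).
\]
A linear change of variables turns $\trace(A(x_0)\nabla^2\cdot)$ into the Laplacian. For harmonic functions, comparison with the harmonic replacement on dyadic balls $B_{\rho}(x_0)$ gives a Campanato-type decay: the second-order Taylor polynomial approximates $u$ with error $\rho^{2+\beta}$, up to the size of the right-hand side. The perturbation term is bounded on $B_\rho(x_0)$ by $C\rho^\beta\|\nabla^2 u\|_{C^0}$. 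Summing the dyadic decay gives
\[
[\nabla^2 u]_{C^\beta(B_{1})} \le C\big(\|f\|_{C^\beta} + \|u\|_{C^0} + \|\nabla^2 u\|_{C^0}\big),
\]
with a small constant in front of the Hessian term on smaller balls. This is done on nested balls $B_s\subset B_{s'}\subset B_2$, with the weighted seminorms carrying powers of $(s'-s)$.

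The lower-order term $\|\nabla^2u\|_{C^0}$ is removed by the interpolation inequality
\[
\|\nabla^2 u\|_{C^0}\le \eps[\nabla^2u]_{C^\beta}+C_\eps\|u\|_{C^0}.
\]
The standard absorption lemma for functions $\Phi(s)$ on $[1,2]$ satisfying
\[
\Phi(s)\le \tfrac12\Phi(s')+C(s'-s)^{-k}
\]
then yields \eqref{prop:schauder-2}.

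\textbf{The $C^{1+\gamma}$ estimate \eqref{prop:schauder-1}.} Here $f$ is only bounded. I would use the $W^{2,p}$ (Calder\'on--Zygmund) estimate for equations with continuous coefficients, or equivalently the $C^{1+\gamma}$ estimate of Caffarelli \cite{CC} for viscosity solutions with $f\in L^\infty$ and H\"older coefficients. Taking $p>n$ large gives
\[
\|u\|_{W^{2,p}(B_{3/2})}\le C\big(\|u\|_{C^0(B_2)}+\|f\|_{L^p(B_2)}\big).
\]
Morrey's embedding with $1-n/p\ge\gamma$ then gives \eqref{prop:schauder-1}.

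\textbf{Main obstacle.} The hardest step is the absorption and interpolation bookkeeping needed to handle $\|\nabla^2u\|_{C^0}$ with only interior control. Since these are textbook results, in the paper I would simply cite \cite{CC} and Gilbarg--Trudinger rather than reproduce the argument.
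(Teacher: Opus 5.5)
Your proposal is correct and matches the paper's treatment: the paper gives no proof of this proposition and simply recalls it as the standard interior Schauder estimate. Your sketch is the textbook argument being invoked there, namely freezing coefficients with interpolation and absorption for the $C^{2+\beta}$ bound, and interior $W^{2,p}$ estimates plus Morrey's embedding with $p \ge n/(1-\gamma)$ for the $C^{1+\gamma}$ bound; citing \cite{CC} and Gilbarg--Trudinger, as you propose, is all that is needed.
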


We will need the standard interpolation inequality (see e.g. \cite{Tri}), which reads as follows for $0 < \beta < \gamma <1$:
\begin{equation}\label{eq:interpolation}
\|f\|_{C^{1+\beta}(B_1)} \leq C\|f\|_{C^{1+\gamma}(B_1)}^\theta\|f\|_{C^0(B_1)}^{1-\theta}, \quad \text{for }\, \theta = \frac{1+\beta}{1+\gamma}. 
\end{equation}

We also need the following simple lemma, which is a weak version of  the so called Danskin's theorem. 
\begin{lemma}
\label{lem:danskin}
Let $f(\cdot, \cdot): \R^{n}\times \R^n \to \R$ be a $C^1$-function, let $K \subset \R^n$ be a compact set and define
$F(x) = \min_{y\in K} f(x,y)$. At the points of differentiability of $F$ it holds 
\[
|\nabla F(x)| \leq |\nabla_x f(x,y_x)|, 
\]
for any $y_x \in K$ with  $f(x,y_x)= F(x)$. 
\end{lemma}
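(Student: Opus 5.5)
The plan is the standard envelope argument comparing $F$ with $f(\cdot,y_x)$. Fix a point $x$ where $F$ is differentiable and choose $y_x\in K$ with $f(x,y_x)=F(x)$; such a minimizer exists because $K$ is compact and $f(x,\cdot)$ is continuous. For every $z\in\R^n$ we have $F(z)\le f(z,y_x)$, since $y_x\in K$ is an admissible competitor. Equality holds at $z=x$. Hence the function
\[
g(z):=f(z,y_x)-F(z)
\]
is nonnegative and attains its minimum value $0$ at $z=x$.

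Both $f(\cdot,y_x)$ and $F$ are differentiable at $x$ (the first because $f\in C^1$, the second by assumption). So $g$ is differentiable at $x$ and has an interior minimum there, which forces $\nabla g(x)=0$. Concretely, for any unit vector $e$ and $s>0$, the one-sided difference quotients $\big(g(x+se)-g(x)\big)/s\ge 0$ and $\big(g(x-se)-g(x)\big)/s\ge0$ give $\nabla g(x)\cdot e\ge 0$ and $-\nabla g(x)\cdot e\ge0$ in the limit. We therefore get
\[
\nabla F(x)=\nabla_x f(x,y_x),
\]
and in particular the claimed inequality $|\nabla F(x)|\le|\nabla_x f(x,y_x)|$, with equality. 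This holds for every minimizer $y_x$, since the argument used nothing beyond $f(x,y_x)=F(x)$.

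There is no real obstacle here. The only points to check are that the minimizer exists (compactness of $K$) and that differentiability of $F$ at $x$ is assumed rather than proved. That matters because without it we would only obtain the one-sided (superdifferential) inequality coming from $F\le f(\cdot,y_x)$.
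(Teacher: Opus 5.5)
Your proof is correct and rests on the same idea as the paper: $f(\cdot,y_x)$ lies above $F$ and touches it at $x$, so you compare their difference quotients at $x$. The paper takes only the one-sided quotient in the direction $\nabla F(x)/|\nabla F(x)|$ to get the inequality, whereas your two-sided version gives the sharper identity $\nabla F(x)=\nabla_x f(x,y_x)$.
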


\begin{proof}
Note that $F$ is Lipschitz continuous and thus differentiable almost everywhere. Fix $x$ and $y_x \in K$ as in the statement. If $|\nabla F(x)| = 0$ the inequality is trivially true. Otherwise choose $\omega = \frac{\nabla F(x)}{|\nabla F(x)|}$. It holds for $\tau>0$
\[
\frac{F(x + \tau \omega)- F(x)}{\tau} \leq \frac{f(x+\tau \omega,y_x) - f(x,y_x)}{\tau}. 
\]
Letting $\tau \to 0$ yields $|\nabla F(x)| \leq \nabla_x f(x,y_x)\cdot \omega \leq | \nabla_x f(x,y_x)|$. 
\end{proof}

Here is the statement of the proposition. 

\begin{proposition}
\label{lem:final-reg}
Let $\{ E^h(t)\}_{t \geq 0}$ be an approximative flat flow in $\R^{n+1}$ with $n \leq 2$, fix $(x_0,t_0) \in \pa E^h(t_0)$, $t_0 \geq 1$, $C_2 \geq 1$ and  small $\alpha,  \delta_0 >0$.  There are   $r_0, \eps_0>0$ and $C_0 \geq 1$ such that assuming  \eqref{eq:density-point} if $n=2$ or    \eqref{eq:density-point-pl} if $n=1$  and \eqref{eq:flatt-assumption} for $C_0 \sqrt{h} \leq r \leq r_0$ for some $(A,\omega,c) \in  S^2  \times \mathbb{S}^2 \times  \R$ with  $|A|, |c|\leq C_2$, then  for all $t \in (t_0-\hat \rho^2,t_0]$, where  $\hat \rho = \hat C \sqrt{h}$ for some $\hat C\geq1$,  there is a function $u(\cdot, t) : B_{\hat \rho}^n \to \R$ with $\|u(\cdot, t)\|_{C^{2}(B_{\hat \rho}^n)} \leq C$ and an isometry $L : \R^{n+1} \to \R^{n+1}$ such that 
\[
L\big(\pa E^h(t)\big) \cap  C_{\hat \rho, r_1} =    \big{\{}  (x',u(x',t)) \in \R^{n+1} : x' \in B_{\hat \rho}^n\big{\}} \cap C_{\hat \rho, r_1}. 
\]
In particular, the second fundamental form  of $\pa E^h(t_0)$ at $x_0$ is bounded, i.e., $|B_{E^h(t_0)}(x_0)| \leq C$.  
\end{proposition}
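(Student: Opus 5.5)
Iterate Theorem \ref{thm:decay-flatness} from scale $r$ down to $\sim C_0\sqrt h$, then switch to elliptic estimates, since at scale $\sqrt h$ the problem looks elliptic after rescaling by $1/\sqrt h$ (Proposition \ref{prop:apriori-est} (i)).

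\textbf{Step 1: iterating the decay.} Set $r_j=\sigma^j r$. Apply Theorem \ref{thm:decay-flatness} at each base point $(x,t)$ with $x\in\pa E^h(t)$ near $x_0$ and $t\in(t_0-\hat\rho^2,t_0]$. This is legitimate because the hypothesis \eqref{eq:flatt-assumption} at $(x_0,t_0)$ and scale $r$ transfers, after recentering, to nearby points at scale $r/2$ with a slightly worse constant. The density assumption \eqref{eq:density-point} is also needed at the shifted time, and I will handle this by replacing $\eps_0$ with $2\eps_0$, since only shifts of size $\hat\rho^2\ll r^2$ are involved.

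The theorem gives parameters $(A_j,\omega_j,c_j)$ with
\[
|A_{j+1}-A_j|\le c_0 r_j^\alpha,\qquad |\omega_{j+1}-\omega_j|\le c_0r_j^{1+\alpha},\qquad |c_{j+1}-c_j|\le c_0r_j^{2+\alpha}.
\]
These increments are geometrically summable, so $|A_j|\le C_2+C$ stays bounded and the theorem can be reapplied with a fixed $C_1$. The iteration runs as long as $r_j\ge C_0\sqrt h$.

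Let $J$ be the last index, so $r_J\simeq C_0\sqrt h$. At that scale we have, for all discrete $t_k\in(t_0-r_J^2,t_0]$,
\[
\mathcal E_{A_J,\omega_J,c_J}(E^h(t_k);r_J,r_1)\le r_J^{2+\alpha}.
\]
So $\pa E^h(t_k)$ lies in a slab of width $2r_J^{2+\alpha}$ around the graph of $P_{A_J,c_J}+\Lambda$ over $\Pi_{\omega_J}$.

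\textbf{Step 2: blow-up to unit scale and elliptic estimates.} Rescale by $1/\sqrt h$. The set $\tilde E=(E^h(t)-x)/\sqrt h$ then has $|\mathrm H_{\tilde E}|\le C$ by Proposition \ref{prop:apriori-est} (i). It is a $(\Lambda,r)$-minimizer of perimeter, i.e.\ almost-minimal with bounded curvature, and it is trapped in a slab of width $C h^{(1+\alpha)/2}$ in $C_{C_0,1}$.

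By Allard-type (or De Giorgi) $\eps$-regularity for almost minimizers, a boundary trapped in a thin slab is a $C^{1,\beta}$ graph in $C_{C_0/2,1}$. Such a result applies since $\tilde E$ is a set minimizing a perturbed perimeter with bounded forcing term $d_{E}/h$, and the slab has no room for multiple sheets. Here the lower-density issue matters: Lemma \ref{lem:key-lemma}/Proposition \ref{peop:osc-decay} already forces single multiplicity, because $u_-$ and $u_+$ are $\eps$-close at scale $r_J$.

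Next, the graph function $u$ solves the mean curvature equation \eqref{eq:mean-curv} with bounded right-hand side $f=\lambda-d/h$ (in the rescaled variables). Writing it as $\trace(A(x)\nabla^2u)=f$ with $A$ depending on $\nabla u\in C^\beta$, the first estimate \eqref{prop:schauder-1} of Proposition \ref{prop:schauder} gives $C^{1+\gamma}$ bounds. Interpolating via \eqref{eq:interpolation} with the small $C^0$ oscillation gives small gradient.

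\textbf{Step 3: H\"older continuity of $f$ (the main obstacle).} To reach $C^2$ we need $f\in C^\beta$, i.e.\ H\"older continuity of $x\mapsto d_{E^h(t-h)}(x,u(x))$. I would argue inductively in time. The previous set $E^h(t_{k-1})$ is, by the same argument, a $C^{1+\gamma}$ graph $u(\cdot,t_{k-1})$ within distance $C\sqrt h$. The signed distance to a $C^{1+\gamma}$ graph with small gradient is $C^{1+\gamma}$ in a tubular neighbourhood of size $\gg$ the distance. Hence Lemma \ref{lem:danskin} controls its gradient, giving Lipschitz $f$ in rescaled variables.

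The second estimate \eqref{prop:schauder-2} then yields $\|u\|_{C^{2+\beta}}\le C$ in the rescaled ball. Scaling back, the second derivatives are not multiplied by any factor of $\sqrt h$: since $\nabla^2 u$ scales as $\sqrt h\,\nabla^2\tilde u$, we get $\|\nabla^2u\|\le C/\sqrt h\cdot$(rescaled flatness). The rescaled flatness is of order $\sqrt h\,|A_J|$ plus the excess, which yields $\|u(\cdot,t)\|_{C^2(B_{\hat\rho})}\le C$ with $\hat\rho=\hat C\sqrt h$.

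The isometry $L$ is the rigid motion sending $x_0$ to $0$ and $\omega_J$ to $e_{n+1}$. The bound $|B_{E^h(t_0)}(x_0)|\le C$ follows from the $C^2$ bound. The step I expect to be most delicate is the time-induction in Step 3, which needs the graph property at $t_{k-1}$ on a slightly larger ball. I would handle it by starting the induction at $t_0-r_J^2$ and shrinking the radius by a factor per step, which costs only a bounded number $\hat C^2$ of steps.
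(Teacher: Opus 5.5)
Your proposal has a genuine gap in Steps 2--3: you never get an $h$-independent bound on the mean curvature, and without one the final scaling step does not give $|\nabla^2 u|\le C$. Proposition \ref{prop:apriori-est} (i) only gives $|\mathrm{H}_{E^h(t)}|\le C/\sqrt h$. After your rescaling $\tilde u(y)=u(\sqrt h y)/\sqrt h$, the forcing term $\tilde f$ is therefore only $O(1)$. Since $\nabla^2 u(\sqrt h y)=h^{-1/2}\nabla^2\tilde u(y)$, you need $|\nabla^2\tilde u|\le C\sqrt h$. The estimate \eqref{prop:schauder-2} bounds this by $\|\tilde w\|_{C^0}+\|\tilde f\|_{C^\beta}$. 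The first term is $O(\sqrt h)$, but the second is $O(1)$ in your argument. Your line ``$\|\nabla^2u\|\le C/\sqrt h\cdot$(rescaled flatness)'' treats Schauder as homogeneous in the $C^0$ norm of the solution and drops the right-hand side. What you actually get is $|\nabla^2u|\le C/\sqrt h$, which is no better than scaling.

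The same defect affects Step 3. Interpolating against an $O(1)$ bound in $C^{1+\gamma}$ gives only $|\nabla u-\nabla P|\lesssim h^{c}$ for some $c<\frac12$. The Danskin bound then gives only $|\nabla d|\lesssim h^c$. That yields $|\mathrm{H}(x')-\mathrm{H}(y')|\lesssim h^{c-1}|x'-y'|$, which blows up at scale $\sqrt h$.

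The missing idea is how the shift $\Lambda$ is used at the last scale $\rho=C_0\sqrt h$.
\begin{itemize}
\item There $|P|\le Ch$ in $C_\rho$ and the excess is at most $Ch^{1+\alpha/2}$. So $\pa E^h(t)\cap C_\rho$ lies in the strip $|x_{n+1}-\Lambda(t)|\le Ch$, and likewise $\pa E^h(t-h)\cap C_\rho$ lies in $|x_{n+1}-\Lambda(t-h)|\le Ch$; see \eqref{eq:final-reg-2}.
\item Since $\Lambda(t)-\Lambda(t-h)=\lambda^h(t)h$, this gives $|d_{E^h(t-h)}-\lambda^h(t)h|\le Ch$ on $\pa E^h(t)\cap C_{\rho/2}$. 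The Euler--Lagrange equation then gives $|\mathrm{H}_{E^h(t)}|\le C$ uniformly in $h$, which is \eqref{eq:final-reg-31}.
\item With this, the rescaled forcing is $O(\sqrt h)$. Schauder plus interpolation gives $|\nabla u-\nabla P|\le Ch^{1/2+\alpha/8}$, and Danskin gives $|\nabla d|\le Ch^{1/2+\alpha/8}$.
\item That yields an $h$-independent $C^{\alpha/4}$ bound on $\mathrm{H}$ at scale $\sqrt h$. Only then does \eqref{prop:schauder-2}, applied to $(u-P-\Lambda)/\hat\rho^2$, give $|\nabla^2u|\le C$.
\end{itemize}

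Some secondary points:
\begin{itemize}
\item Your multiplicity-one justification is wrong as stated. A thin slab can contain two sheets. Proposition \ref{peop:osc-decay} only says $v_r^\pm$ are close at scales $\ge\max\{12\eps_0^{1/3},r^{\alpha/2}\}$, not that they coincide. The paper gets density near one by comparing with the volume-adjusted flat competitor $(E^h(t)\setminus C_{\rho/2})\cup(H_c\cap C_{\rho/2})$.
\item The signed distance to a $C^{1+\gamma}$ graph need not be $C^{1+\gamma}$ off the graph, since there is no reach bound. The Danskin argument does not need this, however.
\item Recentering in Step 1 is unnecessary. Theorem \ref{thm:decay-flatness} already covers all $t_k\in(t_0-\sigma^2r^2,t_0]$.
\end{itemize}
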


\begin{proof}
We may assume that $x_0= 0$ and denote  $C_r = C_{r, r_1} =  B_r^n \times (-r_1,r_1)$ and $ \rho = C_0\sqrt{h}$  for short.  By iterating the estimate from Theorem \ref{thm:decay-flatness}, we conclude that there is a caloric polynomial $P(y,t) = \frac12  Ay\cdot y   +bt$, $b = \text{Tr}(A)$,  with  $|A| \leq C$  and coordinate basis $e_i$ such that for the set
\[
\textbf{P}_t =  \big{\{}  (x', x_{n+1}) \in \R^{n+1} : x_{n+1}   < P(x',t) +\Lambda(t) \big{\}}, 
\]
where $\Lambda(\cdot)$ is defined in \eqref{def:Lambda},   it holds 
\begin{equation}
\label{eq:final-reg-1}
\sup_{ x\in  (E^h(t) \Delta \textbf{P}_t) \cap C_{\rho }}   |x_{n+1} - P(x',t-t_0) - \Lambda(t)| \leq C h^{1+\tfrac{\alpha}{2}} \quad \text{for all } \, t \in (t_0 -\rho^2, t_0]. 
\end{equation}
Here we have used the fact that $(0,t_0) \in \pa E^h(t_0)$ and $\Lambda(t_0) = 0$ in order to ignore the constant $c$ in the caloric polynomial. 
Since $|P(x',t)| \leq C h$ for all $|x'| \leq \rho$ and  $t \in (t_0 -\rho^2, t_0]$, \eqref{eq:final-reg-1} implies 
\begin{equation}
\label{eq:final-reg-2}
\pa E^h(t) \cap C_\rho \subset \{ (x',x_{n+1}) \in B_\rho^n \times\R : |x_{n+1} - \Lambda(t)| \leq Ch \}.  
\end{equation}
This information is crucial since it implies  that the boundary $\pa E^h(t) \cap C_\rho $ is trapped in a narrow strip at height  $\Lambda(t)$ of  width  $Ch$. This makes it easy to estimate the geometric distance between the sets $E^h(t-h)$ and $E^h(t)$.

\medskip

\textbf{Step 1:} \, Let us first show that the mean curvature $\mathrm{H}_{E^h(t)}$ is bounded in  $ C_{\rho/2}= B_{\rho/2}^n \times (-r_1,r_1)$. To this aim we recall the Euler-Lagrange equation \eqref{eg:Euler-Lag}, which  reads as 
\begin{equation}
\label{eq:final-reg-3}
\frac{d_{E^h(t-h)} }{h} = - \mathrm{H}_{E^h(t)} + \lambda^h(t)  \qquad \text{on }\, \partial E^h(t). 
\end{equation}
Let us fix $t = kh$. We use \eqref{eq:final-reg-2} and the fact that $\Lambda(t) - \Lambda(t-h) = \lambda^h(t)h$ to deduce that for every $x \in \pa E^h(t) \cap C_\rho$ and $y \in \pa E^h(t-h) \cap C_\rho$ we have
\[
|x_{n+1} - y_{n+1} - \lambda^h(t) h| \leq |x_{n+1} - \Lambda(t)| + |y_{n+1} - \Lambda(t-h)| \leq C h.
\]
From here it follows from an elementary argument that  for every  $x \in \pa E^h(t) \cap C_{\rho/2}$ it holds
\begin{equation}
\label{eq:final-reg-30}
| d_{E^h(t-h)}(x) - \lambda^h(t) h| \leq Ch. 
\end{equation}
Then from \eqref{eq:final-reg-3} we deduce 
\begin{equation}
\label{eq:final-reg-31}
|\mathrm{H}_{E^h(t)}(x)|\leq C   \qquad \text{for  }\, x \in  \pa E^h(t) \cap  C_{\rho/2}.
\end{equation}

We may use Allard regularity theory \cite[Theorem 5.2]{Sim} in order to deduce that the boundary  $\pa E^h(t)$ is $C^{1+\alpha}$ regular in $C_{\rho'}$ for $\rho' = C' \sqrt{h}$ and for  a small  $\alpha >0$. We only need to verify that the multiplicity is close to one, which we do next by using the minimality of $E^h(t)$. We choose $c \in \R$ such that the half-space  $H_c = \{ x \in \R^{n+1} :  x_{n+1} <c\}$ satisfies $| H_c \cap C_{\rho/2}| = |E^h(t) \cap C_{\rho/2} |$ and define $F = \big( E^h(t) \setminus C_{\rho/2}\big) \cup \big( H_c \cap C_{\rho/2} \big)$, i.e.,  the set $F$ agrees with $ E^h(t)$ outside the cylinder $C_{\rho/2}$ and is flat inside  $C_{\rho/2}$.   Note that by \eqref{eq:final-reg-2} it holds $|c -  \Lambda(t)| \leq Ch$.  Then $|F| = |E^h(t)|$ and  the minimality of $E^h(t)$ implies  
\begin{equation}
\label{eq:final-reg-32}
P(E^h(t)) \leq P(F) + \frac{1}{h} \left( \int_{F\setminus E^h(t)} d_{E^h(t-h)}\, dx - \int_{E^h(t)\setminus F} d_{E^h(t-h)}\, dx  \right) . 
\end{equation}
From \eqref{eq:final-reg-2}, \eqref{eq:final-reg-30} and from $|c -  \Lambda(t)| \leq Ch$ we deduce that it holds  $|d_{E^h(t-h)} - \lambda^h(t) h| \leq Ch$ in  $F \Delta E^h(t)$ . Since $|F| = |E^h(t)|$, we have $|F\setminus E^h(t)| = |E^h(t)\setminus F|$. Therefore 
\[
\begin{split}
&\int_{F\setminus E^h(t)} d_{E^h(t-h)}\, dx - \int_{E^h(t)\setminus F} d_{E^h(t-h)}\, dx  \\
&\,\,\,\,\,\,= \int_{F\setminus E^h(t)} (d_{E^h(t-h)} - \lambda^h(t) h) \, dx - \int_{E^h(t)\setminus F} (d_{E^h(t-h)} - \lambda^h(t) h)  \, dx  \leq Ch |F \Delta E^h(t)|.
\end{split}
\]
Combining this with  \eqref{eq:final-reg-32}  yields
\[
P(E^h(t)) \leq P(F)  + C  |F \Delta E^h(t)|.
\]
By construction it holds $|E^h(t) \Delta F| \leq C h\rho^n $ and $P(F)  \leq P(E^h(t); \R^{n+1} \setminus \bar C_{\rho/2}) + |B_{1}^n|\left(\frac{\rho}{2}\right)^n + Ch \rho^{n-1}$, where the last term is due to $\Ha^{n}\big( (H_c \Delta E^h(t))\cap \pa  C_{\rho/2} \big) \leq Ch \rho^{n-1}$, which in turn follows from \eqref{eq:final-reg-2} and $|c -  \Lambda(t)| \leq Ch$. Therefore 
\[
P(E^h(t); C_{\rho/2}) \leq |B_{1}^n|\left(\frac{\rho}{2}\right)^n + Ch \rho^{n-1}. 
\]
Since $\rho = C_0 \sqrt{h}$, this means that the multiplicity is close to one when $h$ is small, and we deduce that there are $\rho' = C' \sqrt{h}$  and function $u(\cdot, t) $ with $\|u(\cdot, t) \|_{C^{1+\alpha}(B_{2\rho'})} \leq C$ such that  
\[
 \pa E^h(t) \cap C_{2\rho'} = \{ \big(x', u(x',t)\big) : x' \in B_{2\rho'}^n \}.  
\]

In the planar case $n=1$, the above estimate and the curvature bound \eqref{eq:final-reg-31} imply the claim. In the case $n=2$ we need further estimates, as the bound on mean curvature does not immediately imply 
bound on the second fundamental form. From now on we assume $n=2$.  Note that we may  write the estimate \eqref{eq:final-reg-1} as
 \begin{equation}
\label{eq:final-reg-4}
 |u(x',t) - P(x',t-t_0) -\Lambda(t)| \leq C h^{1+\tfrac{\alpha}{2}} \qquad \text{for all } \,  x' \in B_{2\rho'}^2. 
\end{equation}

\medskip

\textbf{Step 2:} \, Let us  show that for all $t \in (t_0 - \rho'^2, t_0]$  the function $u(\cdot, t)$ in \eqref{eq:final-reg-4} satisfies 
 \begin{equation}
\label{eq:final-reg-41}
 |\nabla u(x',t) - \nabla P(x',t-t_0)| \leq C h^{\tfrac12+\tfrac{\alpha}{8}} \qquad \text{for all } \,  x' \in B_{\rho'}^2. 
\end{equation}

To this aim we define
\[
v_{\rho'}(x') := \frac{u(\rho' x', t) - P(\rho' x',t-t_0)  -\Lambda(t)}{\rho'}. 
\]
Since $u(\cdot, t)$ is uniformly $C^{1+\alpha}$-regular we have by \eqref{eq:final-reg-31} that $v_{\rho'}$  is a solution of a uniformly elliptic equation
\[
\text{Tr}\left(A(x)\nabla^2 v_{\rho'} \right) = \rho' f \qquad \text{in } \, B_2 
\] 
for $\|f\|_{C^0(B_{2})} \leq C$. Applying \eqref{prop:schauder-1} from Proposition \ref{prop:schauder} with $\gamma = 1-\alpha$ we have 
\[
\|v_{\rho'}\|_{C^{2-\alpha}(B_{1})} \leq C (\|v_{\rho'}\|_{C^0(B_2)} + \rho' \|f\|_{C^0(B_2)} ). 
\]
We have   $\rho' = C' \sqrt{h} \geq  \sqrt{h}$ and therefore \eqref{eq:final-reg-4} yields  $\|v_{\rho'}\|_{C^0(B_2)} \leq C h^{\tfrac12+\tfrac{\alpha}{2}}$.  Thus we have $\|v_{\rho'}\|_{C^{2-\alpha}(B_{2})}  \leq C \sqrt{h}$. 
We use the interpolation inequality \eqref{eq:interpolation} with $\beta = \alpha$ and $\gamma = 1-\alpha$ 
\[
\|v_{\rho'}\|_{C^{1+\alpha}(B_1)} \leq C\|v_{\rho'}\|_{C^{2-\alpha}(B_1)}^\theta\|v_{\rho'}\|_{C^0(B_1)}^{1-\theta} \leq C h^{\frac{\theta}{2}}  \, (h^{\tfrac12 + \tfrac{\alpha}{2}})^{1-\theta} = C h^{\tfrac12 + (1-\theta)\tfrac{\alpha}{2}},
\]
for  $\theta = \frac{1+\alpha}{2-\alpha}$. When $\alpha$ is small it holds $1-\theta \geq \frac{1}{4}$. Therefore we obtain 
\[
\|v_{\rho'}\|_{C^{1}(B_1)} \leq \|v_{\rho'}\|_{C^{1+\alpha}(B_1)}   \leq Ch^{\tfrac12+\tfrac{\alpha}{8}} . 
\]
The claim \eqref{eq:final-reg-41} follows from $\nabla v_{\rho'}(x') = \nabla u(\rho' x',t) - \nabla P(\rho 'x',t-t_0)$.

\medskip

\textbf{Step 3:} \, We show that the mean curvature of $E^h(t)$ is H\"older continuous. We define $d: B_{\rho'} \to \R$ as
 \begin{equation}
\label{eq:final-reg-60}
d(x') = d_{E^h(t-h)}\big(x',u(x',t)\big) 
\end{equation}
and recall that if $d(x') \geq 0$ then $d(x') = \min_{y' \in \R^2} f(x',y') $ for 
 \begin{equation}
\label{eq:final-reg-61}
f(x',y') =  \sqrt{|x'-y'|^2 + (u(x',t)- u(y',t-h))^2}. 
\end{equation}
 In particular,  $d$ is Lipschitz continuous and we claim that 
 \begin{equation}
\label{eq:final-reg-6}
|\nabla d(x') | \leq C h^{\frac12 +\frac{\alpha}{8}}  \quad \text{a.e. } \, x' \in B_{\rho'/2}.  
\end{equation}

Let us assume $d$ is differentiable at $x'$ and $d(x')\geq 0$, the case $d(x')<0$ being similar. If $\nabla d(x')  = 0$ then \eqref{eq:final-reg-6} is trivially true. We assume $|\nabla d(x')|>0$ in which case 
$d(x' + \tau \omega) \geq 0$ for small $\tau >0$ and $\omega = \frac{\nabla d(x')}{|\nabla d(x')|}$. We apply Lemma \ref{lem:danskin} to the  function in \eqref{eq:final-reg-61} and obtain 
\[
|\nabla d(x') | \leq \frac{|(x'-y') + \big(u(x',t)- u(y',t-h)\big)\nabla u(x',t)|}{ \sqrt{|x'-y'|^2 + (u(x',t)- u(y',t-h))^2}},
\]
where $y'$ is a point where the minimum in $d$ is attained. We use the minimality of $y'$ and differentiate the function  in  \eqref{eq:final-reg-61} with respect to $y'$ and have
 \begin{equation}
\label{eq:final-reg-611}
(x'-y') + \big(u(x',t)- u(y',t-h)\big)\nabla u(y',t-h) = 0. 
\end{equation}
Therefore we conclude 
\[
|\nabla d(x') | \leq \frac{|u(x',t)- u(y',t-h)| |\nabla u(x',t)- \nabla u(y',t-h)|}{ \sqrt{|x'-y'|^2 + (u(x',t)- u(y',t-h))^2}} \leq |\nabla u(x',t)- \nabla u(y',t-h)|. 
\]
Next we use \eqref{eq:final-reg-41} and the fact that  $\nabla P(x',t-t_0) = Ax'$ for all $t$ and obtain 
 \begin{equation}
\label{eq:final-reg-62}
\begin{split}
|\nabla d(x') | &\leq  |\nabla u(x',t)- \nabla u(y',t-h)| \\
&\leq  |\nabla u(x',t)- Ax' |  +|Ax' - Ay'| +  |\nabla u(y',t-h)- Ay'| \\
&\leq   C h^{\tfrac12+\tfrac{\alpha}{8}} + C|x'-y'|. 
\end{split}
\end{equation}

In order to estimate the distance $x'-y'$ we use \eqref{eq:final-reg-611} and have  $|x'-y'| \leq  |u(x',t)- u(y',t-h)||\nabla u(y',t-h)|$. We use first \eqref{eq:final-reg-4}, together with $|\Lambda(t)- \Lambda(t-h)| = |\lambda^h(t)| h \leq C \sqrt{h}$, where the inequality follows from Proposition \ref{prop:apriori-est},  and $|P(x',t-t_0) |\leq C h $, and have 
\[
\begin{split}
 &|u(x',t)- u(y',t-h)| \\
&\leq  |u(x',t) - P(x',t-t_0) -\Lambda(t)|  +   |u(y',t-h) - P(y',t-t_0-h) -\Lambda(t-h)| + C \sqrt{h} \\
&\leq C \sqrt{h}.   
\end{split}
\]
Then we use \eqref{eq:final-reg-41} and $| \nabla P(y',t-h-t_0) | = |A x'|\leq C \sqrt{h}$ to deduce $|\nabla u(y',t-h)| \leq \sqrt{h}$. Combining the  previous estimates yield $|x'-y'| \leq C h$ and the claim \eqref{eq:final-reg-6} follows from \eqref{eq:final-reg-62}.

We   prove that the function $x' \mapsto \mathrm{H}_{E^h(t)}\big(x', u(x',t)\big)$ is H\"older continuous and  claim that  
 \begin{equation}
\label{eq:final-reg-5}
|\mathrm{H}_{E^h(t)}\big(x', u(x',t)\big)- \mathrm{H}_{E^h(t)}\big(y', u(y',t)\big)| \leq C |x' -y'|^{\frac{\alpha}{4}} \quad \text{for all } \, x', y' \in   B_{\rho'/2}. 
\end{equation}
Indeed, we differentiate the Euler-Lagrange equation \eqref{eq:final-reg-3}, use the notation \eqref{eq:final-reg-60} and the inequality \eqref{eq:final-reg-6} and  have for all $x',y' \in  B_{\rho'/2}$ (recall that $\rho' = C' \sqrt{h}$)
\[
\begin{split}
\mathrm{H}_{E^h(t)}\big(x', u(x',t)\big) &- \mathrm{H}_{E^h(t)}\big(y', u(y',t)\big)| = -  \frac{1}{h} \int_0^1 \nabla d(\tau x' + (1-\tau)y') \cdot  (x'-y') \, d \tau  \\
&\leq \frac{|x'-y'|}{h} \int_0^1 |\nabla d(\tau x' + (1-\tau)y')| \, d \tau \\
& \leq C |x'-y'| \,  h^{-\frac12 +\frac{\alpha}{8}} \\
&= C |x'-y'|^{\frac{\alpha}{4}}  \,   \big( \frac{|x' -y'|}{\sqrt{h}}\big)^{1 -\frac{\alpha}{4}} \leq C |x'-y'|^{\frac{\alpha}{4}} . 
\end{split}
\]
Hence, we have \eqref{eq:final-reg-5}. 

\medskip

\textbf{Step 4:} \, We have thus proved that the mean curvature is  H\"older continuous on  $\pa E^h(t) \cap C_{\rho'/2}$. Therefore the boundary can be written as a graph of a function $u(\cdot, t)$ which is a solution of a uniformly elliptic equation
\[
\text{Tr}\left(\tilde A(x)\nabla^2 u(\cdot, t) \right) = \tilde f \qquad \text{in } \, B_{\rho'/2} 
\]
with $\|\tilde f\|_{C^{\frac{\alpha}{4}}(B_{\rho'/2})}\leq C$.  We define for $\hat \rho = \rho'/4$
\[
w_{\hat \rho}(x') := \frac{u(\hat \rho x', t) - P(\hat \rho x',t-t_0)- \Lambda(t)}{\hat \rho^2} 
\]
and deduce that it is a solution of 
\[
\text{Tr}\left(\hat A(x)\nabla^2 w_{\hat \rho} \right) = \hat  f \qquad \text{in } \, B_{2},  
\]
where $\hat A$ is uniformly elliptic and H\"older continuous and by \eqref{eq:final-reg-5} it holds $\|\hat  f\|_{C^{\frac{\alpha}{4}}(B_2)} \leq C$. The estimate \eqref{prop:schauder-2} from Proposition \ref{prop:schauder} implies 
\[
\|w_{\hat \rho}\|_{C^{2 + \alpha/4}(B_1) } \leq C \big( \|w_{\hat \rho}\|_{C^{0}(B_2) }  + \|\hat f \|_{C^{\alpha/4}(B_2) } \big). 
\]
Using \eqref{eq:final-reg-4} and $\hat \rho = \hat C \sqrt{h}  \geq \sqrt{h}$ we have $ \|w_{\hat \rho}\|_{C^{0}(B_2)} \leq C h^{\frac{\alpha}{2}}$. This  and $\|\hat  f\|_{C^{\frac{\alpha}{4}}(B_2)} \leq C$ imply 
\[
\|w_{\hat \rho}\|_{C^{2}(B_1) } \leq \|w_{\hat \rho}\|_{C^{2 + \alpha/4}(B_1) } \leq C .
\]
Since  $\nabla^2 w_{\hat \rho} (x') = \nabla^2 u(\hat \rho x',t) - \nabla^2 P(\hat \rho x',t-t_0) = \nabla^2 u(\hat \rho x',t) - A$ for every $x' \in B_1$ , we finally conclude
\[
\|\nabla^2 u(\cdot ,t)\|_{C^2(B_{\hat \rho})} \leq C   
\]
and the claim follows. 
\end{proof}

We remark that with a little more work one may prove that the function $u$ in the statement of Proposition \ref{lem:final-reg} is in fact $C^{2,\gamma}$-regular for $\gamma = \frac{\alpha}{4}$. 

\subsection{Proof of the main theorem}

\begin{proof}[\textbf{Proof of Theorem \ref{mainthm}}]
Let $\{E(t)\}_{t \geq 0}$ be  volume-preserving flat flow as in the statement  and  let  $\{E^{h_n}(t)\}_{t \geq 0}$ be the associated  approximate  flat flow which converges to it. We simplify the notation by $E^{h}(t) =E^{h_n}(t)$.
By scaling we may assume that $|E^h(t)| = |E_0| = |B_1|$ for all $t>0$. By Proposition \ref{prop:JuMoOrSpa}  there is $\hat x \in \R^3$ such that 
\begin{equation}\label{eq:pf-mainthm-1}
\sup_{x \in E^{h}(t) \Delta B_1(\hat x)} \text{dist}(x, \partial B_1(\hat x))  + \big|P(E^{h}(t)) - P(B_1) \big|  \leq C e^{-c_1t} 
\end{equation}
and by translation we may assume that $\hat x = 0$. For any $T$ large we define  $\Gamma_T$  as in \eqref{def:gammaT} and $\Sigma_{T}$ as  in \eqref{def:sigmaT}.  Our goal is to show that there is $T_0$ large enough such that for every $t \in [T_0+1,\infty) \setminus \Sigma_{T_0}$ the second fundamental form is uniformly bounded $\|B_{E^h(t)}\|_{C^0} \leq C$.

Let $\eps_0>0$ be as in Theorem \ref{thm:decay-flatness}. As we discussed in Section 3, we may define the set $\Sigma_{T_0} \subset [T_0,\infty)$  as in \eqref{def:sigmaT} such that for all  $t_0 \in [T_0+1,\infty) \setminus \Sigma_{T_0}$  it holds \eqref{eq:density-point-pl} if $n=1$ or  \eqref{eq:density-sigmaT} if $n=2$, i..e, 
\begin{equation}\label{eq:pf-mainthm-2}
\inf_{r \in (0,1)} \frac{1}{r^2} \Big|\{ t \in [t_0-r^2,t_0] : \frac14 \| \mathrm{H}_{E^{h}(t) }\|_{L^2}^2 \leq 5 \pi  \} \Big| \geq 1 - \eps_0,
\end{equation}
 and by \eqref{eq:measure-sigmaT} $|\Sigma_{T_0}| \leq C e^{-\frac{c_1}{4}T_0}$. Let us  fix  $t_0 \in [T_0+1,\infty) \setminus \Sigma_{T_0}$.

We show that   the flatness assumption \eqref{eq:flatt-assumption} holds  for $r=r_0$ at every point $x_0 \in \partial E^h(t_0)$, when $T_0$ is large enough.  Let us fix $r_0$  and  $x_0 \in \partial E^{h}(t_0)$. By \eqref{eq:pf-mainthm-1}, with $\hat x = 0$,  we may by rotation assume that $x_0 = |x_0| e_{n+1}$ and  $\big| |x_0| -1\big| \leq C e^{-c_1t_0}$. 
We choose  caloric polynomial $P : \R^n \times \R \to \R$ as 
\[
P(x',t) = 1- \frac{1}{2}|x'|^2 - n(t - t_0). 
\]
It holds for $0 <r_0 < \frac14$
\begin{equation}\label{eq:pf-mainthm-3}
\sup_{|x'|<r_0}\big| \sqrt{1 - |x'|^2} -  (1- \frac{|x'|^2}{2} ) \big| \leq r_0^4. 
\end{equation}
Let $\Lambda(\cdot)$ be as in \eqref{def:Lambda}, i.e.,  $\Lambda(t) := \int_{t_0}^t \lambda^h(\tau+h) \, d \tau$. We claim that there is $c_3>0$ such that 
\begin{equation}\label{eq:pf-mainthm-4}
 |\Lambda(t) - n(t_0-t)| \leq Ce^{-c_3 T_0}
\end{equation} 
for all $t \in [t_0-1, t_0]$. 

Recalling the definition of $\Gamma_T$  in \eqref{def:gammaT} and using \eqref{def:gammaT-3} if $n=2$ and  \eqref{eq:gammaT-pl} if $n=1$, we have that $|\lambda^h(t) -n| \leq C e^{-\frac{c_1q}{2} T}$ for all $t \in [T,\infty) \setminus \Gamma_T$. On the other hand it holds $|\Gamma_T| \leq C e^{-\frac{c_1}{2} T}$. Therefore using Proposition \ref{prop:apriori-est} (iii)  we have 
\[
\begin{split}
 |\Lambda(t) - n(t_0-t)| &\leq \int_{t}^{t_0} |\lambda^h(\tau+h) -n|\, d \tau \\
&\leq  \int_{[t,t_0+h] \cap \Gamma_{T_0}} |\lambda^h(\tau) -n|\, d \tau + \int_{[t,t_0+h] \setminus  \Gamma_{T_0}} |\lambda^h(\tau) -n|\, d \tau \\
&\leq C |\Gamma_{T_0} |^{\frac12} + C  e^{-\frac{c_1q}{2} T_0}  \leq C e^{-c_3 T_0}.
\end{split}
\]
Hence, we have \eqref{eq:pf-mainthm-4}.

We use \eqref{eq:pf-mainthm-3} and \eqref{eq:pf-mainthm-4} to  conclude
\[
\begin{split}
\sup_{|x'|<r_0}\big| \sqrt{1 - |x'|^2} -  P(x',t) - \Lambda(t) \big| &\leq \sup_{|x'|<r_0}\big| \sqrt{1 - |x'|^2} -  (1- \frac{|x'|^2}{2} ) \big| +  |\Lambda(t) - n(t_0-t)|  \\
&\leq r_0^4 + C e^{-c_3 T_0}
\end{split}
\]
for every $t \in [t_0-r_0^2, t_0]$.  We define the set $\textbf{P}_t := \big{\{}  (x',x_{n+1}) \in \R^{n+1} :  x_{n+1} < P(x',t)  + \Lambda(t) \big{\}}$.  Then by \eqref{eq:pf-mainthm-1}, by $x_0 = |x_0|e_{n+1}$,  and by the above it holds 
\[
 \sup_{ x \in \big(E^h(t)   \Delta \textbf{P}_t\big)  \cap B_{r_0}(e_{n+1})}  \big| x_{n+1} - P(x',t) - \Lambda(t)   \big| \leq  r_0^4 + C e^{-c_3 T_0}+ C e^{-c_1 T_0} \leq r_0^{2+\alpha} 
\]
for all  $t \in [t_0-r_0^2, t_0]$, when $r_0$ is small enough and $T_0$ large enough. Hence, we have the assumption \eqref{eq:flatt-assumption}. We conclude by Proposition \ref{lem:final-reg} that the second fundamental form is bounded, i.e., 
$|B_{E^h(t_0)}(x_0)| \leq C$. Since this holds at every point we have  $\|B_{E^h(t_0)}\|_{C^0} \leq C$ for all $t_0 \in [T_0+1,\infty) \setminus \Sigma_{T_0}$.

We use the uniform bound on the second fundamental form and \eqref{eq:pf-mainthm-1} to conclude, for instance by using the Allard regularity theory \cite[Theorem 5.2]{Sim},  that for all  $t \in [T_0+1,\infty) \setminus \Sigma_{T_0}$ the set  $E^h(t) $ is nearly spherical, i.e.,  we may write 
\[
\partial E^h(t) = \{ (1+u(x,t))x : x \in \mathbb{S}^n \}, \qquad \|u(\cdot, t)\|_{C^2(\mathbb{S}^n)} \leq C
\]
and $\|u(\cdot, t)\|_{C^1(\mathbb{S}^n)} \leq C e^{-ct}$. In particular, we have that the set  $E^h(t)$ satisfies uniform ball condition with radius $\hat r>0$.  Then by Proposition \ref{prop:JN2} there  is $\hat \delta>0$ such that 
all sets $E^h(\tau)$ for  $\tau \in [t,t+\hat \delta]$ satisfy  uniform ball condition with radius $\hat r/2$. Since by \eqref{eq:measure-sigmaT} we have $|\Sigma_{T_0}| \leq C e^{- \frac{c_1}{4} T_0} \leq \frac{\hat \delta}{2}$, when $T_0$ is large enough, 
we conclude that the second fundamental form stays uniformly bounded, i.e.,   
\[
\sup_{t \geq T_0+1}   \|B_{E^h(t)}\|_{C^0} \leq C   \qquad \text{for all }\, t \geq T_0+1. 
\]
Using the  regularity estimate from Proposition \ref{prop:JN2}, we conclude that the sets   $E^h(t)$ are uniformly $C^k$ regular for all $t \geq T_0+2$ and all $k \geq 2$, i.e., 
\[
\partial E^h(t) = \{ (1+u(x,t))x : x \in \mathbb{S}^n \}  \qquad \text{for all }\, t \geq T_0+2
\]
for a function $u(\cdot, t) : \mathbb{S}^n \to \R$ with $\|u(\cdot, t)\|_{C^1(\mathbb{S}^n)} \leq \eps_0$ and $\|u(\cdot, t)\|_{C^k(\mathbb{S}^n)} \leq C_k$ for all $k \geq 2$. Since these estimate are independent of $h$, they hold at the limit as $h \to 0$. This means that the limiting flat flow  is smooth in $[T_0+2,\infty)$. The exponential convergence follows from the above  regularity estimates and from   \eqref{eq:pf-mainthm-1}  via interpolation. This concludes the proof. 
\end{proof}

\section*{Acknowledgments}
This reasearch was funded by  the Academy of Finland grant  347550. 
S. J. was also supported by the National Research Foundation of Korea(NRF) grant funded by the Korea government(MSIT) (RS-2025-24803159 and RS-2026-25474502).
V.J. would like to thank Michael Goldman for useful discussion on the article \cite{KS01}.

\end{document}